\definecolor{darkred}{rgb}{0.5,0,0}
\definecolor{darkgreen}{rgb}{0,0.5,0}
\definecolor{darkblue}{rgb}{0,0,0.5}
\numberwithin{equation}{section}
\setlist{nosep}
\setlist{noitemsep}
\setlist{leftmargin=*}
\DeclarePairedDelimiter\floor{\lfloor}{\rfloor}
\newtheorem{theorem}{Theorem}
\newtheorem{proposition}{Proposition}[section]
\newtheorem{lemma}[proposition]{Lemma}
\newtheorem{corollary}[proposition]{Corollary}
\newtheorem{claim}[proposition]{Claim}
\newtheorem{remark}[proposition]{Remark}
\theoremstyle{definition}
\newtheorem{definition}[proposition]{Definition}
\newcommand{\g}{\mathrm{g}}
\newcommand{\R}{\mathbb{R}}
\renewcommand{\div}{\mathrm{div}}
\renewcommand{\det}{\mathrm{det}}
\newcommand{\mm}{\mathbf{m}}
\newcommand{\tmm}{\widetilde{\mm}}
\newcommand{\mms}{\mm_s}
\newcommand{\tmms}{\widetilde{\mm}_s}
\renewcommand{\rm}{\mathrm{m}}
\newcommand{\trm}{\widetilde{\rm}}
\newcommand{\rms}{\rm_s}
\newcommand{\trms}{\widetilde{\rm}_s}
\newcommand{\hmu}{\mathfrak{h}_0}
\newcommand{\fze}{\varphi_{z, \epsilon}}
\newcommand{\KNbeta}{\mathrm{K}_{N}^{\beta}}
\newcommand{\id}{\mathrm{Id}}
\newcommand{\Dd}{\mathsf{D}}
\renewcommand{\phi}{\varphi}
\renewcommand{\epsilon}{\varepsilon}
\newcommand{\D}{\mathrm{D}}
\newcommand{\1}{\mathsf{1}}
\newcommand{\2}{\mathsf{2}}
\newcommand{\hal}{\frac{1}{2}}
\newcommand{\XN}{\mathrm{X}_N}
\newcommand{\Esp}{\mathbb{E}}
\renewcommand{\P}{\mathbb{P}}
\newcommand{\F}{\mathsf{F}}
\newcommand{\Pot}{\mathrm{Pot}_N}
\newcommand{\kk}{\mathsf{k}}
\newcommand{\Ani}{\mathsf{Ani}}
\newcommand{\rr}{\mathsf{r}}
\newcommand{\LN}{\mathbf{L}_N}
\newcommand{\bXN}{\mathbf{X}_N}
\renewcommand{\d}{\mathrm{d}}
\newcommand{\diagc}{\left(\R^2 \times \R^2\right) \setminus \triangle}
\newcommand{\logz}{\log_z}
\newcommand{\Z}{\mathbb{Z}}
\newcommand{\0}{\mathsf{0}}
\newcommand{\Latt}{\Lambda_\delta}
\newcommand{\Ddr}{\Dd_{r}}
\newcommand{\TDOCP}{\textrm{2DOCP}}
\newcommand{\dist}{\mathrm{dist}}
\newcommand{\EE}{\mathcal{E}}
\newcommand{\DD}{\mathrm{D}}
\newcommand{\Cc}{\mathtt{C}}
\newcommand{\EnerPts}{\mathsf{EnerPts}}
\newcommand{\Pbeta}{\mathbb{P}^\beta}
\newcommand{\PNbetamu}{\Pbeta_{N, \mm}}
\renewcommand{\AA}{\mathsf{A}_{\mathsf{1}}}
\newcommand{\PNbeta}{\Pbeta_N}
\newcommand{\ZNbeta}{\mathrm{Z}_{N, \beta}}
\newcommand{\LNmu}{\LN^{\mm}}
\newcommand{\ellA}{\ell_a}
\newcommand{\ellB}{\ell_b}
\newcommand{\fA}{f_a}
\newcommand{\fB}{f_b}
\newcommand{\zA}{z_a}
\newcommand{\zB}{z_b}
\newcommand{\sstar}{s_\star}
\newcommand{\PNep}{\mathrm{Pot}_{N, \epsilon}}
\newcommand{\elld}{\ell_\delta}
\newcommand{\logzp}{\log_{z'}}
\newcommand{\rhoep}{\rho_\epsilon}
\newcommand{\tlog}{\widetilde{\log}}
\newcommand{\Gzz}{G^{z,z'}}
\newcommand{\trhob}{\widetilde{\rho}_\beta}
\newcommand{\tG}{\widetilde{G}^{z,z'}}
\newcommand{\rmsi}{\rms^{(i)}}
\newcommand{\mmsi}{\mms^{(i)}}
\newcommand{\rmsip}{\rms^{(i+1)}}
\newcommand{\mmsip}{\mms^{(i+1)}}
\newcommand{\fii}{f^{(i)}}
\newcommand{\Lell}{\mathcal{L}_{z,\ell}}
\newcommand{\N}{\mathbb{N}}
\newcommand{\HN}{\mathsf{H}_N}
\newcommand{\corO}{}
\newcommand{\corT}{}
\newcommand{\corG}{}
\author{Gaultier Lambert}
\address[Gaultier Lambert]{KTH Royal Institute of technology, Matematik, Lindstedtsv\"agen 25, 11428 Stockholm}
\thanks{G.L. acknowledges the supports of the Ambizione grant S-71114-05-01 from the Swiss National Science Foundation and of the starting grant 2022-04882 from the  Swedish Research Council.}
\email{glambert@kth.se}
\author{Thomas Lebl\'{e}}
\address[Thomas Lebl\'{e}]{Universit\'{e} de Paris-Cit\'{e}, CNRS, MAP5 UMR 8145, F-75006 Paris, France.}
\thanks{T.L. acknowledges the support of JCJC grant ANR-21-CE40-0009 from Agence Nationale de la Recherche.}
\email{thomas.leble@math.cnrs.fr}
\author{Ofer Zeitouni}
\address[Ofer Zeitouni]{Department of Mathematics, Weizmann Institute of Science, Rehovot 76100, Israel.}
\thanks{The third author was partially supported by Israel Science Foundation grant number 421/20.}
\email{ofer.zeitouni@weizmann.ac.il}
\email{}
\title[LLN for the maximum of the 2D Coulomb gas potential]{Law of large numbers for the maximum of the two-dimensional Coulomb gas potential}
\begin{document}
\date{}

\begin{abstract}
We derive the leading order asymptotics of the logarithmic potential
of a two dimensional Coulomb gas at arbitrary positive temperature. The proof is based on precise evaluation
of exponential moments, and the theory of Gaussian multiplicative chaos.
\end{abstract}
\maketitle

\section{Introduction}

\subsection{Setting and main result}
We are interested in proving a law of large numbers for the maximal value of the random electrostatic (or logarithmic) potential generated by the particles of a two-dimensional Coulomb gas - sometimes also called a 2d log-gas, or ``two-dimensional, one-component plasma'' (\TDOCP).

\subsubsection*{The \TDOCP}
\corT{Let $N \geq 1$ and let $\XN := (x_1, \dots, x_N)$ be a $N$-tuple of (distinct) points in $\R^2$ and let
\begin{equation}
\label{eqHN}
\HN(\XN) := \hal \sum_{1 \leq i \neq j \leq N} - \log |x_i - x_j| + N \sum_{i=1}^N \frac{|x_i|^2}{2},
\end{equation}
be the “energy” of $\XN$, given by the sum of all the pairwise \emph{logarithmic interactions} between points plus the effect of the so-called \emph{quadratic confining potential} $x \mapsto \frac{|x|^2}{2}$ on each particle. \\ Let $\beta>0$ be fixed. We will work with the probability measure $\PNbeta$ on $\left(\R^2\right)^N$ whose density is defined as:
\begin{equation}
\label{eq:Pnbetav2}
\d\PNbeta(\XN) := \frac{1}{\ZNbeta} \exp\left( - \beta \left( \HN(\XN) \right) \right) \d\XN,
\end{equation}
where $\d\XN := \d x_1 \dots \d x_N$ is the Lebesgue measure on $(\R^2)^N$ and $\ZNbeta$ is the normalizing constant, or \emph{partition function}, namely the following integral:
\begin{equation*}
\int_{\left(\R^2\right)^N} \exp\left( - \beta \left( \HN(\XN) \right) \right) \d\XN.
\end{equation*}
The measure $\PNbeta$ is the \textit{canonical Gibbs measure} of a \TDOCP \ at \textit{inverse temperature} $\beta$ and with quadratic confinement, cf. e.g. \cite[Chap.~15]{Forrester}.}

\vspace{0.2cm}

Henceforth, we fix an arbitrary value of $\beta > 0$ and $N \geq 2$. We let $\XN$ be a random variable in $(\R^2)^N$ distributed according to the Gibbs measure $\PNbeta$, and let $\bXN$ be the associated random point measure on $\R^2$ as defined above. Unless specified otherwise, expectations in what follows are taken under $\PNbeta$.

\subsubsection*{Main result}
In the sequel, $\Dd(z,r)$ denotes the closed disk of center $z$ and radius $r$ in $\R^2$, we set $\Dd_r=\Dd(0,r)$ and $\Dd=\Dd_1$.

\corT{Let $\mm_0$ be the probability measure with uniform density, denoted $\rm_0$, on the unit disk $\Dd$. As we recall below in Section \ref{sec:rewriting}, under $\PNbeta$ the points $(x_1, \dots, x_N)$ tend to arrange themselves at the macroscopic level according to the so-called \emph{equilibrium (or background) measure} $\mm_0$.}

Define the \emph{Coulomb gas potential} generated by $\XN = (x_1, \dots, x_N)$ (and the background measure) as the following random \corG{real-valued} field on $\R^2$:
\begin{equation}
\label{def:PotXN}
\Pot : z \mapsto \sum_{i=1}^N \log |z-x_i| - N \int_{\Dd} \log |z -x| \d \mm_0(x).
\end{equation}
In physical terms, $\Pot(z)$ corresponds to the value at $z$ of the electrostatic potential generated by the system of charges and the background measure $\mm_0$. Obviously $\Pot(z)$ is equal to $-\infty$ whenever $z$ coincides with one of the point charges. Our main result is the following description of the \textit{maximum} of $\Pot$ over closed disks in the interior of~$\Dd$:

\begin{theorem}[LLN for the max of the 2D Coulomb gas potential]
\label{theo:main}
For all $r \in (0,1)$ we have:
\begin{equation}
\frac{1}{\log N} \max_{z \in  \Dd(0,r)} \Pot(z) \longrightarrow \frac{1}{\sqrt{\beta}}\text{ as } {N \to \infty}, \text{ in probability.}
\end{equation}
\end{theorem}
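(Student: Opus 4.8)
The plan is to establish, for every $\delta>0$, the matching bounds $\P(\max_{z\in\Dd(0,r)}\Pot(z)\geq(1+\delta)\tfrac{\log N}{\sqrt\beta})\to0$ and $\P(\max_{z\in\Dd(0,r)}\Pot(z)\leq(1-\delta)\tfrac{\log N}{\sqrt\beta})\to0$, both driven by sharp asymptotics for the exponential moments of $\Pot$. The guiding heuristic is that on a fixed compact subset of the open unit disk, $\Pot$ behaves like the log-correlated Gaussian field of covariance $-\tfrac1\beta\log|z-z'|+O(1)$, cut off at the inter-particle scale $\epsilon_N:=N^{-1/2}$, where its variance is $\tfrac1{2\beta}\log N+O(1)$; thus $\Pot\approx\tfrac1{\sqrt\beta}X$ for a standard $2$d log-correlated field $X$ mollified at depth $t=\tfrac12\log N$, whose maximum over a macroscopic set is $\sim2t=\log N$. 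The main analytic input, which I will call the precise exponential moment estimate, is that for $k\geq1$ fixed and $z_1,\dots,z_k$ in a fixed compact $K\subset\mathring{\Dd}$ with pairwise distances $\geq\epsilon_N$,
\begin{equation*}
\Esp\Bigl[\exp\Bigl(s\sum_{j=1}^{k}\Pot(z_j)\Bigr)\Bigr]=N^{ks^2/(4\beta)}\prod_{1\leq i<j\leq k}|z_i-z_j|^{-s^2/\beta}\;e^{o(\log N)}
\end{equation*}
for $s\in[0,s_\star)$ with $s_\star:=2\sqrt\beta$, the $o(\log N)$ being uniform in the $z_j$; equivalently, the cumulants of $\sum_j\Pot(z_j)$ beyond the second are $o(\log N)$. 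I would prove this by comparing $\KNbeta$ with the partition function of the \TDOCP\ augmented by frozen fractional charges $s/\beta$ at $z_1,\dots,z_k$ together with a compensating background, via the energy-splitting identity of \cite[Sec.~2]{sandier20152d}: the explicit energy terms produce the displayed factor, while the ratio of partition functions is $e^{o(\log N)}$ because the perturbation is localized and $O(k)$-small relative to $N$.

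For the upper bound I would pass to $\Pot^{(\epsilon_N)}(z):=\frac{1}{\pi\epsilon_N^2}\int_{\Dd(z,\epsilon_N)}\Pot(w)\,\d w$, the solid average of $\Pot$. Since $z\mapsto\sum_i\log|z-x_i|$ is subharmonic and $u_{\mm_0}(w):=\int_{\Dd}\log|w-x|\,\d\mm_0(x)=\tfrac12(|w|^2-1)$ on $\Dd$, the sub-mean-value property gives $\Pot\leq\Pot^{(\epsilon_N)}+\tfrac14$ on the bulk (and trivially $\Pot^{(\epsilon_N)}(z)\leq\max_{\Dd(0,r)}\Pot$ for $z\in\Dd(0,r-\epsilon_N)$). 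On a high-probability event controlling the local discrepancies $\#\{i:x_i\in\Dd(w,t)\}-N\mm_0(\Dd(w,t))$ via standard energy estimates, the increments of $\Pot^{(\epsilon_N)}$ over distances $\epsilon_N$ have $O(1)$ fluctuations — crucially because the background field $-N\nabla u_{\mm_0}(z)=-Nz$ is cancelled to leading order by the electric field of the near-equilibrium particles — so $\max_{\Dd(0,r)}\Pot\leq\max_{z\in\mathcal N}\Pot^{(\epsilon_N)}(z)+o(\log N)$ for a net $\mathcal N$ of mesh $\epsilon_N$, with $|\mathcal N|\lesssim N$. A union bound gives $\P(\max\geq m)\leq|\mathcal N|\,\sup_z\Esp[e^{s\Pot^{(\epsilon_N)}(z)}]\,e^{-sm}$, and by Jensen and the $k=1$ estimate $\Esp[e^{s\Pot^{(\epsilon_N)}(z)}]\leq\frac{1}{\pi\epsilon_N^2}\int_{\Dd(z,\epsilon_N)}\Esp[e^{s\Pot(w)}]\,\d w\leq N^{s^2/(4\beta)+o(1)}$; taking $s$ close to $s_\star$, for which $s^2/(4\beta)\to1$, yields $\P(\max\geq m)\leq N^{2+o(1)}e^{-sm}\to0$ for $m=(1+\delta)\tfrac{\log N}{\sqrt\beta}$, since $\tfrac1s+\tfrac{s}{4\beta}$ is minimized to $\tfrac1{\sqrt\beta}$ at $s=s_\star$.

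For the lower bound I would study $I_s:=\int_{\Dd(0,r)}e^{s\Pot(z)}\,\d z$ for $s<s_\star$ and $\gamma:=s/\sqrt\beta$. The $k=1,2$ estimates give $\Esp[I_s]=N^{s^2/(4\beta)+o(1)}$ and, when $\gamma<\sqrt2$, $\Esp[I_s^2]=N^{s^2/(2\beta)}\bigl(\iint_{\Dd(0,r)^2}|z-z'|^{-s^2/\beta}\,\d z\,\d z'\bigr)e^{o(\log N)}$, so a second-moment argument makes $N^{-s^2/(4\beta)}I_s$ bounded below in probability; for $\gamma\in[\sqrt2,2)$ one uses the two-source estimate together with a Kahane-type truncation removing the points where the field is atypically large at some intermediate scale, after which the theory of subcritical Gaussian multiplicative chaos identifies the normalized truncated integral with the total mass of a GMC of parameter $\gamma$ — a.s. positive for all $\gamma<2$ — and one still obtains $I_s\geq N^{s^2/(4\beta)-o(1)}$ with probability tending to $1$ for every $s<s_\star$. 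On the other hand, the $k=1$ estimate also yields the tail bound $\P(\Pot(z)\geq a\log N)\leq N^{-\beta a^2+o(1)}$, uniformly in $z$ and in $a\leq\tfrac1{\sqrt\beta}$, whence splitting $I_s$ by the value of $\Pot$ and completing the square shows that the contribution of $\{z:\Pot(z)<(\tfrac{s}{2\beta}-\delta')\log N\}$ is at most $N^{s^2/(4\beta)-\beta(\delta')^2+o(1)}\ll I_s$. Therefore $\{z:\Pot(z)\geq(\tfrac{s}{2\beta}-\delta')\log N\}\neq\emptyset$ whp, i.e. $\max_{\Dd(0,r)}\Pot\geq(\tfrac{s}{2\beta}-\delta')\log N$ whp; letting $s\uparrow s_\star$ and $\delta'\downarrow0$ gives $(1-\delta)\tfrac{\log N}{\sqrt\beta}$.

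The technical heart, and the main obstacle, is the precise exponential moment estimate with error uniform in the source points — in particular uniformly as the $z_j$ coalesce down to the scale $\epsilon_N$, which is exactly what governs the second moment — since this demands tracking the response of the \TDOCP\ free energy to localized fractional-charge insertions far more finely than the $o(N)$-accurate expansion of $\log\KNbeta$. A secondary difficulty is reaching the full constant $\tfrac1{\sqrt\beta}$ rather than $\tfrac1{\sqrt{2\beta}}$: the plain second-moment method only reaches $\gamma<\sqrt2$, and closing the gap up to $\gamma\uparrow2$ forces the truncation step and an appeal to the convergence theory of GMC on the whole subcritical range.
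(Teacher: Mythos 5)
Your overall architecture (sharp exponential moments, Chernoff plus a microscopic net for the upper bound, second moment/GMC plus a thick-points argument for the lower bound) is the same as the paper's, but the step you yourself identify as the "technical heart" is genuinely missing, and the justification you offer for it would not work. Your main input is the estimate $\Esp[\exp(s\sum_j \Pot(z_j))]=N^{ks^2/(4\beta)}\prod_{i<j}|z_i-z_j|^{-s^2/\beta}e^{o(\log N)}$ for the \emph{unregularized} potential, uniformly down to separations $N^{-1/2}$, which you propose to get by inserting frozen fractional charges and asserting that the resulting ratio of partition functions is $e^{o(\log N)}$ "because the perturbation is localized and $O(k)$-small relative to $N$". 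That is not an argument: the available expansions of $\log\KNbeta$ (e.g.\ \eqref{global_law}, or the $o(N)$ expansions of \cite{LebSerCLT}) control such ratios only to $e^{o(N)}$, and obtaining $e^{o(\log N)}$ (in fact $e^{O(1)}$) precision for point-charge insertions at arbitrary $\beta$ is exactly the open difficulty alluded to in the discussion of \cite{LLR22}. The paper never works with $\Pot(z)$ itself: it mollifies $\logz$ at scale $\epsilon\gtrsim N^{-1/2}$ \emph{and} recenters by $\g$ so that the Laplacian of the test function has total mass zero --- without that, the Laplace-transform identity of Lemma~\ref{lem:rewrite_Laplace} does not even apply --- and then gets the needed $O(1)$-precision through the transport comparison of partition functions (Propositions~\ref{prop:compar1} and~\ref{prop:compar2}, with error $O(sN^{3/4}\ell^{-1/2}\log^{1/2}(N\ell^2))$, which is $O(1)$ only because $s\asymp N^{-1}$), the local laws of \cite{armstrong2019local}, and the separate a priori control of the centering term $\LN(\g)$ (Propositions~\ref{prop:expg} and~\ref{prop:expo_hmu}, the latter via a scaling trick). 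None of this machinery is replaced by anything in your sketch, so the estimate your whole proof rests on is assumed rather than proved.

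Two further points. First, in the lower bound for $\gamma\in[\sqrt2,2)$ you invoke a Kahane-type truncation "at some intermediate scale", but your claimed input only concerns the field at distinct points with a single (fully microscopic) cutoff; the truncation and the identification with subcritical GMC require joint exponential moment asymptotics for the field regularized \emph{at different scales} at different points, which is precisely what the paper establishes in \eqref{asymp_mom} by induction using the two-scale comparison of Proposition~\ref{prop:compar2}. Second, your upper bound's net-plus-increment step (controlling oscillations of the averaged potential between net points) is stated as following from discrepancy bounds with $O(1)$ error; this is morally right but is itself a multiscale estimate --- the paper needs the uniform Lipschitz fluctuation bound of Lemma~\ref{lem_fluct_uniform} over a whole hierarchy of scales (Proposition~\ref{prop:lattice}) to make it rigorous. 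With the exponential-moment input granted, your arithmetic for both bounds is correct and the constants match; but as written the proposal defers, rather than solves, the actual content of the theorem.
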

As a byproduct of our proof, we also obtain a control on a certain regularization of $\Pot$ at microscopic scale (see Corollary~\ref{cor:UB}), which allows us to state a uniform control on the fluctuations of linear statistics for a certain class of $C^2$ test functions (see Proposition~\ref{prop:unibound}).

\subsection{Connections with the literature}
Our interest in Theorem \ref{theo:main} is motivated by connections with the theory of random matrices and the theory of logarithmically correlated fields.

\subsubsection*{The Ginibre ensemble and normal matrix models}
For the specific value $\beta =2$ of the inverse temperature in \eqref{eq:Pnbetav2}, $\PNbeta$ \corT{coincides with} the joint distribution of eigenvalues for \textit{Ginibre} matrices (i.e. matrices whose entries are i.i.d. complex Gaussians normalized by $1/\sqrt{N}$) see again \cite[Chap.~15]{Forrester} or the recent survey \cite{Forrester_review}. \corT{In particular,} the Coulomb gas potential $z \mapsto \Pot(z)$ in \eqref{def:PotXN} can then be interpreted as a \corT{\emph{log-characteristic polynomial}, namely the logarithm of the absolute value of the characteristic polynomial of the associated complex Ginibre matrix}. \corT{In this ($\beta = 2$) case, the statement of Theorem~\ref{theo:main} was proved in \cite{Lambert_2020}.} \corO{We also mention \cite{WW}, where exponential moments of $\Pot(z)$ are computed when $\beta=2$; from the results of \cite{WW}, the upper
bound in Theorem \ref{theo:main} can be readily obtained in that case, but even for $\beta=2$, mixed moments at different points
need to be computed in order to obtain the lower bound.}

\vspace{0.2cm}


\corT{The Ginibre ensemble has the property of being \emph{determinantal} (see e.g. \cite[Chap.~6.4]{hough2009zeros}), which is akin to being ``integrable'' and gives access to hard but explicit computations. Note that one can consider more general \emph{normal matrix models} for which the inverse temperature is still $\beta =2$ but the ``confining'' potential $\frac{|x|^2}{2}$ in \eqref{eqHN} is replaced by other potentials, see e.g. \cite{ameur2011fluctuations,Ameur_2015,Ameur_2021}. These models are also determinantal and, in contrast, the present paper fits into a line of work exploring what can be done \emph{without} the determinantal structure.}

\subsubsection*{Logarithmically correlated structure}
\corT{For the Ginibre ensemble, following a prediction of Forrester and using determinantal techniques, \cite{RV07} proved that the log-characteristic polynomial $\Pot$ converges to some planar Gaussian Free Field (GFF). This follows from \emph{a central limit theorem (CLT) for fluctuations of linear statistics} i.e.~quantities of the form $\sum_{i=1}^N f(x_i) - N \int f \d \mm_0$, with $f$ of class~$C^1$. For normal matrix models, similar questions are treated (among other things) in \cite{ameur2011fluctuations,Ameur_2015,Ameur_2021}.
The CLT was extended to arbitrary temperatures $\beta > 0$ in \cite{bauerschmidt2019two,LebSerCLT} with a slightly stronger assumption on the regularity of $f$. It implies again that for all $\beta > 0$ the potential $\Pot$ converges, in some weak sense, to a two-dimensional GFF.}
\corT{The two-dimensional GFF is a well-known example of a 
\corO{Gaussian logarithmically correlated field, that is, a centered distribution-valued  
Gaussian field whose correlation decays with the logarithm of the distance.
Such fields can be seen as possessing essentially independent contributions from
different dyadic scales, see the discussion in \cite{biskup}.} 
A question of particular interest is the study of extreme values of such fields,
\corO{which has been treated e.g.~in \cite{DRZ17} in some generality.} In view of the convergence results mentioned above, it is thus natural to ask whether the maximum of $\Pot$ behaves like the maximum of a 2d logarithmically correlated field, and Theorem \ref{theo:main} shows that this is true \emph{for the leading order} (see Section \ref{sec:open} for a discussion about lower order terms).}

\subsubsection*{{One-dimensional analogues.}}
\corT{The density \eqref{eq:Pnbetav2} restricted to the real line (resp. the unit circle) coincides with the eigenvalue distribution of the so-called Gaussian (resp. Circular) $\beta$-Ensemble (G$\beta$E, resp. C$\beta$E), which are Hermitian (resp. unitary) random matrix models - the cases $\beta = 2$ and (to a slightly lesser extent) $\beta =1, 4$ being of particular interest.}

Extremes of the (logarithm of the) characteristic polynomial for these matrix models have generated much interest, especially in the case of the CUE (corresponding to the C$\beta$E with $\beta=2$), due to a celebrated
conjecture of Fyodorov, Hiary and Keating \cite{FHK12} that predicts the limiting form of the fluctuations of the maximum and links these to analogous fluctuations for the Riemann zeta-function.

This has stimulated much recent work, starting with \cite{ABB}, \cite{PaquetteZeitouni02} (for the CUE), \cite{CMN} and \cite{PZarXiv} (for the C$\beta$E); the last article indeed proves a version of the FHK conjecture.
\corO{On a related subject, recall that a 
Gaussian multiplicative chaos (GMC) on the unit circle is the weak limit of measures whose densities with respect to the Lebesgue measure are the exponential of a smoothed version
of a logarithmically correlated Gaussian field, properly normalized (see \cite{RV14} for background). It has been established that powers of the 
CUE characteristic polynomial,
viewed as
the density of a measure with respect to the Lebesgue measure 
on the unit circle,
converge to a GMC} in the so-called $L^1$ phase 
\cite{NWS20}. Such measures are limits of exponentials of smoothed logarithmically correlated fields and they describe the fluctuations of thick points (extreme level sets) of the characteristic polynomial \cite{JLW22}.
Weaker analogous results for the case of GUE are contained in \cite{LambertPaquette01} and \cite{CFLW21}, see also \cite{BMP22} and \cite{ABZ22} for the G$\beta$E.

\corT{\subsubsection*{Connection with the Quantum Hall Effect}
The probability density $\PNbeta$ is connected to the study of the (fractional) Quantum Hall Effect (QHE) through the so-called \emph{Laughlin wave function} (more precisely, its absolute square). There is a huge literature on QHE, let us simply refer to the expository text \cite{rougerie2019laughlin} which presents the connection with Coulomb gases using a terminology very close to ours. As we will explain next, our proof of Theorem \ref{theo:main} relies on computing the asymptotics of (joint) exponential moments of $\Pot$. These asymptotics are also related to the problem of determining the statistics of certain types of quantum quasi-particles arising in fractional QHE experiments, by considering  modifications of the Laughlin function as trial states. This connection is explained in further details in \cite[Section~1.3]{LLR22} - note however that the estimates required for that problem go beyond the precision achieved here.}

\subsection{Comments on the strategy.}

\subsubsection*{Existing strategies for logarithmically correlated fields}

\corO{As noted above,
a} general methodology to handle extremes of Gaussian logarithmically correlated fields has been developed in the last decades; due to space limitations, we do not review here the history, and refer instead to \cite{DRZ17} and \cite{biskup} for details. When applied outside the Gaussian context, this methodology 
requires the \corO{decomposition of the field to (essentially) independent 
contributions from different dyadic scales,
the evaluation of exponential moments of the field,
together with the introduction of certain barriers, that is restrictions
on the partial sums of these contributions; we refer again to
\cite{DRZ17} for precise definitions.}
These techniques seem crucial in obtaining sharp results (at the level of $O(1)$ fluctuations for the extremes), and are often hard to implement outside the Gaussian setup. For example, the works mentioned above concerning the extremes of the electrostatic potential for log-gases on the unit circle (\corT{which, for finite $N$, are \emph{not} Gaussian fields}) use, at different levels of precision, variants of these methods, with much technical work going 
into \corO{obtaining decomposition of the fields, inserting appropriate barriers, and controlling comparisons with the Gaussian setup.}

\corT{In this paper, we crucially rely on an} observation made in \cite{LOS18} and \cite{CFLW21} using \corO{GMC theory
in order to obtain the leading order of fluctuations; one can bypass the
use of barriers and sharp results on asymptotic independence often obtained by computing characteristic functions, at the cost of obtaining sharp estimates on exponential moments \corO{of $\Pot$}.}

\subsubsection*{Related computations of exponential moments}
In the context of log-gases, evaluation of exponential moments is at the heart of many proofs of the central limit theorem for linear statistics. \corT{An early application of this method is due to Johansson \cite{johansson1998fluctuations} for eigenvalues of random Hermitian matrices, which can be mapped to certain one-dimensional log-gases (at $\beta = 2$). \corO{He then proceeded to
analyse the log-gas model for all $\beta$. This was later refined in
e.g. \cite{Shcherbina_2014}}.} For random normal matrix models, a proof of the CLT for linear statistics going along the same lines is sketched in \cite[Sec.~7.2]{ameur2011fluctuations}. For the two-dimensional case at arbitrary temperature, the method inspired by Johansson was implemented in \cite{LebSerCLT,bauerschmidt2019two,serfaty2020gaussian}, with additional analytic challenges compared to the $1d$ case.
\corG{Our analysis is based on this approach that we review below.}

\corT{\subsubsection*{The electric energy approach}
The analysis of the Coulomb gas used here (Section \ref{sec:prelim} and Section \ref{sec-UB} as well as the Appendix) relies on the general ``electric energy'' approach to 2DOCP's as developed by Serfaty and co-authors, starting with \cite{sandier20152d}. In particular we use:
\begin{itemize}
\item The “splitting formula” of Sandier-Serfaty and the general idea of working with the equilibrium measure $\mm_0$ instead of the confining potential, which involves a slight re-writing of the Gibbs measure $\PNbeta$, bringing us closer to the physics point of view on 2DOCP's, see Section \ref{sec:rewriting}.
\item The electric energy and its local versions, together with local laws i.e. good controls on the exponential moments of the local energy \emph{up to the microscopic scale} (which is crucial for us) as in \cite{armstrong2019local}, see Section \ref{sec:Energy}.
\item The general spirit of controlling fluctuations through the electric energy, see Section \ref{sec:fluctuations}.
\item The ``transportation'' approach introduced in \cite{LebSerCLT} and the fine energy expansion along a transport found in \cite{serfaty2020gaussian}.
\item Computation of exponential moments of linear statistics, relative expansion of partition functions and Serfaty's ``smallness of anisotropy'' trick, for which up-to-date statements are found in \cite{serfaty2020gaussian}.
\end{itemize}
The last two items are the heaviest technically,
and we postpone their detailed discussion to the Appendix.}

\subsection{Sketch of the proof}
The proof of Theorem \ref{theo:main} is split into an upper bound on the typical maximal value of the electrostatic potential together with a matching lower bound. \corT{In both parts, the goal can be understood as making a comparison to an ideal ``Gaussian case'' where the values of $\Pot$ would be} \corG{a Gaussian logarithmically correlated field.
However, one can ignore the correlations for the upper-bound.}

\corT{\subsubsection*{Upper bound}
The core of the proof for the upper bound (in Section \ref{sec-UB}) is a good control of exponential moments for the values of $\Pot$ i.e. for linear statistics of the form
\begin{equation*}
\sum_{i=1}^N \log|z-x_i| - N \int \log |z-x| \d \mm_0(x), \quad z \in \Dd_1.
\end{equation*} \\
In general, moments of linear statistics can be controlled either by purely energy-based considerations (see Lemma~\ref{lem_fluct_uniform}) or in a more precise fashion using the results of \cite{serfaty2020gaussian}. The first option is fairly robust but it usually yields sub-optimal estimates, so we would like to use the second option, which is well-suited to linear statistics of smooth, compactly supported functions living at a certain ``scale''. However, when considering the test function $x \mapsto \log|z - x|$ (for $z \in \DD_1$) several problems arise:
\begin{enumerate}
\item It is inherently multi-scale.
\item It is singular near $x = z$, whereas the results of \cite{serfaty2020gaussian} require a few derivatives.
\item It is not compactly supported. In fact, as an inspection of the proof of \cite{serfaty2020gaussian} reveals, the real issue is not the lack of compact support but rather the fact that \emph{the total mass of its Laplacian is not $0$}.
\end{enumerate}
In order to be able to deal with the first item, i.e.~to treat test functions that live on different scales, we go back to the proof of \cite{serfaty2020gaussian} and make the necessary adaptations. As a first step, we recast the result of \cite{serfaty2020gaussian} in a slightly different way (Proposition \ref{prop:compar1}), which we then use iteratively to treat the multi-scale setting (Proposition \ref{prop:compar2}). This is carried out in Section \ref{sec:prelim}, with proofs postponed to the appendices. \\
To deal with the second and third items, we regularize the test function $x \mapsto \log|z - x|$ and we ``center'' it - namely, we substract some well-chosen test function so that the Laplacian of the difference has total mass $0$.
\begin{itemize}
\item The regularization procedure is standard. By a simple trick (Lemma \ref{lem:regulPot}) using sub-harmonicity, we see that bounding the maximal value of the \emph{regularized} version of $\Pot$ is enough to control the maximal value of the ``true'' $\Pot$.
\item The ``centering'' is constructed ad hoc in Section \ref{subsec-center}, and Proposition \ref{prop:expg} (relying on a bound for exponential moments) guarantees that \emph{the presence of the centering does not matter when evaluating the maximal value of the field}.
\end{itemize}
The work done so far allows us to estimate the exponential moments of linear statistics corresponding to an appropriately regularized version of $x \mapsto \log|z - x|$ for any \emph{fixed} $z$ in the interior of the unit disk. This is only a point-wise bound (recall that we want to control the \emph{maximal} value over all $z$'s) but the probabilistic tails obtained through Markov's inequality are good enough to go through a big union bound and to control the maximal value of the potential over all points of the disk living on a very narrow lattice of stepsize $\approx N^{-1/2-\delta}$ with $\delta$ small, see Lemma \ref{lat-est}. \\
Finally, it remains to extend the result from that lattice to the whole disk. This requires a control of the difference between the potential felt at two points separated by $\approx N^{-1/2-\delta}$, which is presented in Proposition \ref{prop:lattice}.}

\subsubsection*{Lower bound}
The proof of the lower bound is provided in Section \ref{sec-LB}. \corT{As mentioned earlier, it follows the recipe of \cite{CFLW21} and is based on the construction of a sequence of measures obtained from exponentiating regularized versions of $\Pot$ similar to those introduced for the upper bound, see \eqref{eq-mugamma}.}

The heart of the proof consists then in showing (see Proposition \ref{prop:GMC}) that this sequence of measures converges to a Gaussian Multiplicative Chaos (GMC), which is \corT{defined as} the limit of similar objects constructed from a Gaussian process. The result of \cite{CFLW21} guarantees that \corT{in order to prove the desired convergence, it is enough to obtain asymptotics of exponential moments of  linear combinations of  regularized logarithm centered at different points}, see \eqref{asymp_mom}.
The proof of the latter is done by induction and uses in a crucial way the two-scale statement of Proposition~\ref{prop:compar2} (that was already used for proving the upper bound).

\subsection{{Open problems}}
\label{sec:open}
The most obvious open question regarding Theorem \ref{theo:main} is that it only gives the \emph{leading order} of the maximum of $\Pot$. If one wants to actually see the influence of the underlying logarithmically correlated structure, one needs to evaluate (at least) the next order correction, which is expected to be $\frac{3}{4 \sqrt{\beta}} \log\log N (1 + o(1))$ due to the log-correlated structure. The techniques for achieving that go beyond our methods.

{In a different direction, it} is natural to consider replacing $|x|^2$ in \eqref{eqHN} by other growing (real) functions~$f$.
Applying our methods to that case requires three ingredients: first, one needs regularity of the background density $\rm_0$, and to modify
the
electrostatic potential $\hmu$, see \eqref{def:hmu} accordingly. Second, one would need
to modify the background function $\g$ of \eqref{eq:g}, which
is easy to do in the radial case. And third, one should look for
a replacement for Claim \ref{claim_scalin}, whose proof is
based on a scaling argument.
In the case of monomials $f(x)=|x|^q$, it is simple to carry out the adaptations, but already in the case of a general (even) polynomial $f(|x|)$ one needs to find a
replacement for
the scaling argument.

A particular case of interest, concerns the \textit{real} Ginibre ensemble, see \cite{FN07}. There, one needs to deal with the symmetry of the point configuration $\XN$, as well as with the special role of the real axis. This would require significant changes in our derivation, and we leave this as an open problem.

Finally, we expect the result of Theorem \ref{theo:main} to remain true if one considers the maximum of $\Pot$ over the whole unit disk (or even the whole plane).
We also expect that Proposition~\ref{prop:MC} holds without any regularization; see Remark~\ref{rk:MC}.

\subsection{Notation} \label{sec:not}
\begin{itemize}
	\item If $\Omega$ is a measurable subset, $\bXN(\Omega)$ denotes the number of points of $\XN$ contained in $\Omega$.
	\item We denote measures with a bold typeface (e.g.~$\mm$) and their densities with respect to the Lebesgue measure on $\R^2$ with a roman typeface (e.g.~$\rm$).
	\item \corG{For $\kk\ge 0$,  provided $\varphi$ is $C^\kk(\R^2)$, we denote by $|\varphi|_\kk$ {the $L^\infty$-norm of its} $\kk^{th}$ derivative(s).}
\corT{With this notation, if $\varphi_{\ell} := \varphi(\cdot / \ell)$ is the “rescaled” version of $\varphi$ at scale $\ell > 0$ we have $|\varphi_{\ell}|_\kk = |\varphi|_\kk \times \ell^{-\kk}$.}
	\item Integrals with respect to the Lebesgue measure are often written without explicitly mentioning the volume form, ie $\int \varphi =\int \varphi(x) \d x$.
	\item If $A, B$ are two quantities (depending on various parameters) we write $A \preceq B$ {(or $A=O(B)$)} when $|A|$ is bounded by some universal constant times $|B|$. We write $A=o(B)$ or $A\ll B$
	  if $A/B\to 0$ when a (sometimes implicit)
	  parameter goes to infinity. When no parameter is mentioned, it is understood that the parameter is $N$. We write $A \asymp B$ when $A = O(B)$ and $B = O(A)$. \corO{We write $A\gg B$ if $A,B$ are positive and $B\ll A$.}
	\item When $\mm$ is a probability measure on $\R^2$ with continuous density $\rm$, we denote its relative entropy (with respect to Lebesgue) by $\EE(\rm) := \int_{\R^2} \rm \log \rm$.
	\item  \corG{If $\varphi$ is $C(\R^2)$ and $c>0$, by ``$\varphi$ is $c$-Lipschitz'' we mean that $\sup_{x,y\in\R^2,x \neq y} \left|\frac{\varphi(x)-\varphi(y)}{x-y}\right| \leq c$.}
\end{itemize}

\subsection*{Acknowledgment}
\emph{We thank Sylvia Serfaty for communicating early versions of \cite{serfaty2020gaussian} to us and making some statements thereof more easily citable for our purposes. \corO{We thank the anonymous referees for comments that improved 
the presentation of our results, and for a careful reading.}}

\section{Preliminaries}
\label{sec:prelim}

\subsection{Re-writing the energy and the Gibbs measure}
\label{sec:rewriting}
\corT{Recall that $\mm_0$ denotes the uniform \corG{probability} measure on the unit disk. The following result is classical (see e.g. the book \cite{Serfaty_2015}):
\begin{lemma}
As $N \to \infty$, under $\PNbeta$, the empirical measure of the points $\frac{1}{N} \sum_{i=1}^N \delta_{x_i}$ converges weakly to the equilibrium measure $\mm_0$ both in probability (and almost surely when coupling all the $\PNbeta$'s in the trivial way), with large deviations at speed $N^2$.
\end{lemma}
In particular, it makes sense to consider the difference $\frac{1}{N} \sum_{i=1}^N \delta_{x_i} - \mm_0$ as encoding the ``second-order'' behavior \corG{(fluctuations)} of the system. Starting with \cite{sandier20152d}, studies of the 2DOCP have benefited from the following (seemingly simple) rephrasing:
Let $\bXN := \sum_{i=1}^N \delta_{x_i}$ be the purely atomic measure of total mass $N$ on $\R^2$ associated to the $N$-tuple of positions $\XN = (x_1, \dots, x_N)$. We define the \textit{logarithmic interaction energy} $\F(\XN, \mm_0)$ as:
\begin{equation}
\label{def:FXN}
\F(\XN, \mm_0) := \hal \iint_{\diagc} - \log|x-y| (\d\bXN - N \d\mm_0)(x) (\d\bXN - N \d\mm_0)(y),
\end{equation}
where $\triangle$ denotes the diagonal in $\R^2 \times \R^2$. Recalling that $-\log$ is (up to a multiplicative constant) the Coulomb kernel in $\R^2$, we can think of $\F(\XN, \mm_0)$ as being the electrostatic interaction energy of a neutral system made of $N$ point charges placed at $(x_1, \dots, x_N)$ together with a continuous ``neutralizing'' background $N \mm_0$ of opposite charges, and this is indeed the point of view used in the physics literature about 2DOCP's (see e.g. \cite{alastuey1981classical}).\\
We also introduce an auxiliary function $\zeta$ (the ``effective confinement''), which vanishes on $\Dd$ and is set to:
\begin{equation}
\label{def:zeta}
\zeta(x) := - \log |x| + \frac{1}{2} |x|^2 - \frac{1}{2} \text{ on $\R^2 \setminus \Dd$.}
\end{equation}
The \emph{splitting formula} of Sandier-Serfaty (\cite[Sec.~2]{sandier20152d}) consists simply in observing that:
\begin{equation}
\label{SplittingFormula}
\HN(\XN) = \F(\XN, \mm_0) + 2 \sum_{i=1}^N \zeta(x_i) + \text{ a constant term depending on $N$ but not on $\XN$.}
\end{equation}
In particular, the Gibbs measure $\PNbeta$ introduced in \eqref{eq:Pnbetav2} coincides with
\begin{equation}
\label{eq:Pnbetav1}
\d\PNbeta(\XN) := \frac{1}{\KNbeta} \exp\left( - \beta \left( \F(\XN, \mm_0) + 2N \sum_{i=1}^N \zeta(x_i) \right) \right) \d\XN,
\end{equation}
where $\KNbeta$ is the corresponding \emph{partition function}, namely:
\begin{equation}
\label{def:KNbeta}
\KNbeta := \int_{\left(\R^2\right)^N} \exp\left( - \beta \left( \F(\XN, \mm_0) + 2N \sum_{i=1}^N \zeta(x_i) \right) \right) \d\XN.
\end{equation}
In the sequel, we will work with the expression \eqref{eq:Pnbetav1} instead of \eqref{eq:Pnbetav2} for the Gibbs measure of the 2DOCP, and with $\F(\XN, \mm_0)$ instead of $\HN(\XN)$ as the ``energy'' of the system.
}

\begin{remark}
\label{rem:perfectconfinement}
In the physics literature about the \TDOCP, the Gibbs measure is often written down as in \eqref{eq:Pnbetav1} but with an ``effective confinement''
$\zeta$ set to $+\infty$ outside $\Dd$ (``perfect confinement''). Our analysis in the present paper applies to this model as well (the only differences would appear when studying properties close to the boundary, which is not our purpose).
\end{remark}


\subsection{Energy at global and local scales}
\label{sec:Energy}
{Let $\mm$ be a probability measure on $\Dd$ with a density that is continuous and bounded below by a positive constant on $\Dd$.}

\subsubsection*{Global energy}
If $\XN$ is a $N$-tuple of points and $\bXN$ is the associated atomic measure of mass $N$, we {extend the definition \eqref{def:FXN}} and define the ``global energy'' $\F(\XN, \mm)$ as:
\begin{equation}
\label{def:globalenergy}
\F(\XN, \mm) := \hal \iint_{\diagc} - \log|x-y| (\d\bXN - N \d\mm)(x) (\d\bXN - N \d\mm)(y).
\end{equation}
We introduce the associated Gibbs measure (where\footnote{{The function $\zeta$ plays almost no role in our analysis, as we are focused on the bulk of the system. For simplicity we keep “the same $\zeta$” in all cases.}} $\zeta$ is as in \eqref{def:zeta})
\begin{equation}
\label{def:PNbetamu}
\d \PNbetamu(\XN) := \frac{1}{\KNbeta(\mm)} \exp\left(- \beta \left( \F(\XN, \mm) + 2N \sum_{i=1}^N \zeta(x_i) \right) \right) \d \XN,
\end{equation}
with the corresponding partition function:
\begin{equation}
\label{def:KNbetamu}
\KNbeta(\mm) := \int_{(\R^2)^N} \exp\left(- \beta \left( \F(\XN, \mm) + 2N \sum_{i=1}^N \zeta(x_i) \right) \right) \d \XN.
\end{equation}
It is known (see e.g. the pioneering analysis of \cite[Thm. 1]{sandier20152d}) that {for all $\beta > 0$}:
\begin{equation}
\label{global_law}
\log \KNbeta(\mm) = \frac{\beta}{4} N \log N + O_\beta(N).
\end{equation}
\corT{One should thus think of 
$\F(\XN, \mm) + \frac{1}{4} N \log N$ as being the ``interesting'' global energy term, which is random and typically of order $N$. In fact, in the $\frac{1}{4} N \log N$ term, $\log N$ is related to the logarithm of the ``microscopic scale'' (here $N^{-1/2}$) and the $N$ factor is simply the total number of particles. This will be useful to keep in mind when encountering the local version below.}

\subsubsection*{Length scales} \corT{The system is supported on the unit disk, hence the natural \emph{global} scale is $\ell =1$.} For many interesting questions it is crucial to understand the system at \emph{local} scales i.e. in squares (or disks) of size $\ell \ll 1$. One distinguishes between \emph{mesoscopic} scales $\ell$ such that $N^{-1/2} \ll \ell \ll 1$ and the \emph{microscopic} scale $\ell \simeq N^{-1/2}$. {A constant $\rho_\beta \geq 1$ depending only on $\beta$ was introduced in \cite[(1.15)]{armstrong2019local}, it corresponds to the “minimal lengthscale” above which good controls on the energy can be obtained. In this paper when considering a length scale $\ell$ we will always assume that}\footnote{{Since \cite{armstrong2019local} work with a different scaling than us, we need to rescale their $\rho_\beta$ by $N^{-1/2}$.}}  
\begin{equation}
\label{eq-100124}
\corO{\ell \geq \trhob := \rho_\beta N^{-1/2}.}
\end{equation} 
 (Note that since $\rho_\beta \geq 1$, we always have $N \ell^2 \geq 1$.)

\subsubsection*{Local (electric) energy}
\corT{Due to the long-range nature of the logarithmic potential, the proper notion of a \emph{local energy} is a bit subtle. Suitable definitions were given in \cite{leble2017local,armstrong2019local}, and we recall below the relevant definitions and facts.}

\vspace{0.2cm}
\corT{\textbf{1. Electric field.} We let $\Pot^{\bXN, \mm}$ (resp. $\nabla \Pot^{\bXN, \mm}$) be the \textit{true electric potential} (resp. \textit{true electric field}) generated by the ``charged system'' $\bXN-\mm$, namely the map (resp. vector field) defined on $\R^2$ by:
\begin{equation*}
\Pot^{\bXN, \mm}(x) := \int - \log |x-y| \d \left(\bXN - \mm \right)(y),\quad  \nabla \Pot^{\bXN, \mm}(x) = \int - \nabla \log |x-y| \d \left(\bXN - \mm \right)(y).
\end{equation*}
We recall that $-\log$ satisfies $- \Delta (-\log) = 2\pi \delta_0$ on $\R^2$ in the sense of distributions. It is easy to check that the following identity is satisfied in the sense of distributions:
\begin{equation}
\label{PropertyPot}
- \Delta \Pot^{\bXN, \mm} = 2\pi \left(\bXN - \mm \right), \quad \text{ with $\nabla \Pot(z) \to 0$ as $|z| \to \infty$,}
\end{equation}
and that $\nabla \Pot^{\bXN, \mm}$ is \corG{(almost surely)} in $L^p_{\mathrm{loc}}$ for $p < 2$ yet fails to be in $L^2$ around each point charge.}

\vspace{0.2cm}

\textbf{2. Truncations.}
\newcommand{\ff}{\mathsf{f}}
\corT{In order to handle the singularities, one often proceeds to a truncation of the fields near each point charge. For $\eta > 0$ we let $\ff_{\eta}$ be the function:
\begin{equation*}
\ff_{\eta}(x) := \max\left( -\log \frac{|x|}{\eta}, 0\right) = \begin{cases} -\log|x| + \log|\eta| & \text{if } x \leq \eta \\ 0 & \text{if } x \geq \eta \end{cases}.
\end{equation*}
For each $i = 1, \dots, N$ let $\eta_i$ be a \corG{positive} real number. For a fixed choice of $\vec{\eta} := (\eta_1, \dots, \eta_N)$, we let $\nabla \Pot^{\bXN, \mm, \vec{\eta}}$ be the \textit{truncated} electric field given by:
\begin{equation*}
\nabla \Pot^{\bXN, \mm, \vec{\eta}} = \nabla \Pot^{\bXN, \mm} - \sum_{i=1}^N \nabla \ff_{\eta(x)}(\cdot - x).
\end{equation*}
We are thus effectively replacing $-\log|x - \cdot|$ by $-\log \eta$ near each point charge. We refer to \cite[Section 2.2 \& Appendix B.1]{armstrong2019local} or to \cite[Sec 3.1]{serfaty2020gaussian} for more details.}

\corT{For $i = 1, \dots, N$ define the “nearest-neighbor” distance $\rr_i$ as:
\begin{equation}
\label{def:nn_distance}
\rr_i := \frac{1}{4} \min \left( \min_{j \neq i} |x_i-x_j|, N^{-1/2} \right).
\end{equation}
We let $\vec{\rr} := (\rr_1, \dots, \rr_N)$, which can serve as a convenient choice of truncation. Note that the $\rr_i$'s are always smaller than $\frac{1}{4} N^{-1/2}$.}

\vspace{0.2cm}

\corT{\textbf{3. Electric formulation of the energy.}
The following identity (see e.g. \cite[Lemma~2.2.]{armstrong2019local}) can be considered as the starting point of the ``electric energy'' approach:
\begin{equation}
\label{ElectricIdentity}
\F(\bXN,\mm) = \hal \left( \frac{1}{2\pi} \int_{\R^2} |\nabla \Pot^{\bXN, \mm, \vec{\rr}}|^2 + \sum_{i=1}^N \log \rr_i \right)  - \sum_{i=1}^{N} \int_{\Dd(x_i,\rr_i)} \ff_{\rr_i}(t-x_i) \d \mm(t).
\end{equation}
}
\vspace{0.05cm}

\corT{To summarize where we stand: the interaction energy between the particles plus the effect of the potential, or equivalently (by \eqref{SplittingFormula}) the electrostatic interaction energy $\F(\bXN, \mm)$ of the charged system ``point charges minus background'' can be (by \eqref{ElectricIdentity}) rephrased as a certain ``electric energy'';
\begin{enumerate}
\item The square of the $L^2$ norm of the corresponding electric field $\int_{\R^2} |\nabla \Pot^{\bXN, \mm, \vec{\rr}}|^2$ after a suitable truncation (this is the role of the $r_i's$). This term is typically of order $N$.
\item  A ``renormalization'' term $\sum_{i=1}^N \log \rr_i$. This term is equal to the constant $-\hal N \log N$ plus something (typically) of order $N$.
\item A correction term $\sum_{i=1}^{N} \int_{\Dd(x_i,\rr_i)} \ff_{\rr_i}(t-x_i) \d \mm(t)$ which is deterministic and bounded \corO{uniformly in $N$ 
 (with constant} depending on $\mm$).
\end{enumerate}
}
\vspace{0.2cm}

\corT{\textbf{4. Local energy.}
Going back to the initial definition \eqref{eqHN} of the energy $\HN(\XN)$, it seems that the natural notion of ``the energy of $\XN$ within a subset $\Omega$'' could be to look at:
\begin{equation*}
\hal \sum_{i \neq j, x_i \in \Omega, x_j \in \Omega} - \log|x_i - x_j| + \sum_{i, x_i \in \Omega} \frac{|x_i|^2}{2}.
\end{equation*}
However, due to the long-range character of the logarithmic interaction, this does not  ``work'' and the electric field introduced above turns out to be a more convenient object to deal with. In short, it is better to localize the $L^2$ norm of the electric field (the field itself remains a \emph{global} object depending on the entire point configuration) than to localize the interaction.}

\corT{We now go through some definitions found in the beginning of \cite[Sec.~2.3]{armstrong2019local}, which will lead us to introduce a proper notion of ``local energy''. Let $\Omega$ be a disk or a square in $\R^2$ and let us take $U = \R^2$ when reading \cite{armstrong2019local}. \corG{In our case,} the function $\mathsf{h}$ defined in \cite[(2.20)]{armstrong2019local} is identically $0$ and we can ignore it. The potential $u$ defined in \cite[(2.22)]{armstrong2019local} is nothing but $\Pot$ (as the equation that needs to be solved is exactly \eqref{PropertyPot}) up to some irrelevant additive constant. The distance $\tilde{\rr}_i$ introduced in \cite[(2.23)]{armstrong2019local} (recall that we have $U = \R^2$ so $\partial U = \emptyset$) becomes here simply:
\begin{equation*}
\tilde{\rr}_i = \begin{cases} \rr_i & \text{ if } \dist(x_i, \partial \Omega) \geq \hal \\
\frac{1}{4} N^{-1/2} & \text{ otherwise.}
\end{cases}
\end{equation*}
With a slight abuse of notation we let $\tilde{\rr}$ be the vector $(\tilde{\rr}_1, \dots, \tilde{\rr}_N)$. The ``local energy in $\Omega$'' as defined in \cite[(2.24)]{armstrong2019local} reads:
\begin{equation}
\label{eq:LocalEnergy}
\F^{\Omega}(\XN) := \frac{1}{2} \left( \frac{1}{2\pi} \int_{\Omega} |\nabla \Pot^{\bXN, \mm, \tilde{\rr}}|^2 + \sum_{i, x_i \in \Omega} \log \tilde{\rr}_i \right)  - \sum_{i, x_i \in \Omega} \int_{\Dd(x_i,\tilde{\rr}_i)} \ff_{\tilde{\rr}_i}(t-x_i) \d \mm(t),
\end{equation}
compare with the right-hand side of \eqref{ElectricIdentity}. One should have in mind (see below for a precise result) that:
\begin{enumerate}
\item $ \int_{\Omega} |\nabla \Pot^{\bXN, \mm, \tilde{\rr}}|^2$ is typically of size $N \times |\Omega|$.
\item $\sum_{i, x_i \in \Omega} \log \tilde{\rr}_i$ is equal to $- \hal n \log N$ (with $n$ the number of points in $\Omega$) plus a term which is typically of order $N \times |\Omega|$.
\item The last term is again a deterministic, bounded correction.
\end{enumerate}
}

\vspace{0.2cm}

\corT{\textbf{5. The local ``energy-points'' density.}
It turns out that many relevant error terms can be controlled by the sum of:
\begin{enumerate}
\item The non-negative part of the local energy \eqref{eq:LocalEnergy} corresponding to the square of $L^2$ norm of the (truncated) electric field in a domain $\Omega$,
\item and the number of points in $\Omega$.
\end{enumerate}
So if $z$ is a point in $\Dd$ and $\ell$ a length scale we will denote by $\EnerPts(z,\ell)$ the quantity:
\begin{equation}
\label{def:EnerPts}
\EnerPts(z,\ell)(\XN) := \int_{\Dd(z, \ell)} |\nabla \Pot^{\bXN, \mm, \tilde{\rr}}|^2  + n, \quad n = \bXN \left(\Dd(z, \ell)\right).
\end{equation}
}

\subsection*{Local laws}
\corO{Recall the condition \eqref{eq-100124}. Due to a weaker understanding of the system near the boundary $\partial \Dd$, one needs to introduce the following additional} condition on $(z, \ell)$ (where $z$ is a point of $\Dd$ and $\ell$ is a length scale):
\begin{equation}
\label{condiell}
\dist\left( \Dd(z, \ell), \partial \Dd\right) \geq \Cc_\beta N^{-1/4},
\end{equation}
where $\Cc_\beta$ is some constant depending only on $\beta$ introduced in the assumptions of \cite[Thm.1]{armstrong2019local}. Then \cite{armstrong2019local} prove the following “local laws”:
\begin{lemma}[Local laws for $\EnerPts$]
If $(z, \ell)$ satisfies \eqref{condiell} and the parameter $t$ is smaller than some constant depending only on $\beta$, then:
\begin{equation}
\label{loc_laws}
\log \Esp_{\PNbetamu} \left[ \exp\left(t \EnerPts(z, \ell) \right) \right] = O\left(t N \ell^2\right),
\end{equation}
with an implicit constant depending only on $\beta$.
\end{lemma}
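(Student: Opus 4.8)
The plan is to derive this statement directly from the local laws of \cite{armstrong2019local}; the substance of the argument is not a new estimate but a careful translation of their conventions into ours. First I would recall that \cite{armstrong2019local} work with a point configuration rescaled so that the typical interparticle distance is of order $1$ (rather than $N^{-1/2}$), which accounts for the factor $N^{-1/2}$ relating their minimal lengthscale to $\trhob$ and their boundary constant to the $\Cc_\beta N^{-1/4}$ appearing in \eqref{condiell}. Their main result controls the exponential moments of a local energy defined on small \emph{squares} $Q$ lying at distance $\gtrsim N^{-1/4}$ from $\partial\Dd$, of a form that, after undoing the rescaling, equals $\mathrm{F}_Q(\XN,\mm) + \tfrac14\,\bXN(Q)\log N$ up to an additive term of size $O(\bXN(Q))$. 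Since the disk $\Dd(z,\ell)$ can be covered by a bounded number of such squares (and conversely), the near-additivity properties of the local energy recorded in \cite[Sec.~2]{armstrong2019local} let one pass from squares to $\Dd(z,\ell)$ at the cost of universal constants; the object then governed by their bounds is precisely $\FNl(\XN,\mm) + \tfrac14\, n\log N$ with $n = \bXN(\Dd(z,\ell))$, matching the definition \eqref{def:EnerPts} of $\EnerPts(z,\ell)$ up to the additive $+n$.

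Granting this translation, the lemma splits into two complementary bounds. The \emph{upper} bound
\begin{equation*}
\log \Esp_{\PNbetamu}\!\left[\exp\!\big(t\,\EnerPts(z,\ell)\big)\right] \leq \Cbeta\, t\, N\ell^2
\end{equation*}
for $(z,\ell)$ as in \eqref{condiell} and $|t|$ below a $\beta$-dependent threshold $t_\beta$ is then the exponential moment estimate of \cite{armstrong2019local} itself; the extra additive $+n$ in $\EnerPts$ is harmless, since $n \preceq N\ell^2$ is controlled by the same local law (and is in fact already built into their formulation). For the \emph{lower} bound I would invoke the deterministic lower bound on the local energy also furnished by \cite{armstrong2019local}, of the shape $\FNl(\XN,\mm) + \tfrac14\, n\log N \geq -\Cbeta N\ell^2 - n$: the role of the $+n$ in \eqref{def:EnerPts} is exactly to absorb this $-n$, so that $\EnerPts(z,\ell) \geq -\Cbeta N\ell^2$ holds pointwise on configuration space. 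This pointwise bound immediately gives $\Esp_{\PNbetamu}[\exp(t\,\EnerPts(z,\ell))] \geq \exp(-\Cbeta|t|N\ell^2)$ when $t<0$; when $t\geq 0$ one applies Jensen's inequality together with $\Esp_{\PNbetamu}[\EnerPts(z,\ell)] \leq \Cbeta N\ell^2$, the latter obtained from the upper bound evaluated at a fixed small $t_0 \in (0,t_\beta)$. Combining the two directions yields $\big|\log \Esp_{\PNbetamu}[\exp(t\,\EnerPts(z,\ell))]\big| \leq \Cbeta |t|\, N\ell^2$, which is the assertion.

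I expect the only real obstacle to be bookkeeping rather than analysis: keeping precise track of the rescaling between the normalization of \cite{armstrong2019local} and that of \eqref{eq:Pnbetav1}, verifying that the additive $+n$ in \eqref{def:EnerPts} is exactly the correction that makes $\EnerPts$ bounded below (so that the one-sided exponential moment bound of \cite{armstrong2019local} is automatically promoted to a two-sided estimate), and checking that the square-to-disk comparison preserves both the admissible range of $t$ and the condition \eqref{condiell} up to enlarging $\Cc_\beta$. No genuinely new probabilistic or energetic input is required; should the relevant bound not appear in \cite{armstrong2019local} for exactly the combination $\EnerPts$, the fallback would be to reassemble it from their screening and subadditivity estimates, but I anticipate that their statements apply essentially verbatim.
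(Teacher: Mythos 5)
Your proposal is correct and follows essentially the same route as the paper, which likewise deduces the lemma by translating the scaling of \cite{armstrong2019local} and invoking their Theorem~1 twice: the exponential-moment bound (1.17) (via their Lemma~B.2/(B.8)) for the local electric energy and the discrepancy bound (1.18) for the number of points, cf.\ \eqref{nbest}. Your additional Jensen/pointwise lower-bound step for the reverse direction is harmless but not needed for how \eqref{loc_laws} is used; otherwise the bookkeeping you describe is exactly the content of the paper's remark following the lemma.
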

\corT{Thus in view of the definition \eqref{def:EnerPts}, it means that in $\Dd(z, \ell)$ both the local number of points $n$ and the positive part of the local energy (up to the constant $n \log N$ term) are of order $N\ell^2$ in exponential moments}, and so \emph{down to the microscopic scale} $\ell \simeq N^{-1/2}$ (a crucial improvement over the local laws of \cite{leble2017local} which covered all mesoscopic scales).

\vspace{0.2cm}
\corT{\textbf{How to read \eqref{loc_laws} from the literature.} The statement of \cite[Thm.~1]{armstrong2019local} treats the electric energy and the number of points separately, is formulated in terms of the local energy \eqref{eq:LocalEnergy} and not of its positive part only (see \eqref{def:EnerPts}), moreover the authors work with a different scaling than ours. We now explain how to get the statement that we want to use later, namely the estimate \eqref{loc_laws}, from the literature.\\
\textit{1. Number of points.} Recall that we write here $n$ for the number of points $\bXN \left(\Dd(z, \ell) \right)$. In \cite[Theorem~1]{armstrong2019local} there are two statements about $n$, here we can use \cite[(1.18)]{armstrong2019local} which controls exponential moments of the \emph{discrepancy} (which corresponds to $n - \pi {N}\ell^2$). It is not hard to see that it implies (and is in fact a lot stronger than) a bound on the number of points of the form:
\begin{equation}
\label{nbest}
\log \Esp_{\PNbetamu} \left[ \exp\left(t n\right) \right] = O\left(t N \ell^2\right)
\end{equation}
(under the condition \eqref{condiell} and for $t$ less than a constant depending only on $\beta$).\\
\textit{2. Local energy.} The statement \cite[(1.17)]{armstrong2019local} involves the local energy, whose definition was recalled in \eqref{eq:LocalEnergy}, whereas in $\EnerPts$ we only consider the positive part of it. Moreover there is a  difference in the scaling convention. So (with our apologies to the reader) it might be easier to read the corresponding statements in \cite[Sec. 3]{serfaty2020gaussian} (which uses the same scaling convention as we do) namely:
\begin{enumerate}
\item Control on the local energy with an additive constant term due to rescaling:
\begin{equation*}
\log \Esp_{\PNbetamu} \left[ \exp\left(t \left(\F^{\Dd(z, \ell)}(\XN) + \frac{n}{4} \log N\right) \right) \right] = O\left(t N \ell^2\right),
\end{equation*}
which is \cite[Prop. 3.5, (1)]{serfaty2020gaussian}.
\item Control on the positive part of the local energy in terms of the full local energy:
\begin{equation*}
\int_{\Dd(z, \ell)} |\nabla \Pot^{\bXN, \mm, \tilde{\rr}}|^2 \leq 8 \pi \left( \F^{\Dd(z, \ell)}(\XN) + \frac{n}{4} \log N\right) + Cn,
\end{equation*}
which is \cite[(3.25)]{serfaty2020gaussian}.
\end{enumerate}
Combining these two statements with \eqref{nbest} allows us (in view of the very definition of $\EnerPts$ in \eqref{def:EnerPts}) to derive \eqref{loc_laws} as desired.
}

\subsection{Fluctuations}
\label{sec:fluctuations}
\corG{Recall that $\mm$ is a probability measure on $\Dd$.}
If $\phi$ is a $\mm$-integrable function, we define ``the fluctuation $\LN^{\mm}(\phi)$ of the linear statistics associated to $\phi$ {for the configuration $\XN$'',}  as:
\begin{equation}
\label{def:fluctuations} \LN^{\mm}(\phi) := \sum_{i=1}^N \phi(x_i) - N \int_{\Dd} \phi(x) \d \mm(x).
\end{equation}
{We write $\LN(\varphi)$ for $\LN^{\mm_0}(\varphi)$.} If $\varphi$ is Lipschitz, one can always use the following  non-optimal but \emph{uniform} control on $\LN(\varphi)$. 
\begin{lemma}
\label{lem_fluct_uniform}
Let $z$ be a point of $\Dd$ and $\ell$ be a length scale such that $(z,\ell)$ satisfies \eqref{condiell}. Denote by $\Lell$ the set of all functions that are $\ell^{-1}$-Lipschitz and compactly supported on $\Dd(z, \ell)$. Then for all $t$ such that $|t|$ is smaller than some constant depending on $\beta$, we have:
\begin{equation}
\label{eq:}
\log \Esp \left[ \sup_{\varphi \in \Lell} \exp\left(t \LN(\varphi) \right) \right] = O\left( t \left(N \ell^2\right)^\hal \right),
\end{equation}
with an implicit constant depending only on $\beta$.
A similar estimate holds for $\mathcal{L}$, the set of all 1-Lipschitz function with compact support in  $\Dd(0,2)$.
\end{lemma}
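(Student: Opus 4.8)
The plan is to bound, \emph{deterministically} for each configuration $\XN$, the quantity $\sup_{\varphi\in\Lell}t\LN(\varphi)$ by a random expression built from the local renormalized energy and the local point count, and then to take expectations using the local laws \eqref{loc_laws} and the point-count bound \eqref{nbest}. Since the ball $\Lell$ is symmetric under $\varphi\mapsto-\varphi$ we may assume $t>0$, and since the claim is trivial for the finitely many small $N$ we may assume $N$ large.

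First I would establish the \emph{energy--fluctuation inequality}. Writing $\LN(\varphi)=\int\varphi\,\d(\bXN-N\mm_0)$ and letting $h:=(-\log|\cdot|)*(\bXN-N\mm_0)$ be the electric potential, one truncates $h$ at the microscopic scale $\eta:=\rho_\beta N^{-1/2}$ (the scale underlying the definition of $\FNl$ in \cite{armstrong2019local}) to get $h_\eta$; an integration by parts then gives $\LN(\varphi)=\tfrac1{2\pi}\int\nabla\varphi\cdot\nabla h_\eta-\sum_i\big(\varphi(x_i)-\fint_{\partial\Dd(x_i,\eta)}\varphi\big)$. Because $\varphi$ is $\tfrac1\ell$-Lipschitz and supported in $\Dd(z,\ell)$, one has $\|\nabla\varphi\|_{L^2}^2\le\pi$ \emph{uniformly over $\varphi\in\Lell$}, so Cauchy--Schwarz bounds the first term by $\sqrt\pi\,\|\nabla h_\eta\|_{L^2(\Dd(z,\ell))}$, which by the construction of the local energy of \cite{armstrong2019local} (using that here their localized potential $u$ is the true truncated potential, cf.\ the Remark above) is controlled by $\FNl(\XN,\mm_0)+\tfrac14 n\log N$ up to an additive $O(N\ell^2)$, i.e.\ by $\EnerPts(z,\ell)$ up to $O(N\ell^2)$. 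The remaining sum has at most $\mathrm{Lip}(\varphi)\,\eta=\eta/\ell$ per term and only points in $\Dd(z,\ell+\eta)$ contribute (note $\eta\le\ell$, and $(z,\ell+\eta)$ still satisfies \eqref{condiell} with a slightly worse constant), so it is $\le N^{-1/2}\ell^{-1}\bXN(\Dd(z,\ell+\eta))$ up to a constant. Altogether, for a constant $C_\beta$ and every $\varphi\in\Lell$,
\begin{equation*}
|\LN(\varphi)|\le C_\beta\Big(\sqrt{N\ell^2}+\sqrt{E(\XN)}\Big),\qquad E(\XN):=\big(\EnerPts(z,\ell)\big)^{+}+N^{-1/2}\ell^{-1}\,\bXN(\Dd(z,\ell+\eta)).
\end{equation*}

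Next I would take expectations. By AM--GM, $\sqrt E\le\tfrac12\big(E/\sqrt{N\ell^2}+\sqrt{N\ell^2}\big)$, so $\Esp\big[\sup_{\varphi\in\Lell}e^{t\LN(\varphi)}\big]\le e^{O(t\sqrt{N\ell^2})}\,\Esp[e^{sE}]$ with $s:=\tfrac{C_\beta t}{2\sqrt{N\ell^2}}$. Since $N\ell^2\ge\rho_\beta^2\ge1$ we have $s\le\tfrac{C_\beta}{2}t$, hence $s$ (and $2s$) are below any fixed $\beta$-threshold once $t$ is small; then Cauchy--Schwarz together with \eqref{loc_laws} applied to $\EnerPts(z,\ell)$ (using $(\cdot)^+\le e^{(\cdot)}$) and \eqref{nbest} applied to $\bXN(\Dd(z,\ell+\eta))$ at the admissible rate $2sN^{-1/2}\ell^{-1}\le C_\beta t$ gives $\log\Esp[e^{sE}]=O(sN\ell^2)=O(t\sqrt{N\ell^2})$; both invocations are legitimate because $(z,\ell)$, and hence $(z,\ell+\eta)$, satisfy \eqref{condiell}. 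This yields $\log\Esp[\sup_{\varphi\in\Lell}e^{t\LN(\varphi)}]=O(t(N\ell^2)^{1/2})$. The stated variant for $\mathcal L$ (the $1$-Lipschitz functions supported in $\Dd(0,2)$, $\ell=1$) follows along the same lines, with the global renormalized energy $\F(\XN,\mm_0)+\tfrac14 N\log N$ — whose exponential moments are $O(N)$ at small rate by \eqref{global_law} and a standard comparison of partition functions — replacing the local quantity $\EnerPts$, and $N^{-1/2}\bXN(\Dd(0,2))$ replacing the local point term.

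I expect the main obstacle to be the first step, namely making the energy--fluctuation inequality precise while treating $\FNl$ as a black box: one must verify that the local energy of \cite{armstrong2019local} indeed dominates the $L^2$ norm of the truncated electric field over $\Dd(z,\ell)$ (up to the $\tfrac14 n\log N$ renormalization and an $O(N\ell^2)$ remainder) and keep careful track of the truncation correction, all \emph{uniformly over the Lipschitz ball $\Lell$} — the uniformity being exactly what the bound $\|\nabla\varphi\|_{L^2}\le\sqrt\pi$ supplies. Such estimates, for smooth test functions on a single scale, are precisely of the type carried out in \cite{armstrong2019local,serfaty2020gaussian}; the remaining probabilistic step is then a routine combination of the two local laws with Hölder's inequality and AM--GM, and is best relegated to an appendix as indicated in the introduction.
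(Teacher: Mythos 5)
Your proposal follows essentially the same route as the paper's proof: a configuration-wise energy--fluctuation bound that is uniform over the Lipschitz ball $\Lell$ (which the paper simply imports from \cite[Lemma B.5]{armstrong2019local}, going back to \cite[Lemma 3.9]{sandier20152d}, and which you re-derive via the truncated electric field and integration by parts), followed by the AM--GM splitting $\sqrt{a}\le a/(N\ell^2)^{1/2}+(N\ell^2)^{1/2}$ and an application of the local laws \eqref{loc_laws} (with \eqref{nbest} for your separate point-count term, which the paper's $\EnerPts$ already absorbs via its $+n$). One cosmetic slip: the truncation/point-count term $N^{-1/2}\ell^{-1}\bXN(\Dd(z,\ell+\eta))$ should be kept linear rather than placed under the square root inside $E(\XN)$ (as written, the displayed deterministic bound need not follow from your derivation for configurations with abnormally many points), but since your expectation step effectively treats that term linearly at rate $O(t N^{-1/2}\ell^{-1})$, the final estimate $O(t(N\ell^2)^{1/2})$ is unaffected.
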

\begin{proof}
This can be traced back to the analysis of \cite[Lemma 3.9]{sandier20152d}, see also \cite[Thm.~5]{rougerie2016higher}. The basic idea is that we have a configuration-wise bound of the form: \corG{for $\phi\in\Lell$,}
\begin{equation*}
|\LN(\varphi)| \leq |\varphi|_{\1} \times \ell \times \EnerPts(z, \ell)^{\hal} \leq \frac12 \left( {\frac{\EnerPts(z, \ell)}{(N\ell^2)^\hal} + (N\ell^2)^\hal} \right),
\end{equation*}
(see e.g. \cite[Lemma B.5]{armstrong2019local} for a precise statement), and the result follows from an application of the
``local laws'' as presented in the previous section, see e.g.~\eqref{loc_laws}. 
\end{proof}

If $\varphi$ is assumed to have more regularity, the results of \cite{LebSerCLT,bauerschmidt2019two,serfaty2020gaussian} give a much better estimate on the exponential moments of $\LN(\varphi)$, but they are only stated \emph{function-wise}. The method of this paper, which relies on those results, yields a \emph{uniform} control for the fluctuations of a certain class of $C^2$ linear statistics; see Corollary~\ref{cor:UB} and Proposition~\ref{prop:unibound} below.

\subsection{Re-writing the Laplace transform}
\corT{When computing exponential moments of fluctuations, our first step will always be the following decomposition into a simple term akin to a variance, and a certain ``ratio of partition functions'' associated to two closely related 2DOCP's.}
\begin{lemma}[Laplace transforms as ratio of partition functions]
\label{lem:rewrite_Laplace}
Let $\varphi$ be a $C^2$ function whose Laplacian is supported in $\Dd$ and  satisfies $\int \Delta\varphi=0$. Let $t, s$ be such that:
\begin{equation}
\label{condi:t}
|t| \leq \beta N |\varphi|^{-1}_{\2}, \quad s := \frac{-t}{2\pi N \beta}.
\end{equation}
Let $\mms$ be the probability measure on $\Dd$ with density $\rms := \rm_0 + s \Delta \varphi$. The following identity holds:
\begin{equation*}
\Esp \left[ e^{t \LN(\varphi)} \right] = \exp\left( \frac{t^2}{4\pi \beta} \int_{\R^2} - \varphi \Delta \varphi \right) \frac{\KNbeta(\mms)}{\KNbeta(\mm_0)}.
\end{equation*}
Of course, if $\nabla \varphi$ is compactly supported then one can integrate by parts and write the ``variance'' term $\int_{\R^2} - \varphi \Delta \varphi$ as $\int_{\R^2} |\nabla \varphi|^2$.
\end{lemma}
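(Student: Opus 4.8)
The plan is to absorb the factor $e^{t\LN(\varphi)}$ into a perturbation of the background measure, exploiting the fact that $-\log|\cdot|$ is (a multiple of) the Coulomb kernel in dimension two. First I would set $\mu := \bXN - N\mm_0$, so that $\F(\XN,\mm_0) = \hal\iint_{\diagc}-\log|x-y|\,\d\mu(x)\,\d\mu(y)$ and $\int\varphi\,\d\mu = \LN(\varphi)$ by definition of $\LN$. Since the density of $\mms$ is $\rms = \rm_0 + s\,\Delta\varphi$, one has $\bXN - N\mms = \mu - Ns\,\Delta\varphi\,\d x$, and I would expand the quadratic form $\F(\XN,\mms)$ along this splitting; this produces $\F(\XN,\mm_0)$, a cross term linear in $s$, and a deterministic term quadratic in $s$. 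Because $\Delta\varphi\,\d x$ is absolutely continuous with bounded density, the removal of the diagonal $\triangle$ is immaterial in the cross and quadratic terms and $-\log|\cdot|$ is integrable there, so no regularization is needed.

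The heart of the computation is the potential-theoretic identity $\int_{\R^2}-\log|x-y|\,\Delta\varphi(y)\,\d y = -2\pi\,\varphi(x)$, which follows from $\Delta\log|\cdot| = 2\pi\,\delta_0$ together with the hypotheses that $\Delta\varphi$ is supported in $\Dd$ and $\int\Delta\varphi = 0$ (the mass-zero condition guarantees that the logarithmic potential of $\Delta\varphi$ decays at infinity, hence agrees with $-2\pi\varphi$ up to an additive constant, which contributes nothing once integrated against $\mu$ since $\mu$ has total mass zero). Feeding this in, the cross term becomes $2\pi Ns\int\varphi\,\d\mu = 2\pi Ns\,\LN(\varphi)$ and the quadratic term becomes $\pi N^2 s^2\int_{\R^2}-\varphi\,\Delta\varphi$, so that, configuration by configuration,
\[
\F(\XN,\mm_0) = \F(\XN,\mms) - 2\pi Ns\,\LN(\varphi) - \pi N^2 s^2\int_{\R^2}-\varphi\,\Delta\varphi,
\]
while the confinement term $2N\sum_i\zeta(x_i)$ is untouched because $\mms = \mm_0$ outside $\Dd$.

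Finally, choosing the coupling $s = -t/(2\pi\beta N)$ exactly as in \eqref{condi:t} makes the two linear-in-$\LN(\varphi)$ contributions cancel, and since $\pi\beta N^2 s^2 = t^2/(4\pi\beta)$ one is left with $t\,\LN(\varphi) - \beta\,\F(\XN,\mm_0) = -\beta\,\F(\XN,\mms) + \tfrac{t^2}{4\pi\beta}\int_{\R^2}-\varphi\,\Delta\varphi$. Integrating $e^{t\LN(\varphi)}$ against $\d\PNbeta$ and pulling out this deterministic factor then identifies the remaining integral as $\KNbeta(\mms)$, which is the asserted identity. It remains only to check that $\mms$ is an admissible background in the sense required for $\KNbeta(\mms)$ to be defined: it has total mass one (as $\int\Delta\varphi = 0$), is supported in $\Dd$, and the constraint \eqref{condi:t} forces $\|s\,\Delta\varphi\|_\infty$ to be a small fraction of $\rm_0\equiv 1/\pi$ on $\Dd$, so $\rms$ stays continuous and bounded below by a positive constant on the interior of $\Dd$. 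I expect the main thing needing care to be the bookkeeping around the diagonal exclusion and the precise form of the kernel identity (in particular the role of $\int\Delta\varphi = 0$ and the decay/normalization of $\varphi$); the rest is algebra.
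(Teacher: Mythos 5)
Your computation is the standard ``shift the background measure'' argument, which is precisely the elementary manipulation the paper invokes (it gives no details, citing \cite[Sec.~2.6]{LebSerCLT} and older references): expanding $\F(\XN,\mms)$ around $\F(\XN,\mm_0)$, cancelling the term linear in $\LN(\varphi)$ through the choice $s=-t/(2\pi N\beta)$, and identifying the remaining deterministic factor as $\exp\bigl(\tfrac{t^2}{4\pi\beta}\int-\varphi\Delta\varphi\bigr)$; the bookkeeping around the diagonal and the constants are correct.

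One caveat concerns your justification of the key identity $\int_{\R^2}-\log|x-y|\,\Delta\varphi(y)\,\d y=-2\pi\varphi(x)+\mathrm{const}$. The difference between $\varphi$ and the logarithmic potential of $\Delta\varphi$ is harmonic on all of $\R^2$, and the decay of the potential (from $\int\Delta\varphi=0$) does \emph{not} by itself force this harmonic function to be constant: you also need $\varphi$ to grow sublinearly at infinity, and then conclude by Liouville. With only the hypotheses as literally stated ($\varphi\in C^2$, $\Delta\varphi$ supported in $\Dd$, $\int\Delta\varphi=0$) the identity --- and in fact the lemma itself --- can fail: for $\varphi(x)=x_1$ the right-hand side of the lemma equals $1$ while $\LN(x_1)=\sum_i (x_i)_1$ is a nondegenerate symmetric random variable, so its Laplace transform exceeds $1$. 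This is an implicit assumption in the statement rather than a defect specific to your argument: every $\varphi$ to which the lemma is applied in the paper (differences of logarithmic potentials of equal total mass, such as $\fze$) is bounded at infinity, so the harmonic difference is constant and, as you correctly note, the constant is harmless because $\bXN-N\mm_0$ has total mass zero. Adding that one sentence (sublinear growth plus Liouville) closes the gap; the rest of your proof is fine.
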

\begin{proof}
This follows from elementary manipulations that can be found e.g. in \cite[Section 2.6]{LebSerCLT}. It is, however, a much older idea from \cite{johansson1998fluctuations} or \cite[Sec 7.2]{ameur2011fluctuations} and the references therein.
\end{proof}

\subsection{Comparison of partition functions}
\newcommand{\tell}{\tilde{\ell}}
\newcommand{\tf}{\tilde{f}}
\corT{Note that in Lemma \ref{lem:rewrite_Laplace} the density of the ``perturbed measure'' $\mm_s$ is obtained by $\rms := \rm_0 + s \Delta \varphi$ and that clearly if $\varphi$ is compactly supported then $\int \Delta \varphi = 0$ (so in particular $\mm_s$ is again a probability measure). Most of the analysis of \cite{LebSerCLT,bauerschmidt2019two} is devoted to obtaining good asymptotics for the ratio of the partition functions associated to $\mm_s$ and $\mm$. One purpose of the next proposition is to extend this analysis to the slightly more general case where $\rms$ is obtained from $\rm_0$ by adding some perturbation $f$ such that $\int f = 0$ - without $f$ necessarily “coming” as the Laplacian of a compactly supported test function. This is of course mostly a re-writing exercise and not a major modification. Another purpose is to package together results that are written separately in the literature - in particular we incorporate Serfaty's ``smallness of the anisotropy'' trick \corO{(see \cite{serfaty2020gaussian})}
in order to have one single ``ready-to-use'' statement. }

\begin{proposition}[Main comparison result]
\label{prop:compar1}
Let $\ell$ be a lengthscale and let $z\in \Dd$ such that $(z,{\ell})$ satisfies \eqref{condiell}. Let $\Cc$ be some positive constant. Let $\mm$ be a probability measure on $\Dd$, such that its density $\rm$ is \corT{of class $C^3$, with}:
\begin{equation}
\label{assum-Main-non-bound-m}
\hal \leq \rm \leq 2, \quad  |\rm|_\kk \leq \Cc \ell^{-k} \text{ for } \kk = 1, 2, 3.
\end{equation}
Moreover, let $f$ be a function of class $C^2$ supported in $\Dd(z, \ell)$, such that:
\begin{equation}
\label{assum-Main-non-bound-f}
\int_{\R^2} f = 0, \quad |f|_\kk \leq \Cc \ell^{-(\kk+2)} \text{ for } \kk = 0, 1, 2.
\end{equation}

Then there exists a constant $\Cc'$ depending only on \corO{$\Cc$, $\beta$ and $r$} such that the following holds. For all $s \in \R$ such that:
\begin{equation}
\label{eq:condi_s}
|s| \leq \frac{1}{\Cc'} \ell^{3/2} N^{-1/4},
\end{equation}
let $\mms$ be the probability measure with density $\rms := \rm + s f$.  We have:
\begin{equation}
\label{eq:compar1}
\log \frac{\KNbeta(\mms)}{\KNbeta(\mm)}
= N \left(\frac{\beta}{4} - 1 \right) \left( \EE(\rms) - \EE(\rm) \right)  +  \corO{\Cc'}\corG{s} N^{3/4} \ell^{-1/2} \left(1 + \log\left(N \ell^2 \right) \right)^\hal.
\end{equation}
\end{proposition}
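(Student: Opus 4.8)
The plan is to use the transportation method, following \cite{LebSerCLT,serfaty2020gaussian} and adapting it to the present localized setting (equivalently, one may first recast the ratio as a Laplace transform of a smooth, scale-$\ell$, non-compactly-supported linear statistic via Lemma~\ref{lem:rewrite_Laplace}, and then estimate that transform the same way). The first ingredient is a \emph{localized transport map}. Since $f$ is $C^2$, supported in $\Dd(z,\ell)$ and has $\int_{\R^2}f=0$, I would solve $\div(\rm\,\psi)=-f$ with a vector field $\psi$ of class $C^2$ supported in $\Dd(z,\ell)$, produced by a Bogovskii-type solution operator (which gains one power of $\ell$ over $f$); combined with \eqref{assum-Main-non-bound-m}--\eqref{assum-Main-non-bound-f} this yields $|\psi|_{\kk}\preceq\ell^{-\kk-1}$ for $\kk=0,1,2$ (constants depending on $\Cc,\beta$). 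One then lets $(T_s)_s$ be a flow with $T_s=\id$ outside $\Dd(z,\ell)$, built from $\psi$ (e.g. the time-$s$ flow of a suitably $s$-dependent rescaling of $\psi$), arranged so that $(T_s)_\#\mm=\mms$ \emph{exactly}; the smallness condition \eqref{eq:condi_s}, which in particular forces $|s|\,|\nabla\psi|_{\0}\ll 1$, guarantees $T_s$ is a $C^2$-diffeomorphism of $\Dd(z,\ell)$ with $|T_s-\id|_{\kk}\preceq|s|\,\ell^{-\kk-1}$ and $|\log\det DT_s|_{\kk}\preceq|s|\,\ell^{-\kk-2}$.

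Next, performing the change of variables $y_i\rightsquigarrow T_s(y_i)$ in the integral \eqref{def:KNbetamu} defining $\KNbeta(\mms)$, and using that $\zeta\equiv 0$ on $\Dd$ while $T_s$ is the identity outside $\Dd(z,\ell)\subset\Dd$ (so the confinement term is unchanged), one obtains
\[
\frac{\KNbeta(\mms)}{\KNbeta(\mm)}=\Esp_{\PNbetamu}\!\left[\exp\!\Big(\sum_{i=1}^N\log\det DT_s(x_i)-\beta\big(\F(T_s(\XN),\mms)-\F(\XN,\mm)\big)\Big)\right].
\]
Since $(T_s)_\#\mm=\mms$, so that $(T_s)_\#(\d\bXN-N\d\mm)=\d(T_s(\bXN))-N\d\mms$, changing variables inside the double integral turns the energy difference into the \emph{anisotropy}
\[
\mathrm{Anis}_s(\XN):=\F(T_s(\XN),\mms)-\F(\XN,\mm)=\hal\iint_{\diagc}\log\frac{|x-y|}{|T_s(x)-T_s(y)|}\,(\d\bXN-N\d\mm)(x)(\d\bXN-N\d\mm)(y),
\]
whose kernel is symmetric, supported on $\{x\in\Dd(z,\ell)\}\cup\{y\in\Dd(z,\ell)\}$, and of size $O(|s|)$.

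I would then split $\sum_i\log\det DT_s(x_i)-\beta\,\mathrm{Anis}_s$ into a \emph{deterministic main term} and a \emph{fluctuation term}. The deterministic part combines $N\int_{\R^2}\rm\log\det DT_s$ --- which by the exact identity $\EE(\rms)=\EE(\rm)-\int_{\R^2}\rm\log\det DT_s$ (a consequence of $(T_s)_\#\mm=\mms$) equals $-N(\EE(\rms)-\EE(\rm))$ --- with the deterministic, appropriately renormalized part of $-\beta\,\mathrm{Anis}_s$ (the $N^2\,\d\mm\otimes\d\mm$ piece and the diagonal self-interaction of the $\d\bXN\otimes\d\bXN$ piece, the latter handled through the normalization of $\FNl$ by $-\tfrac14 n\log N$ in \eqref{def:EnerPts}). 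Carrying out this computation --- Taylor-expanding the kernel in $s$ and in the displacement, and using the bounds from the first step together with $\int f=0$ to annihilate the leading moments --- one finds the deterministic part equals $N(\tfrac\beta4-1)(\EE(\rms)-\EE(\rm))$ (the $-1$ coming from the Lebesgue Jacobian, the $\tfrac\beta4$ from the scaling of the renormalized logarithmic energy, cf. \cite{sandier20152d,serfaty2020gaussian}, in agreement with the known bias of the 2D Coulomb gas CLT) up to a deterministic remainder of size $|s|\,O(N^{3/4}\ell^{-1/2})$. It then remains to prove $\log\Esp_{\PNbetamu}[\exp(\mathrm{Fluct}_s)]=|s|\,O\!\big(N^{3/4}\ell^{-1/2}(1+\log(N\ell^2))^{1/2}\big)$, where $\mathrm{Fluct}_s$ collects the linear statistic $\LN^{\mm}(\log\det DT_s)$, the cross term $\beta N\!\iint\!\log\tfrac{|x-y|}{|T_s(x)-T_s(y)|}\,\d\bXN(x)\d\mm(y)$ recentered by its expectation, and the centered renormalized $\d\bXN\otimes\d\bXN$ contribution. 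After expanding the kernel in $s$, the first two are $\pm\beta N s$ times linear statistics $\LN^{\mm}$ of fixed $C^2$ functions supported near $\Dd(z,\ell)$ (built from $\psi$ and from $x\mapsto\int\psi(x)\cdot\nabla\log|x-y|\,\d\mm(y)$), controlled by the sharp exponential-moment estimates for smooth linear statistics of \cite{serfaty2020gaussian}; the remaining quadratic-in-$\bXN$ contributions are controlled by the local laws \eqref{loc_laws} for $\EnerPts(z,\ell)$ (and \eqref{nbest} for the point count), after checking that the coefficient multiplying $\EnerPts(z,\ell)$ is $O(|s|\,\ell^{-2})$ --- small enough, by \eqref{eq:condi_s}, to lie within the admissible range of $t$. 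Summing the contributions while tracking the joint $(s,\ell)$-scaling --- the factor $(1+\log(N\ell^2))^{1/2}$ reflecting the logarithmic growth of the relevant $H^1$-type/energy norms down to the microscopic scale $\ell\simeq N^{-1/2}$ --- and noting that the $s^2$-part of $N(\tfrac\beta4-1)(\EE(\rms)-\EE(\rm))$ is itself $\preceq|s|\,N^{3/4}\ell^{-1/2}$ in the range \eqref{eq:condi_s} (so that, in particular, it is harmless whether one keeps it inside the main term or not), gives \eqref{eq:compar1}.

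The main obstacle is precisely the sharp extraction of the deterministic term: one must control every lower-order contribution --- the deterministic Taylor remainders, the $N^2$-order terms which cancel only after combining the $\d\mm\otimes\d\mm$, $\d\bXN\otimes\d\mm$ and diagonal pieces, and the genuine fluctuations of both the smooth linear statistic and the renormalized local energy --- uniformly down to $\ell\simeq N^{-1/2}$, which is where the estimates are tightest and where one genuinely needs the fine transportation estimates of \cite{serfaty2020gaussian} rather than the crude energy bound of Lemma~\ref{lem_fluct_uniform}. A secondary technical point is that the natural test function here (the Newtonian potential of $f$) is not compactly supported, so the results of \cite{serfaty2020gaussian} have to be applied after a truncation whose cost must be absorbed into the error.
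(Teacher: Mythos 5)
Your overall architecture (transport map built from $-\div(\rm\psi)=f$, change of variables, identification of the Jacobian with the entropy difference, splitting off an anisotropy-type term, second-order errors controlled by the local laws) matches the paper's proof, and the exact-flow versus $\id+s\psi$ approximate-transport choice is a harmless variant. However, there is a genuine gap at the decisive step: your treatment of the quadratic-in-$\bXN$ (anisotropy) contribution. You propose to bound it by applying the local laws \eqref{loc_laws} with coefficient $O(|s|\ell^{-2})$ multiplying $\EnerPts(z,\ell)$; since $\EnerPts(z,\ell)$ is of order $N\ell^2$ in exponential moments, this only yields an error $O(|s|N)$, and likewise your claim that the "deterministic extraction" leaves a remainder $|s|\,O(N^{3/4}\ell^{-1/2})$ is asserted rather than proved --- the anisotropy is a genuinely random quadratic functional whose configuration-wise size is $\ell^{-2}O(\EnerPts)$, cf. \eqref{boundAni}, i.e. of order $N$. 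But $|s|N$ is strictly weaker than the claimed error $s\,O\bigl(N^{3/4}\ell^{-1/2}(1+\log(N\ell^2))^{1/2}\bigr)$ at every mesoscopic scale (the two only coincide at $\ell\asymp N^{-1/2}$), and this mesoscopic gain is exactly what the proposition is for (it is what makes the errors $o(1)$ in the GMC/lower-bound argument). No direct application of \eqref{loc_laws} or of the smooth-linear-statistics CLT can produce it, because the $\bXN\otimes\bXN$ part of the anisotropy is not a linear statistic.

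The missing idea is what the paper calls ``Serfaty's trick'' (Lemma \ref{lem:AniSmall}): one obtains the improved exponential-moment bound on the anisotropy \emph{indirectly}, by writing the same ratio of partition functions in two ways --- once via the transport identity \eqref{compa_transport}, and once via an independent, screening-based relative expansion of partition functions for measures coinciding outside $\Dd(z,\ell)$ (Lemma \ref{lem:relative_expansion}, i.e. \cite[Prop.~6.4]{serfaty2020gaussian}, which gives \eqref{compa_relat} with error $(N\ell^2)^{1/2}(1+\log(N\ell^2))^{1/2}$). Equating the two at the \emph{largest} admissible value $s=\sstar=\frac{1}{\Cc'}\ell^{3/2}N^{-1/4}$, then removing the $\sstar^2\ErrorF$ term by Cauchy--Schwarz and the local law, and finally interpolating down to smaller $|s|$ by H\"older, yields $\log\Esp[\exp(s\Ani)]=O\bigl(sN^{3/4}\ell^{-1/2}(1+\log(N\ell^2))^{1/2}\bigr)$. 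This bootstrap is the reason both for the specific form of the constraint \eqref{eq:condi_s} and for the shape of the error in \eqref{eq:compar1}; without it (or an equivalent input such as \cite[Prop.~6.4]{serfaty2020gaussian} used in this comparative way), your argument proves only the weaker estimate with error $O(|s|N)$.
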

We postpone the proof of Proposition \ref{prop:compar1} to Section \ref{sec:ProofCompar1}. \corT{The proof relies almost entirely on the analysis of \cite{serfaty2020gaussian} but we could not find there a statement reasonably close to our needs.}

\begin{remark}
\label{rem:order_error}
In the statement of Proposition \ref{prop:compar1} we assume that the parameter $s$ is $O\left(\ell^{3/2} N^{-1/4}\right)$, but we eventually apply this result with $s$ of order $N^{-1}$ as in \eqref{condi:t}, which is always a valid choice for $\ell\ge \rho_\beta  N^{-1/2}$. On the other hand, the well-definiteness of $\rms$ as $\rm + s f$ only requires $s$ to be $O(\ell^{-2})$. However, for values of $s$ between $\ell^{3/2} N^{-1/4}$ and $\ell^{2}$ we do not get an interesting estimate.
\end{remark}

In Proposition \ref{prop:compar1}, we assumed
that $(z,{\ell})$ satisfies \eqref{condiell} to avoid the
case where the perturbation is near the boundary \corT{(because local laws are not known to hold near the boundary)}.
\corT{The only case where we can handle a perturbation that intersects the boundary is the one of a ``macroscopic perturbation'' ($\ell=1$), which will be enough for our purposes. This is covered by the following lemma.}

\begin{lemma}[Main comparison - macroscopic case]
\label{lem:compar_nice_bound} Let $f = 2 \pi \left( \chi - \rm_0 \right)$, where $\rm_0$ is the uniform density on $\Dd$ and $\chi$ is a smooth, radially symmetric function which is compactly supported in $\Dd_{r}$ for some \corG{fixed} $r< 1$, such that $\int \chi = 1$ and $|\chi|_{\kk} \leq \Cc$ for $\kk = 0, 1, 2$. Then there exists a constant $\Cc'$ depending only on $\Cc, \beta$ and $r$ such that the following holds: for all $s \in \R$ with $|s| \leq \frac{1}{\Cc'}$, let $\mms'$ be the probability measure with density $\rm'_s := \rm_0 + s f$.  We have:
\begin{equation}
\label{conclu-boundary}
\log \frac{\KNbeta(\mm'_s)}{\KNbeta(\mm_0)} = O(\corG{s}N),
\end{equation}
with an implicit multiplicative constant depending on $\Cc, \beta$ and $r$.
\end{lemma}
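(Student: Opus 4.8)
The plan is to mimic the proof of Proposition \ref{prop:compar1}, but to exploit the fact that here the perturbation is macroscopic ($\ell = 1$), so the error bounds are much easier and we do not need the local laws near the boundary. First I would note that the density $\rm'_s = \rm_0 + s f = (1-2\pi s)\rm_0 + 2\pi s \chi$, which for $|s|$ small enough is a genuine probability density on $\Dd$ (it integrates to $1$ since $\int f = 0$, and it is bounded above and below by positive constants because $\chi$ is bounded and $\rm_0$ is bounded below on $\Dd$). Its derivatives are controlled by a constant depending on $\Cc$. So $\mm'_s$ is an admissible background measure in the sense of Section \ref{sec:Energy}, and $\KNbeta(\mm'_s)$ is well-defined.

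Next I would set up the transportation argument. As in the proof of Proposition \ref{prop:compar1} (which we may take as a black box together with the estimates it relies on), one transports $\mm_0$ onto $\mm'_s$ by a smooth map $T_s = \id + s\psi$, where $\psi$ solves a Poisson-type equation so that $(T_s)_\# \mm_0 = \mm'_s$; since the perturbation $sf$ is smooth and supported in $\Dd_r$ with $r<1$, the vector field $\psi$ is smooth, supported in a fixed compact subset of $\Dd$, and bounded with bounded derivatives by a constant depending on $\Cc$ and $r$. Pushing forward the Gibbs measure $\PNbetamu[\mm_0]$ through $T_s$ and comparing energies, one writes
\[
\log \frac{\KNbeta(\mm'_s)}{\KNbeta(\mm_0)} = \log \Esp_{\PNbetamu[\mm_0]}\left[ \exp\left( -\beta\, \delta\F \right) \right] + (\text{change-of-variables Jacobian term}),
\]
where $\delta\F = \F(T_s(\XN), \mm'_s) - \F(\XN, \mm_0)$. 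The Jacobian term is $N$ times an integral of $\log \det(\id + s\nabla\psi)$ against $\mm_0$, which is manifestly $sO(N)$ since $\psi$ is smooth with bounded derivatives. The energy difference $\delta\F$ expands (as in \cite{serfaty2020gaussian} and the computation behind Proposition \ref{prop:compar1}) into: a deterministic term of size $sO(N)$ coming from the self-interaction of the background and the $N\log N$ bookkeeping (here there is no $N\log N$ discrepancy because $T_s$ is macroscopic and bi-Lipschitz with constants $1 + O(s)$); a linear-statistics term $s \LN^{\mm_0}(g)$ for some fixed smooth compactly supported $g$ depending on $\psi$; and higher-order terms in $s$ that are $s O(N)$ uniformly. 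The only genuinely random contribution to control in exponential moments is the linear statistic $\LN^{\mm_0}(g)$, and since $g$ is $1$-Lipschitz with compact support in $\Dd$ (up to rescaling by a constant), Lemma \ref{lem_fluct_uniform} with $\ell = 1$ gives $\log \Esp[\exp(t \LN(g))] = O(t)$ for $|t|$ below a $\beta$-dependent threshold — note $(N\ell^2)^{1/2} = N^{1/2}$, but in fact for a macroscopic smooth $g$ one even has the sharper $O(1)$ bound; either way it is $o(N)$ and absorbed into $sO(N)$ once multiplied by the allowed range of $s$.

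The main obstacle, as flagged in the text, is the treatment near $\partial\Dd$: the local laws \eqref{loc_laws} are only available under the interior condition \eqref{condiell}, so one cannot simply quote Proposition \ref{prop:compar1} with $\ell = 1$. The resolution is precisely that $\chi$ — hence $f$ and the transport vector field $\psi$ — is supported in $\Dd_r$ with $r < 1$, so the transport map equals the identity in a fixed neighbourhood of the boundary and all the energy bookkeeping takes place in a compact subset of the interior. Concretely, the comparison of $\F(T_s(\XN), \mm'_s)$ with $\F(\XN, \mm_0)$ only involves the configuration through a fixed smooth compactly supported linear statistic plus deterministic background terms; no local energy $\FNl$ or number-of-points estimate near the boundary is needed, which is why the crude global law \eqref{global_law} and the macroscopic fluctuation bound of Lemma \ref{lem_fluct_uniform} suffice. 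Putting these pieces together and using $|s| \leq 1/\Cc'$ to keep all the $s$-expansions convergent yields \eqref{conclu-boundary}.
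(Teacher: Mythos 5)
There is a genuine gap, and it sits exactly at the point you identify as the ``main obstacle.'' You assert that $f$, and hence the transport field $\psi$, is supported in $\Dd_r$, so that the transport equals the identity near $\partial\Dd$ and all the energy bookkeeping happens in a compact subset of the interior. But $f = 2\pi(\chi - \rm_0)$ is \emph{not} supported in $\Dd_r$: the $-2\pi\rm_0$ part is nonzero on all of $\Dd$ (and is discontinuous across $\partial\Dd$), and the perturbed density $\rm_s' = (1-2\pi s)\rm_0 + 2\pi s\chi$ differs from $\rm_0$ in the whole annulus $\Dd\setminus\Dd_r$. A map pushing $\mm_0$ onto $\mm_s'$ must transfer a mass of order $s$ from that annulus into $\Dd_r$, so it cannot be the identity in any neighbourhood of the boundary. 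Consequently your resolution — ``no boundary estimates are needed because everything is interior, so the interior machinery behind Proposition \ref{prop:compar1} applies'' — collapses: if $f$ really were supported in $\Dd_r$ the lemma would just be Proposition \ref{prop:compar1} at scale of order one, and the whole point of this auxiliary lemma (as stated right before it) is that the perturbation reaches $\partial\Dd$, where \eqref{condiell} fails and the local laws \eqref{loc_laws} are unavailable.

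The paper's actual argument handles the boundary differently and more crudely. Using the radial symmetry of both $\rm_0$ and $f$, one writes down an \emph{exact} radial rearrangement $\Phi_s = \id + \alpha_s(x)x$ of $\Dd$ pushing $\mm_0$ onto $\mm_s'$, with $|\alpha_s|_1 \leq s\Cc''$; no approximate-transport step, no anisotropy analysis, and no decomposition into a linear statistic plus higher-order terms is attempted. One then invokes only the first-order, $C^1$-vector-field bound of \cite[Prop.~4.2, (4.6)]{serfaty2020gaussian}, which after the change of variables controls the whole variation of $\log\KNbeta$ by the exponential moment of $s\Cc''\,\EnerPts(\Dd_2)$, i.e. by the \emph{global} energy and the total number of points. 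The number of points is deterministically at most $N$, and the global energy has exponential moments of order $N$ by the global law \eqref{global_law} — this is precisely how one avoids any local law near the boundary, at the acceptable price of the weak bound $sO(N)$. If you want to salvage your write-up, you should replace your support claim by this radial-rearrangement construction and accept that the random contribution is controlled globally (via \eqref{global_law}) rather than through interior fluctuation bounds; also note that the fluctuation term you isolate is not a linear statistic of a fixed smooth $g$ but an energy-type (anisotropy) quantity, which is another reason the crude global bound is the natural tool here.
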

Lemma \ref{lem:compar_nice_bound} is proven along the same lines as Proposition \ref{prop:compar1}, the radial symmetry of both $f$ and $\rm_0$ provides a simplification which allows us to efficiently treat the boundary case (note also that we are aiming at a less precise estimate, compare \eqref{conclu-boundary} with \eqref{eq:compar1}). We give the proof in Section \ref{sec:ProofCompar1}.

The next result builds upon Proposition~\ref{prop:compar1} and treats a situation where the perturbation $f$ is made of two pieces living at different scales.
\begin{proposition}[Comparison with mass transfer between scales]
\label{prop:compar2}
Let $\ellA< \ellB$ be two length scales and let $\zA, \zB \in \Dd$ be two points such that both $(\zA, \ellA)$ and $(\zB, \ellB)$ satisfy \eqref{condiell}.
Let $\Cc$ be some positive constant and let $\mm$ be a probability measure on $\Dd$ such that its density $\rm$ is (\corT{on $\Dd$) of class $C^3$ with}:
\begin{equation}
\label{assum-2-Main-non-bound-m}
\frac{3}{4} \leq \rm \leq \frac{3}{2}, \quad |\rm|_\kk \leq \Cc \ellB^{-\kk} \text{ for } \kk = 1, 2, 3.
\end{equation}
Moreover, let $\fA, \fB$ be two functions of class $C^2$ supported on $\Dd(\zA, \ellA)$ (resp. $\Dd(\zB, \ellB)$), such that, with $f=\fA+\fB$,:
\begin{equation} \label{condclt}
\int_{\R^2}  f  = 0, \quad |\fA|_\kk \leq \Cc \ellA^{-(\kk+2)},\  |\fB|_\kk \leq \Cc \ellB^{-(\kk+2)} \text{ for } \kk = 0, 1, 2.
\end{equation}
{Assume that $\Dd(\zA, \ellA)$ and $\Dd(\zB, \ellB)$ are both contained in $\Dd_r$ for some $r < 1$, so that for $N$ large enough (depending on $r$), condition \eqref{condiell} is satisfied.}
Then there exists a constant $\Cc'$ depending only on $\Cc$ and $\beta$ such that the following holds. For all $s \in \R$ such that:
\begin{equation}
\label{condi:s2}
|s| \leq \frac{1}{\Cc'} \ellA^{3/2} N^{-1/4},
\end{equation}
let $\mms$ be the probability measure with density $\rms := \rm + s f$.
We have:
\begin{equation}
\label{eq:compar2}
\log \frac{\KNbeta(\mms)}{\KNbeta(\mm)}
= N \left(\frac{\beta}{4} - 1 \right) \left( \EE(\rms) - \EE(\rm) \right) +  O \left( s N^{3/4} \ellA^{-1/2} \left(1 + {\log(N \ellA^2)} \right)^\hal \right),
\end{equation}
with a multiplicative constant depending only on $\Cc, \beta$ {and $r$}.
\end{proposition}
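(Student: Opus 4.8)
The plan is to join $\mm$ to $\mms$ by a finite chain of probability measures, each consecutive pair differing by a perturbation satisfying the hypotheses of Proposition~\ref{prop:compar1} at a single length scale, then to apply that proposition to every link and sum. Two features make this work. First, the left-hand sides and the entropy terms both telescope, so that $\log\frac{\KNbeta(\mms)}{\KNbeta(\mm)}$ and $N(\tfrac{\beta}{4}-1)(\EE(\rms)-\EE(\rm))$ appear automatically. Second, provided the chain has only $O(\log N)$ links with scales forming (up to bounded factors) a geometric progression between $\ellA$ and $\ellB$, the accumulated errors $s\,O\big(\sum_{\mathrm{links}} N^{3/4}\ell^{-1/2}(1+\log(N\ell^2))^{\hal}\big)$ are dominated by the contribution of the finest scale $\ellA$: the map $\ell\mapsto \ell^{-1/2}(1+\log(N\ell^2))^{\hal}$ is, on $[\rho_\beta N^{-1/2},1]$, bounded by a constant (depending on $\beta$) times its value at $\ellA$, and the $2^{-j/2}$ prefactors coming from the doubling of scales make the sum essentially geometric.

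First I would reduce to perturbations that are localized at a single scale and have vanishing integral. Set $c := \int_{\R^2}\fA = -\int_{\R^2}\fB$, and note $|c|\le |\fA|_{\0}\,\pi\ellA^2 \le \pi\Cc$ by \eqref{condclt}. Fix smooth bumps $\theta_a$ (resp.\ $\theta_b$) supported in $\Dd(\zA,\ellA)$ (resp.\ $\Dd(\zB,\ellB)$) with unit integral and $|\theta_a|_{\kk}\preceq \ellA^{-(\kk+2)}$, $|\theta_b|_{\kk}\preceq \ellB^{-(\kk+2)}$ (dilates of a fixed profile), and write
\[
f \;=\; (\fB + c\,\theta_b) \;+\; c\,(\theta_a - \theta_b) \;+\; (\fA - c\,\theta_a).
\]
The first and last summands each have vanishing integral, are supported on $\Dd(\zB,\ellB)$ and $\Dd(\zA,\ellA)$ respectively, and satisfy the $C^2$ bounds of Proposition~\ref{prop:compar1} at scales $\ellB$ and $\ellA$ (with a constant $\preceq\Cc$ since $|c|\preceq\Cc$); ordering the chain so that the $\ellB$-scale piece is added first and the $\ellA$-scale piece last, these two contribute links to which Proposition~\ref{prop:compar1} applies directly. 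Throughout the construction the density stays in $[\tfrac12,2]$ with the required derivative bounds, because the total perturbation has size $O(s\,\ellA^{-2})=O(1)$: indeed $|s|\preceq\ellA^{3/2}N^{-1/4}\preceq\ellA^2$ since $\ellA\ge\rho_\beta N^{-1/2}$, and the constraint \eqref{condi:s2} on $s$ is, up to enlarging $\Cc'$, stronger than the one required by Proposition~\ref{prop:compar1} at any scale $\ell\ge\ellA$.

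The heart of the matter is the transfer term $c(\theta_a-\theta_b)$: it has vanishing integral but cannot be treated as a single link, since its support lives at scale $\ellB$ while its regularity lives at scale $\ellA$. I would bridge it by a chain of intermediate bumps $\theta_a=\theta^{(0)},\theta^{(1)},\dots,\theta^{(K)}=\theta_b$, all of unit integral, chosen so that consecutive ones differ by a bounded factor in scale \emph{and} in location; then $c(\theta_a-\theta_b)=c\sum_{j=0}^{K-1}(\theta^{(j)}-\theta^{(j+1)})$ turns each $\theta^{(j)}-\theta^{(j+1)}$ into a legitimate single-scale, vanishing-integral perturbation at a scale $\ell^{(j+1)}\asymp 2^{j+1}\ellA$, supported in a disk contained in $\Dd_r$ and hence satisfying \eqref{condiell} for $N$ large depending on $r$. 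Concretely, one grows the bump around $\zA$ through doubling scales, migrating its centre towards $\zB$ in steps commensurate with the current scale, and shrinks it back onto $\theta_b$ near $\zB$; this uses $K=O(\log N)$ links, all at scales in $[\ellA,\ellB]$ where the scale-$\ellB$ regularity of the (slowly drifting) background remains usable. Summing over the full chain, the left-hand sides telescope to $\log\frac{\KNbeta(\mms)}{\KNbeta(\mm)}$, the entropy terms telescope to $N(\tfrac{\beta}{4}-1)(\EE(\rms)-\EE(\rm))$, and the errors add up to $s\,O\big(N^{3/4}\ellA^{-1/2}(1+\log(N\ellA^2))^{\hal}\big)$ as explained above, giving \eqref{eq:compar2}.

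The technical core is the design of this transfer chain together with the verification that Proposition~\ref{prop:compar1} stays applicable at every link: one must migrate the centre from $\zA$ to $\zB$ without ever needing a move at a scale coarser than $\ellB$ (the background regularity hypothesis only controls $|\rm|_{\kk}$ at scale $\ellB$), while keeping the step-to-step ratios in scale and location bounded, keeping every intermediate disk inside $\Dd_r$, keeping the drifting density in $[\tfrac12,2]$ with the right derivative bounds, respecting the admissible range of $s$, and ensuring the chain has only $O(\log N)$ links so that the errors sum as claimed. When $\Dd(\zA,\ellA)$ and $\Dd(\zB,\ellB)$ are comparable (nested, or with $|\zA-\zB|\preceq\ellB$) this bookkeeping is routine; the general case carries the migration through growing scales, which is where the bulk of the work lies.
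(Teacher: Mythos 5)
Your proposal follows essentially the same route as the paper: interpolate between $\mm$ and $\mms$ through a chain of $O(\log N)$ probability measures whose consecutive differences are zero-mass, single-scale perturbations at geometrically growing scales between $\ellA$ and $\ellB$, apply Proposition~\ref{prop:compar1} to each link, let the partition-function ratios and the entropy terms telescope, and observe that the accumulated errors form an essentially geometric sum dominated by the finest scale $\ellA$. The one place where you deviate is the treatment of the transfer term, and it is precisely there that your variant acquires the difficulty you flag as ``the bulk of the work''. In the paper the auxiliary bump never moves: the intermediate densities are $\rm + s\fB + s\left(\int\fA\right)\chi^{(i)}/\int\chi^{(i)}$, with cut-offs $\chi^{(i)}$ all centered at $\zA$ and of scale $\ell_i = c^i\ellA$ growing up to a top scale comparable to $\ellB$; the piece $\fB$ enters only at the very last link, where $\fB + \left(\int\fA\right)\chi^{(n)}/\int\chi^{(n)}$ (which has zero total mass) is absorbed in a single application of Proposition~\ref{prop:compar1} at that top scale. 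Thus no migration of the centre from $\zA$ to $\zB$ is ever needed, and the ``technical core'' you describe simply does not arise.

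In your version the migration is a genuine obstruction, not bookkeeping, whenever $\ellB$ is mesoscopic and $|\zA-\zB|\gg\ellB$: with moves at scales at most $\ellB$ and displacements commensurate with the current scale you travel only $O(\ellB\log N)$ in $O(\log N)$ links, while using order $|\zA-\zB|/\ellB$ translation steps at scale $\ellB$ inflates the error to $s\,N^{3/4}\ellB^{-3/2}\left(1+\log(N\ellB^2)\right)^{\hal}$, which exceeds the right-hand side of \eqref{eq:compar2} as soon as $\ellA\gg\ellB^{3}$; and moving to scales coarser than $\ellB$ is not covered by the background regularity \eqref{assum-2-Main-non-bound-m}, as you note. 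So as written your plan closes only when $|\zA-\zB|\preceq\ellB$ or $\ellB\asymp 1$ (the only regime in which the proposition is actually invoked in the paper). The fix is simply to drop the migration and use the fixed-centre chain above, merging with $\fB$ once, at the largest scale --- with the caveat that this final merge, in the paper as well, is only fully transparent when the top scale is comparable to $|\zA-\zB|+\ellB$, e.g.\ when $\ellB$ is of order one.
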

We postpone the proof of Proposition \ref{prop:compar2} to Section \ref{sec:ProofCompar2}. \corT{It uses the same techniques as in \cite{serfaty2020gaussian}, but the statement is new.}
\begin{remark}
It is important to observe that in the conclusions of Proposition~\ref{prop:compar1} (resp. Proposition~\ref{prop:compar2}), if we work at the \emph{microscopic} scale $\ell = {\rho_\beta} N^{-1/2}$ (resp. $\ellA = {\rho_\beta} N^{-1/2}$), then for $s$ of order $N^{-1}$ (which will be our choice later on) the error term \corT{(the last term in \eqref{eq:compar1} and \eqref{eq:compar2})} is $O(1)$ \corG{as expected}, whereas as soon as the  length scale $\ell$ (resp. $\ellA$) is \emph{mesoscopic} there is a gain and the error term becomes $o(1)$.
\end{remark}

{As we record in the next remark,}
Proposition~\ref{prop:compar1} yields the classical CLT for (arbitrary smooth mesoscopic) test functions supported inside $\Dd$, see \cite{LebSerCLT} and \cite{bauerschmidt2019two} for the original results.
{A CLT-like precision will be required} to show that the exponential of a regularization of $\Pot$ converges to a certain GMC measure in Section~\ref{sec-LB}, which in turn is instrumental in obtaining a lower-bound for the maximum of $\Pot$.

\begin{remark}\label{rk:clt}
Let $\varphi \in C^4(\R^2\to\R)$ be a function (possibly depending on $N$) and assume that $\Delta \phi =f$, where $f$ {is as in Proposition \ref{prop:compar1} or Proposition \ref{prop:compar2}}.
Then by combining Lemma~\ref{lem:rewrite_Laplace} and Propositions~\ref{prop:compar1} or \ref{prop:compar2}, we get:
\begin{equation*}
\Esp \left[ e^{\LN(\varphi)} \right] = \exp\left( \frac{-1}{4\pi \beta} \int_{\R^2}  \varphi f\right) \exp\left(N \left(\tfrac{\beta}{4} - 1 \right)
\corO{(\EE(\mm)-\EE(\mm_0))} + o(1)
\right)
\end{equation*}
where $\mm =\mm_0 - \frac{f}{2\pi N \beta}$. Here we used that \corT{the uniform measure $\mm_0$ does satisfy} the conditions \eqref{assum-Main-non-bound-m}
and \eqref{assum-2-Main-non-bound-m}.
Moreover, since $f$ is supported in $\Dd$ with $\int f = 0$ and $ |f|_0 \leq \Cc \ell^{-2}$, we have
\[
\EE(\mm) \corO{-\EE(\mm_0)} = \int_{\R^2} \big(\corO{\tfrac1\pi} - \tfrac{f(x)}{2\pi N \beta}\big)\log\big(\corO{\tfrac1\pi}- \tfrac{f(x)}{2\pi N \beta}\big)
\d x-\corO{\tfrac1\pi \log\tfrac1\pi}
= o(N^{-1}).
\]
{In particular,
$\LN(\varphi)$ converges in distribution {to a Gaussian random variable} with mean 0 and variance $\frac{1}{2\pi \beta} \int_{\R^2} |\nabla \varphi|^2$ (this last expression follows by an integration by parts).}
\end{remark}

\section{Law of large numbers: upper bound}
\label{sec-UB}
The goal of this section is to prove the upper bound part of Theorem \ref{theo:main}, namely:
\begin{proposition}
\label{prop:UB}
Recall {the definition \eqref{def:PotXN} of $\Pot$}.
For all fixed $r \in (0,1)$ and all $\alpha > 1$, we have:
\begin{equation}
\label{eq:LLNUB}
\lim_{N \to \infty} \PNbeta \left( \max_{z \in \Dd(0,r)} \Pot(z) \geq \frac{\alpha \log N}{\sqrt{\beta}} \right) = 0.
\end{equation}
\end{proposition}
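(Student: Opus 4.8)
The plan is to realize $\Pot(z)$, for $z$ in the interior disk, as a linear statistic $\LN(\psi_z)$ of a smooth test function with fixed, compactly supported, mean-zero Laplacian --- up to a deterministic $O(1)$ error and an auxiliary centering term --- and then to combine the sharp exponential-moment asymptotics of Lemma~\ref{lem:rewrite_Laplace} and Proposition~\ref{prop:compar2} with Chebyshev's inequality and a union bound. Fix $r\in(0,1)$ and $\alpha>1$; since the event in \eqref{eq:LLNUB} is decreasing in $\alpha$ we may assume $\alpha<2$. Fix $\alpha'\in(1,\alpha)$ and $\delta\in\big(0,\tfrac12(\alpha'^2-1)\big)$.

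\emph{Regularization and centering.} Let $\rho$ be a fixed smooth radial probability density supported in $\Dd$, set $\rho_\ell:=\ell^{-2}\rho(\cdot/\ell)$ with $\ell:=K\rho_\beta N^{-1/2}$ for a large constant $K=K(\alpha,\beta)$ fixed below, and put $\tlogz:=\log|z-\cdot|\ast\rho_\ell$. By radiality (Newton's theorem) $\tlogz$ coincides with $\log|z-\cdot|$ outside $\Dd(z,O(\ell))$; by the sub-mean-value property $\tlogz\ge\log|z-\cdot|$ everywhere; and $0\le\int\big(\tlogz-\log|z-\cdot|\big)=O(\ell^2)$ (the logarithmic singularity is integrable and the total charge is preserved, so no logarithmic factor survives). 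Hence, uniformly in $z\in\Dd(0,r)$,
\[
\Pot(z)=\LN(\log|z-\cdot|)\le\LN(\tlogz)+N|\rm_0|_{\0}\int\big(\tlogz-\log|z-\cdot|\big)=\LN(\tlogz)+O(N\ell^2)=\LN(\tlogz)+O(1).
\]
Next fix a smooth radial probability density $\chi$ supported in $\Dd_{r'}$ with $r<r'<1$, set $\hh:=\log|\cdot|\ast\chi$ and $\psi_z:=\tlogz-\hh$. Then $\psi_z$ is smooth, $\Delta\psi_z=2\pi\big(\rho_\ell(\cdot-z)-\chi\big)=:f_z$ has zero integral and support in $\Dd(z,\ell)\cup\Dd_{r'}\subset\Dd$, and $|\psi_z|_{\2}=O(\ell^{-2})$ (a Calderón--Zygmund bound for the Hessian of a logarithmic potential). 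Writing $\LN(\tlogz)=\LN(\psi_z)+\LN(\hh)$, the fixed macroscopic statistic $\LN(\hh)$ is controlled a priori by combining Lemma~\ref{lem:rewrite_Laplace} with Lemma~\ref{lem:compar_nice_bound}, which gives $\Esp e^{t_0\LN(\hh)}=e^{O(t_0^2)}$ for every fixed $t_0$, hence $\PNbeta(|\LN(\hh)|\ge\eta\log N)\to0$ for every $\eta>0$ (this is Proposition~\ref{prop:expo_hmu}); it therefore suffices to control $\max_{z\in\Dd(0,r)}\LN(\psi_z)$.

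\emph{Exponential moments and the lattice bound.} Apply Lemma~\ref{lem:rewrite_Laplace} to $\varphi=\psi_z$ together with Proposition~\ref{prop:compar2} with $\rm=\rm_0$ and $f=f_z=f_a+f_b$, where $f_a=2\pi\rho_\ell(\cdot-z)$ lives at scale $\ellA=\ell$ and $f_b=-2\pi\chi$ at a fixed macroscopic scale; the admissibility range for $s$ in \eqref{condi:s2} (equivalently for $t$, via $s=-t/(2\pi N\beta)$) then contains $[0,2\alpha'\sqrt\beta]$ once $K$ is large enough. This gives, uniformly in $z\in\Dd(0,r)$ and in $t$ on compacts,
\[
\log\Esp e^{t\LN(\psi_z)}=\frac{t^2}{4\pi\beta}\int\!\big(-\psi_z\Delta\psi_z\big)+N\big(\tfrac\beta4-1\big)\big(\EE(\rm_0+sf_z)-\EE(\rm_0)\big)+O\!\big(sN^{3/4}\ell^{-1/2}(1+\log(N\ell^2))^{\hal}\big).
\]
Here the last term is $O(1)$; the entropy difference equals $O\big(\int(sf_z)^2\big)=O(s^2\ell^{-2})=O(N^{-1})$, so the middle term is $O(1)$; and writing $\int(-\psi_z\Delta\psi_z)=-2\pi\iint\log|x-y|\,\d(\mu_z-\nu)(x)\,\d(\mu_z-\nu)(y)$ with $\mu_z,\nu$ the probability measures of densities $\rho_\ell(\cdot-z),\chi$, the three pieces are $\iint\log|x-y|\,\d\mu_z\d\mu_z=\log\ell+O(1)$, $\iint\log|x-y|\,\d\mu_z\d\nu=\hh(z)+o(1)=O(1)$ and $\iint\log|x-y|\,\d\nu\d\nu=O(1)$, so $\int(-\psi_z\Delta\psi_z)=\pi\log N+O(1)$ uniformly in $z\in\Dd(0,r)$. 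Altogether $\log\Esp e^{t\LN(\psi_z)}=\frac{t^2}{4\beta}\log N+O(1)$. Choosing $t=2\alpha'\sqrt\beta$ and applying Chebyshev's inequality at level $T=\alpha'\log N/\sqrt\beta$ yields $\PNbeta(\LN(\psi_z)\ge T)\le N^{-\alpha'^2}e^{O(1)}$, and summing over a lattice $\Lambda_N\subset\Dd(0,r)$ of spacing $N^{-1/2-\delta}$ (so $\#\Lambda_N=O(N^{1+2\delta})$) gives $\PNbeta(\max_{z\in\Lambda_N}\LN(\psi_z)\ge T)=O(N^{1+2\delta-\alpha'^2})\to0$. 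This is the analogue of Lemma~\ref{lat-est}.

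\emph{From the lattice to $\Dd(0,r)$, conclusion, and main obstacle.} For $z\in\Dd(0,r)$ with nearest lattice point $z'(z)$, $\LN(\psi_z)=\LN(\psi_{z'(z)})+\LN(\tlogz-\tlogzp)$ (the $\hh$'s cancel); since $|z-z'(z)|\le N^{-1/2-\delta}\ll\ell$, the increment has mean-zero Laplacian supported in a disk of radius $O(\ell)$ with $\int(-(\tlogz-\tlogzp)\Delta(\tlogz-\tlogzp))=O(|z-z'(z)|^2/\ell^2)=O(N^{-2\delta})$, so Lemma~\ref{lem:rewrite_Laplace} and Proposition~\ref{prop:compar1} at the single scale $\ell$ give $\Esp e^{t\LN(\tlogz-\tlogzp)}=e^{O(1)}$ uniformly; a multi-scale chaining over nested grids then yields $\PNbeta\big(\sup_{z\in\Dd(0,r)}|\LN(\psi_z)-\LN(\psi_{z'(z)})|\ge1\big)\to0$ (Proposition~\ref{prop:lattice}). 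Combining the three events with the deterministic $O(1)$ from regularization gives, for $N$ large, $\PNbeta\big(\max_{z\in\Dd(0,r)}\Pot(z)\ge\tfrac{\alpha\log N}{\sqrt\beta}\big)\le\PNbeta\big(\max_{\Lambda_N}\LN(\psi_{z'})\ge\tfrac{\alpha'\log N}{\sqrt\beta}\big)+\PNbeta\big(\sup_z|\LN(\psi_z)-\LN(\psi_{z'(z)})|\ge1\big)+\PNbeta\big(\LN(\hh)\ge\tfrac{(\alpha-\alpha')\log N}{\sqrt\beta}-C\big)$, and each term tends to $0$. The main obstacle is obtaining the asymptotics $\log\Esp e^{t\LN(\psi_z)}=\frac{t^2}{4\beta}\log N+O(1)$ \emph{uniformly} in $z$ with an $O(1)$ --- not merely $o(\log N)$ --- error: this forces the use of Proposition~\ref{prop:compar2} rather than the soft Lemma~\ref{lem_fluct_uniform}, and forces the regularization scale to be a large constant times $\rho_\beta N^{-1/2}$ so that the near-optimal Chebyshev exponent $t\asymp\sqrt\beta$ lies in the admissible range, while the error term, the entropy correction, the coefficient $\tfrac14$ and the regularization error all stay controlled simultaneously. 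The passage from the lattice to $\Dd(0,r)$ is softer but still requires the multi-scale increment control.
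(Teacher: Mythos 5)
Your overall strategy is the same as the paper's (microscopic regularization by subharmonicity at scale $\asymp N^{-1/2}$, centering by a solution of $\Delta\g=2\pi\chi$, sharp exponential moments via Lemma~\ref{lem:rewrite_Laplace} and Proposition~\ref{prop:compar2}, Chebyshev plus a union bound on a lattice of spacing $N^{-1/2-\delta}$, then an extension to all of $\Dd(0,r)$), and the one-point moment computation, the choice of tilt $t=2\alpha'\sqrt\beta$, and the lattice union bound all match the paper and are fine. However, there are two genuine gaps. The first concerns the centering statistic: you claim that $\Esp e^{t_0\LN(\hh)}=e^{O(t_0^2)}$ follows ``by combining Lemma~\ref{lem:rewrite_Laplace} with Lemma~\ref{lem:compar_nice_bound}'', but Lemma~\ref{lem:rewrite_Laplace} requires $\int\Delta\varphi=0$, while your $\hh=\log|\cdot|\star\chi$ has $\int\Delta\hh=2\pi\neq0$ --- this is exactly the obstruction that motivated the centering in the first place, so it cannot be waved away for $\hh$ itself. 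The pair (Lemma~\ref{lem:rewrite_Laplace}, Lemma~\ref{lem:compar_nice_bound}) only controls $\LN(\hh-\hmu)$, where $\hmu$ is the background potential with $\Delta\hmu=2\pi\mm_0$; to control $\LN(\hh)$ one still needs exponential moments of $\LN(\hmu)$, and this is a separate ingredient that does not come from the transport machinery at all: it is Proposition~\ref{prop:expo_hmu}, proved via the exact scaling identity for $\sum_i|x_i|^2$ (Claim~\ref{claim_scalin}) together with the treatment of the confinement $\zeta$. Your parenthetical citation of Proposition~\ref{prop:expo_hmu} does not close this, since that proposition is about $\hmu$, not $\hh$; the statement you actually need is Corollary~\ref{cor:expo_hmu}, whose proof uses Proposition~\ref{prop:expo_hmu} as an essential extra input.

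The second gap is the lattice-to-disk step. You propose to bound each increment $\LN(\tlogz-\tlogzp)$ via Lemma~\ref{lem:rewrite_Laplace} and Proposition~\ref{prop:compar1} and then chain over nested grids. But at the microscopic scale $\ell\asymp N^{-1/2}$ the admissible tilt is capped at $t=O(1)$: condition \eqref{eq:condi_s} forces $|s|\preceq\ell^{3/2}N^{-1/4}\asymp N^{-1}$, i.e. $|t|=2\pi\beta N|s|=O(1)$, and the error term in \eqref{eq:compar1} is itself of size $O(t)$. With increment variance $O(N^{-2\delta})$ and a bounded tilt, Markov gives a tail at level $1$ that is merely a constant, not small, so no union bound over the $\asymp N^{1+2\delta}$ lattice cells --- let alone over nested sub-grids inside each cell --- can succeed; the chaining as sketched fails quantitatively. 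The paper's Proposition~\ref{prop:lattice} uses a different mechanism: the increment is cut into pieces on dyadic annuli, each piece is of small Lipschitz size $\asymp\elld\ell_i^{-2}$, so after rescaling to a $\tfrac1{\ell_i}$-Lipschitz function one may take a polynomially large tilt $\asymp N^{\delta/2}$ in the \emph{uniform} Lipschitz-class energy bound of Lemma~\ref{lem_fluct_uniform}; this yields stretched-exponential tails $e^{-N^{\delta/2}}$ that survive the union bound over $\Latt$, and since the supremum over the Lipschitz class sits inside the expectation, the continuum of points $z$ within a cell is handled without any further union bound. One also needs the separate large-deviation estimate for outliers far outside $\Dd$, because $\tlogz-\tlogzp$ is not compactly supported; your sketch omits this as well.
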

{In the rest of this section we fix some $r < 1$.}

\subsection{An auxiliary linear statistics}
\label{subsec-center}
For $z \in \Dd$, let $\logz : x \mapsto \log |z-x|$. The value $\Pot(z)$ of the Coulomb gas potential at $z$ corresponds to the fluctuations $\LN(\logz)$ of the linear statistics associated to $\logz$, see \eqref{def:fluctuations}.
An important technical observation is that $\int \Delta \logz = \corT{2\pi}$ so that (even after a mesoscopic regularization), one cannot directly apply Proposition~\ref{prop:compar1} to control the exponential moments of $\LN(\logz)$.
To fix this issue, we can consider instead the fluctuation of the test function $\logz-\g$ where $\g$ is some nice function with $\int \Delta \g =1$. \corT{Of course, one should be able to say something about the fluctuations $\LN(\g)$.}
It is particularly convenient to make the following choice: let $\chi$ be a radially symmetric smooth function (independent of $N$) supported in $\Dd_r$ \corT{with $\int \chi = 1$}, and let
\begin{equation}\label{eq:g}
\text{$\g$ be a solution of Poisson's equation $\Delta \g = 2\pi \chi$}.
\end{equation}
Then, the following estimate shows that the fluctuations of $\LN(\g)$ are negligible compared to the maximum of $\Pot$.

\begin{proposition}
\label{prop:expg}
One has {(for $N$ large enough depending on $\beta, r$)}
\begin{equation*}
\PNbeta \left[ |\LN(\g)| \geq (\log N)^{0.8} \right] \leq \exp\left( - \hal (\log N)^{1.5} \right).
\end{equation*}
\end{proposition}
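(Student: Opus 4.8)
The plan is to apply the comparison result of Proposition~\ref{prop:compar1} together with the re-writing of the Laplace transform (Lemma~\ref{lem:rewrite_Laplace}) to the single test function $\g$, and then conclude by Chebyshev's inequality. First I would check that $\g$ meets the hypotheses of Lemma~\ref{lem:rewrite_Laplace}: by construction $\Delta \g = 2\pi\chi$ is supported in $\Dd_r \subset \Dd$, and $\int \Delta \g = 2\pi \int \chi = 2\pi \neq 0$, so strictly speaking we must first \emph{center} $\g$. The natural fix is to observe that $\frac{1}{2\pi}\Delta\g = \chi$ is a probability density, so $\g - \g'$ has Laplacian of total mass zero if $\g'$ solves $\Delta \g' = 2\pi \rm_0 \indic_{\Dd}$ --- but more simply, one should use the macroscopic comparison Lemma~\ref{lem:compar_nice_bound}, whose hypotheses are tailored exactly to $f = 2\pi(\chi - \rm_0)$ with $\chi$ radially symmetric, smooth, compactly supported in $\Dd_r$, and of unit mass; these are precisely the properties we may assume for $\chi$ in \eqref{eq:g} (if $\chi$ is not yet assumed radial and of unit mass, renormalize and symmetrize it, which only changes $\g$ by an irrelevant harmonic-in-$\Dd_r$ additive term). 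Then $\LN(\g) = \LN(\g - c) = \LN^{\mm_0}(\varphi)$ with $\Delta\varphi = f$ having zero mass, and Lemma~\ref{lem:rewrite_Laplace} applies with $s = -t/(2\pi N \beta)$ and $\mms = \mm'_s$ in the notation of Lemma~\ref{lem:compar_nice_bound}.

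Next, combining Lemma~\ref{lem:rewrite_Laplace} with Lemma~\ref{lem:compar_nice_bound} yields, for $|t| \leq \beta N |\g|_\2^{-1}$ (so that $|s| \leq 1/\Cc'$ holds once $N$ is large, since $s \asymp t/N$),
\begin{equation*}
\log \Esp\left[ e^{t \LN(\g)} \right] = \frac{t^2}{4\pi\beta} \int_{\R^2} |\nabla \g|^2 + s\, O(N) = O(t^2) + O(t),
\end{equation*}
using $s \asymp t/N$ so that $sO(N) = O(t)$, and noting $\int |\nabla\g|^2$ is a fixed finite constant depending only on $\chi$. Thus for $|t|$ bounded by a fixed constant, $\log \Esp[e^{t\LN(\g)}] \leq C t^2$ for some $C = C(\beta, \chi, r)$, and the same holds for $-t$ by symmetry of the argument (apply the bound to $-\g$). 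I would then run the standard sub-Gaussian tail estimate: for $\lambda = (\log N)^{0.8}$,
\begin{equation*}
\PNbeta\left( |\LN(\g)| \geq \lambda \right) \leq 2 \exp\left( -t\lambda + C t^2 \right),
\end{equation*}
and optimizing (or simply taking $t = t_0$ a small fixed constant, legitimate since $\lambda = (\log N)^{0.8} \to \infty$ but $t_0$ stays bounded) gives $\PNbeta(|\LN(\g)| \geq \lambda) \leq 2\exp(-t_0 (\log N)^{0.8} + C t_0^2)$. This is far stronger than needed but is not of the claimed form $\exp(-\hal (\log N)^{1.5})$; to reach the stated exponent one simply takes $t$ \emph{growing}: with $t = (\log N)^{0.7}$ (which satisfies $|t| \leq \beta N |\g|_\2^{-1}$ for $N$ large) one gets $-t\lambda + Ct^2 = -(\log N)^{1.5} + C(\log N)^{1.4} \leq -\hal(\log N)^{1.5}$ for $N$ large, as desired.

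The only genuine point requiring care --- and the main (mild) obstacle --- is verifying that the range of $t$ in which the Laplace-transform identity and the comparison estimate are valid actually reaches up to $t \asymp (\log N)^{0.7}$. This needs: (i) $|t| \leq \beta N |\g|_\2^{-1}$, which holds comfortably since $|\g|_\2$ is a fixed constant and $N \gg (\log N)^{0.7}$; and (ii) the constraint $|s| \leq 1/\Cc'$ from Lemma~\ref{lem:compar_nice_bound}, which with $s = -t/(2\pi N\beta)$ reads $|t| \lesssim N$, again comfortably satisfied. A secondary subtlety is ensuring $\g$ is genuinely smooth with controlled derivatives: since $\chi \in C^\infty_c(\Dd_r)$, elliptic regularity gives $\g \in C^\infty$ with $|\g|_\2 = O(1)$ (the Newtonian potential of a fixed smooth compactly supported density), and $\nabla\g$ decays like $1/|x|$ at infinity but $\int|\nabla\g|^2 < \infty$ because the total charge $\int \chi = 1$ is exactly cancelled in the centered test function $\varphi$, whose gradient is $O(|x|^{-2})$ at infinity; concretely one applies the integration-by-parts remark after Lemma~\ref{lem:rewrite_Laplace} to $\varphi = \g - c$ rather than to $\g$ itself. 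With these checks in place the proposition follows immediately.
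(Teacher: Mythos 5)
There is a genuine gap at the centering step. You correctly identify that $\int \Delta\g = 2\pi\int\chi \neq 0$, so Lemma~\ref{lem:rewrite_Laplace} cannot be applied to $\g$ directly, but your fix does not work: subtracting a \emph{constant} $c$ leaves the Laplacian unchanged, so $\g-c$ still fails the hypothesis, and the identity ``$\LN(\g)=\LN(\g-c)=\LN^{\mm_0}(\varphi)$ with $\Delta\varphi=f$ of zero mass'' is false. To obtain a mass-zero Laplacian one must subtract (a multiple of) the background potential $\hmu$, the solution of $\Delta\hmu = 2\pi\rm_0$ given explicitly in \eqref{def:hmu}, and then
\begin{equation*}
\LN(\g) \;=\; \LN(\g - c\,\hmu) \;+\; c\,\LN(\hmu), \qquad c=\textstyle\int\chi,
\end{equation*}
where $\LN(\hmu)$ is a genuine, nonconstant linear statistic (essentially $\tfrac12\sum_i(|x_i|^2-1)$ corrected by the confinement $\zeta$) which neither Lemma~\ref{lem:rewrite_Laplace} nor Lemma~\ref{lem:compar_nice_bound} can touch, precisely because the mass of $\Delta\hmu$ is not zero. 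Your last paragraph repeats the same confusion: the gradient decay $O(|x|^{-2})$ at infinity holds for $\g-c\hmu$, not for $\g$ minus a constant.

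Controlling $\LN(\hmu)$ is in fact the main content of the paper's proof: Proposition~\ref{prop:expo_hmu} shows $\log\Esp[e^{t\LN(\hmu)}]=O(t+t^2)$ for $|t|\le N\beta/100$, using an \emph{exact scaling identity} for $\Esp[e^{t\sum_i|x_i|^2}]$ (Claim~\ref{claim_scalin}, based on the formulation \eqref{eq:Pnbetav2}) together with the negligibility of the confinement term $\sum_i\zeta(x_i)$ (Claim~\ref{claim_zeta}, via partition-function expansions from \cite{LebSerCLT}); Corollary~\ref{cor:expo_hmu} then combines this with exactly the two ingredients you use for the centered piece $\varphi=\g-c\hmu$ (Lemma~\ref{lem:rewrite_Laplace} plus Lemma~\ref{lem:compar_nice_bound}) through H\"older's inequality, yielding $\log\Esp[e^{t\LN(\g)}]=O(t+t^2)$ for $|t|\ll N$. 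Your final Chebyshev step with $t\asymp(\log N)^{0.7}$ is fine and matches the paper's intent, but without an argument for $\LN(\hmu)$ the exponential moment bound for $\LN(\g)$ — and hence the proposition — is not established.
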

The value $0.8$ is arbitrary, the point being that $0.8 < 1$ while $1.5>1$ and thus the probabilistic tail is better than algebraic {in $N$}.
The proof of Proposition~\ref{prop:expg} will be given in Section~\ref{sec:proofexpo_hmu} and the argument actually shows that $\LN(\g)$ is typically of order 1 as expected, see Corollary~\ref{cor:expo_hmu}.

\subsection{A regularized version of the potential}
\label{sec:regu_pot}
Let $\rho$ be a radial $C^\infty$ mollifier supported on the unit disk and for $\epsilon \in (0,1)$, let $\rhoep := \epsilon^{-2} \rho\left( \frac{\cdot}{\epsilon}\right)$ and let $\fze$ be defined as:
\begin{equation*}
\fze := \rhoep \star \logz - \g
\end{equation*}
where $\g$ satisfying \eqref{eq:g} is independent of $N,z$ and $\varepsilon$.
We think of $\fze$ as being an alternative to $\logz$ which is regularized in two ways: the singularity near $z$ is removed by convolving with a mollifier, and  the Laplacian of $\fze$ has total mass $0$.
Then, in view of Proposition~\ref{prop:expg}, in order to prove Proposition~\ref{prop:UB},
we first focus on controlling the exponential moments of $\LN(\fze)$.

\begin{proposition}
\label{prop:expo_fze}
For some constant {$\Cc\geq 1$} depending only on $\beta$, for all fixed $r \in (0,1)$, for all $N$ large enough (depending on $r$,$\beta$), for all $\epsilon$ {such that $\Cc N^{-1/2} \leq \epsilon \leq \frac{1 - r}{2}$} and for all $t$ such that $|t| \leq \Cc^{-1} N \epsilon^{2}$, for any $z\in\Dd_r$, we have:
\begin{equation}
\label{eq:expo_fze}
\Esp \left[ e^{t \LN(\fze)} \right] = \exp\left(\frac{t^2 \log {\epsilon^{-1}}}{2 \beta} + t^2 O_\epsilon(1) + t O_N(1) \right).
\end{equation}
\end{proposition}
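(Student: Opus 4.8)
\textbf{Proof proposal for Proposition~\ref{prop:expo_fze}.}

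The plan is to reduce the exponential moment of $\LN(\fze)$ to a ratio of partition functions via Lemma~\ref{lem:rewrite_Laplace}, and then estimate that ratio using the comparison result of Proposition~\ref{prop:compar1}. First I would check that $\fze$ satisfies the hypotheses of Lemma~\ref{lem:rewrite_Laplace}: indeed $\Delta \fze = \rhoep \star \Delta \logz - \Delta \g = 2\pi \rhoep(\cdot - z) - 2\pi\chi$, which is supported in $\Dd$ (using $\epsilon \leq (1-r)/2$ and $z \in \Dd_r$ together with $\supp \chi \subset \Dd_r$) and satisfies $\int \Delta\fze = 2\pi - 2\pi = 0$. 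Applying Lemma~\ref{lem:rewrite_Laplace} with $s = -t/(2\pi N\beta)$ gives
\begin{equation*}
\Esp\left[ e^{t\LN(\fze)} \right] = \exp\left( \frac{t^2}{4\pi\beta} \int_{\R^2} -\fze\Delta\fze \right) \frac{\KNbeta(\mms)}{\KNbeta(\mm_0)},
\end{equation*}
where $\rms = \rm_0 + s\Delta\fze$. The quadratic prefactor is the main term: writing $-\int \fze\Delta\fze = \int |\nabla\fze|^2$ (legitimate since $\nabla\fze$ is compactly supported), and using that $\nabla(\rhoep\star\logz)$ is the regularized Coulomb field while $\nabla\g$ is a bounded $N$- and $\epsilon$-independent correction, one computes $\int|\nabla\fze|^2 = 2\pi\log\epsilon^{-1} + O_\epsilon(1)$; here $O_\epsilon(1)$ absorbs the self-energy of the mollifier and the cross and $\g$-terms, all independent of $N$. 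This yields the $\frac{t^2\log\epsilon^{-1}}{2\beta} + t^2 O_\epsilon(1)$ contribution.

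Next I would handle the partition function ratio. The perturbation is $sf$ with $f = \Delta\fze = 2\pi\rhoep(\cdot-z) - 2\pi\chi$, which is a sum of a piece at the microscopic/mesoscopic scale $\ell \asymp \epsilon$ near $z$ and a fixed macroscopic piece $-2\pi\chi$. To apply Proposition~\ref{prop:compar1} directly one would want a single scale; the cleanest route is to split $f = f_\epsilon + f_1$ with $f_\epsilon = 2\pi(\rhoep(\cdot-z) - \text{(smooth bump of mass 1 at scale }\epsilon))$ and $f_1$ a macroscopic mean-zero function, or alternatively to invoke Proposition~\ref{prop:compar2} with $\ellA \asymp \epsilon$ and $\ellB \asymp 1$ — the latter is exactly the ``mass transfer between scales'' situation and handles the two-scale structure in one shot. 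One checks the size conditions: $|f_\epsilon|_\kk \preceq \epsilon^{-(\kk+2)}$ from the scaling of the mollifier, $|f_1|_\kk \preceq 1$ from the fixed $\chi$, and $\rm_0 \equiv 1/\pi$ on $\Dd$ trivially satisfies \eqref{assum-2-Main-non-bound-m} away from the boundary (we are working inside $\Dd_r$; near $\partial\Dd$ only the fixed macroscopic piece lives, which is covered by Lemma~\ref{lem:compar_nice_bound}). The condition $|s| \leq \Cc'^{-1}\epsilon^{3/2}N^{-1/4}$ translates, via $s = -t/(2\pi N\beta)$, into $|t| \preceq N^{3/4}\epsilon^{3/2}$, which is implied by the stated hypothesis $|t| \leq \Cc^{-1}N\epsilon^2$ once $\epsilon \geq \Cc N^{-1/2}$. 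Proposition~\ref{prop:compar2} (or \ref{prop:compar1} plus \ref{lem:compar_nice_bound}) then gives
\begin{equation*}
\log\frac{\KNbeta(\mms)}{\KNbeta(\mm_0)} = N\left(\tfrac{\beta}{4}-1\right)\left(\EE(\rms) - \EE(\rm_0)\right) + O\!\left(sN^{3/4}\epsilon^{-1/2}(1+\log(N\epsilon^2))^\hal\right).
\end{equation*}
As in Remark~\ref{rk:clt}, $\EE(\rm_0) = 0$ and $\EE(\rms) = \int \log(1 - \tfrac{f}{2\pi N\beta})(1 - \tfrac{f}{2\pi N\beta}) = O(N^{-1}|f|_0^2 \cdot |\Dd|) = o(N^{-1})$ since $|f|_0 \preceq \epsilon^{-2}$ — wait, one must be careful: $N \cdot \EE(\rms) \asymp N \cdot \frac{|f|_0^2}{N^2} \cdot \epsilon^2 \asymp \frac{\epsilon^{-4}\epsilon^2}{N} = \frac{1}{N\epsilon^2}$, which is $o(1)$ precisely when $\epsilon \gg N^{-1/2}$; this is where the constant $\Cc$ in $\epsilon \geq \Cc N^{-1/2}$ is used. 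The error term, with $s \asymp t/N$, becomes $O(t N^{-1/4}\epsilon^{-1/2}(\log(N\epsilon^2))^\hal) = t\, O_N(1)$ uniformly over the allowed range of $\epsilon$ and $t$ (bounded since $\epsilon^{-1/2} \leq \Cc^{1/2}N^{1/4}$ and the log is $O(\log N)$, against the $N^{-1/4}$... actually this gives $O((\log N)^\hal)$, not $O(1)$; the honest statement is that it is $t\,O_N(1)$ in the weak sense that it is $o(t\log N)$, which suffices — or one restricts to $\epsilon$ mesoscopic where it is genuinely $o(t)$, and the microscopic case is only needed at the very end with $t = O(1)$).

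The main obstacle is the two-scale bookkeeping in the error term and making sure the decomposition of $f$ into a scale-$\epsilon$ piece and a macroscopic piece both satisfy the normalization $\int f_\epsilon = \int f_1 = 0$ while keeping the derivative bounds; this is exactly what Proposition~\ref{prop:compar2} is designed for, so the real work is verifying its hypotheses with $\ellA = c\epsilon$. A secondary subtlety is tracking the $\epsilon$-dependence versus $N$-dependence of all error terms so that they land in the claimed form $t^2 O_\epsilon(1) + t O_N(1)$ rather than mixing; in particular the quadratic term's $O_\epsilon(1)$ must be shown $N$-independent (it is, being a deterministic integral of $|\nabla\fze|^2$ minus its leading log), and the linear-in-$t$ term carries all the $N$-dependence. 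Finally one should double-check the boundary contribution: since $z \in \Dd_r$ and $\epsilon \leq (1-r)/2$, the scale-$\epsilon$ perturbation stays well inside $\Dd$, so \eqref{condiell} holds for $(z, c\epsilon)$ for $N$ large, and only the fixed $\chi$-piece interacts with the boundary region, handled by Lemma~\ref{lem:compar_nice_bound} at cost $sO(N) = tO(1)$.
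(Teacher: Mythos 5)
Your proposal follows the same route as the paper's proof: rewrite $\Esp[e^{t\LN(\fze)}]$ via Lemma~\ref{lem:rewrite_Laplace}, extract the variance term $-\int\fze\Delta\fze=2\pi\log\epsilon^{-1}+O_\epsilon(1)$ by a scaling argument, and control the partition-function ratio by Proposition~\ref{prop:compar2} with $\fA=2\pi\rhoep(\cdot-z)$, $\fB=-2\pi\chi$, $\ellA=\epsilon$ and $\ellB$ of order one; the alternative single-scale splitting you sketch is not needed.

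Two points in your final bookkeeping should be corrected. First, the implication you assert about the admissible range of $s$ is backwards: for $\epsilon\geq\Cc N^{-1/2}$ one has $N\epsilon^{2}\geq N^{3/4}\epsilon^{3/2}$, so $|t|\leq\Cc^{-1}N\epsilon^{2}$ does \emph{not} imply $|t|\preceq N^{3/4}\epsilon^{3/2}$; the two ranges are comparable only at the microscopic scale $N\epsilon^{2}\asymp 1$, and for mesoscopic $\epsilon$ with $t$ near the top of the allowed range the hypothesis \eqref{condi:s2} of Proposition~\ref{prop:compar2} genuinely requires attention (the paper glosses over this as well; in all subsequent applications $t=O(1)$, so $|s|\asymp t/N$, which satisfies \eqref{condi:s2} as soon as $\epsilon\geq \Cc N^{-1/2}$, cf.\ Remark~\ref{rem:order_error}). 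Second, your retreat at the end to ``$o(t\log N)$'' is both unnecessary and insufficient: the proposition claims an error of the form $t\,O_N(1)$, and a $t(\log N)^{\hal}$ loss would in particular spoil the one-point tail estimate of Lemma~\ref{lem:one_point_estimate}. The error term is in fact uniformly $O(t)$; you lose by bounding $\epsilon^{-1/2}\leq \Cc^{-1/2}N^{1/4}$ and the logarithm by $\log N$ separately, whereas the two factors are never simultaneously saturated. Writing $\epsilon=uN^{-1/2}$ with $u\geq\Cc\geq 1$, one gets
\[
t\,N^{-1/4}\epsilon^{-1/2}\bigl(1+\log(N\epsilon^{2})\bigr)^{\hal}
= t\,u^{-1/2}\bigl(1+2\log u\bigr)^{\hal}\preceq t ,
\]
uniformly in $u\geq 1$, which is exactly the paper's conclusion that the dominant contribution to $\log\bigl(\KNbeta(\mms)/\KNbeta(\mm_0)\bigr)$ is the $O(t)$ term coming from the entropy.
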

\corT{By $O_\epsilon(1)$ we mean a term that is bounded as $\epsilon \to 0$ independently of $N$ (this term depends only on $\beta$, which is fixed here). By $O_N(1)$ we mean a term that is bounded as $N \to \infty$ independently of $\epsilon$ (this term depends on $\beta$ and $r$, which here are both fixed).}

\begin{proof}[Proof of Proposition \ref{prop:expo_fze}]
First, we impose that $\Cc$ is large enough (depending only on $\beta$) so that the length scale $\Cc N^{-1/2}$ is larger than the minimal length scale $\trhob$ {mentioned in Section \ref{sec:Energy}}. Next, observe that $\fze$ is a $C^\infty$ function whose Laplacian is supported in {$\Dd_{r + \epsilon}$} and has mean zero. Moreover, $|\fze|_k$ is of order $\epsilon^{-k}$ for any $k\in\N$. Hence, for $|t|\le \Cc^{-1} N \epsilon^{2}$ (with $\Cc$ large enough depending only on $\beta$) we may apply Lemma \ref{lem:rewrite_Laplace} and write:
\begin{equation*}
\Esp \left[ e^{t \LN(\fze)} \right] = \exp\left( \frac{t^2}{4\pi \beta} \int_{\R^2} - \fze \Delta \fze \right) \frac{\KNbeta(\mms)}{\KNbeta(\mm_0)},
\end{equation*}
with $s = \frac{-t}{2\pi N \beta}$ and $\rms := \rm_0 + s \Delta \fze$. We have:
\begin{equation*}
-  \int_{\R^2} \fze \Delta \fze = - 2\pi \int_{\R^2} \left( \log |\cdot| \star \rhoep \right) \rhoep + O_\epsilon(1),
\end{equation*}
and by scaling we can see that: $\int_{\R^2} \left( \log |\cdot| \star \rhoep \right) \rhoep =  \log \epsilon + O_\epsilon(1)$, so we may write:
\begin{equation}
\label{variance_treated}
\Esp \left[ e^{t \LN(\fze)} \right] \leq \exp\left( - \frac{t^2 \log \epsilon}{2 \beta} + t^2O_{\epsilon}(1) \right)  \frac{\KNbeta(\mms)}{\KNbeta(\mm_0)}.
\end{equation}
The assumptions of Proposition \ref{prop:compar2} are fulfilled with
$\mm= \mm_0$, $\fA = 2 \pi \rhoep$, $\fB = - 2\pi \chi$, $\zA = z$, $\zB = 0$, $\ellA = \epsilon$ and $\ellB$ fixed ({independent of $\epsilon$ and $N$}). We obtain:
\begin{equation*}
\log \frac{\KNbeta(\mms)}{\KNbeta(\mm_0)} =  N \left(\tfrac{\beta}{4} - 1 \right)\left( \EE(\rms)\corG{-\EE(\rm_0)}\right) +  O \left( s N^{3/4} \epsilon^{-1/2} \left(1 + \log (N \epsilon^2) \right)^{{\hal}}\right),
\end{equation*}
where the implicit multiplicative constant depends only on $\beta$ and $r$.
A direct computation shows that \corG{$\EE(\rms) = \EE(\rm_0) + O(s)$} -- cf.~Remark~\ref{rk:clt} -- so, as $Ns = O(t)$ and $\epsilon \geq \Cc N^{-1/2}$, we conclude that
\begin{equation*}
\log \frac{\KNbeta(\mms)}{\KNbeta(\mm_0)} =  O(t)+  O \left(t (N\epsilon^2)^{-1/4} \left(1 + \log (N \epsilon^2) \right)^\hal \right).
\end{equation*}
This completes the proof; the dominant error term being the first one in the right-hand side.
\end{proof}

\begin{corollary}
\label{coro:fze}
There exists $\Cc \geq 1$ depending on $\beta$, such that for all $\lambda \geq \Cc$, taking $\epsilon = \lambda N^{-1/2}$ and $|t| \leq \lambda^2 \Cc^{-1} $ we have:
\begin{equation}
\label{eq:corofze}
\Esp \left[ e^{t \LN(\fze)} \right] \leq \exp\left(\frac{t^2 \log N}{4 \beta} + O(t^2 + t) \right),
\end{equation}
with an implicit constant depending on $\beta, r, {\lambda}$.
\end{corollary}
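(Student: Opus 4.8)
The plan is to obtain Corollary~\ref{coro:fze} as a direct specialization of Proposition~\ref{prop:expo_fze} to the microscopic-up-to-a-constant scale $\epsilon = \lambda N^{-1/2}$, followed by a simplification of the error terms.

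Let $\Cc$ be the constant produced by Proposition~\ref{prop:expo_fze}, which depends only on $\beta$. First I would check that, for $\lambda \geq \Cc$ and $\epsilon = \lambda N^{-1/2}$, the hypotheses of Proposition~\ref{prop:expo_fze} hold: the lower constraint $\Cc N^{-1/2} \leq \epsilon$ is exactly $\Cc \leq \lambda$, and since $r$ is fixed the upper constraint $\epsilon \leq \frac{1-r}{2}$ holds once $N$ is large enough (depending on $r$ and $\lambda$). Moreover the admissible range $|t| \leq \Cc^{-1} N \epsilon^2$ becomes $|t| \leq \Cc^{-1}\lambda^2$, which is precisely the range appearing in the statement of the corollary, with the same constant $\Cc$.

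Next I would substitute $\log \epsilon^{-1} = \hal \log N - \log \lambda$ into \eqref{eq:expo_fze}, so that
\[
\frac{t^2 \log \epsilon^{-1}}{2\beta} = \frac{t^2 \log N}{4 \beta} - \frac{t^2 \log \lambda}{2\beta}.
\]
The term $-\frac{t^2 \log \lambda}{2\beta}$ is of the form $t^2 O(1)$ with a constant depending on $\beta$ and $\lambda$ (note $\log\lambda \geq 0$ since $\lambda \geq \Cc \geq 1$); it can be absorbed, together with the $t^2 O_\epsilon(1)$ term of \eqref{eq:expo_fze} (whose implicit constant depends only on $\beta$), into a single $O(t^2)$. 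Similarly $t\,O_N(1)$ is $O(t)$ with a constant depending on $\beta$ and $r$. Collecting everything yields the identity
\[
\Esp\left[ e^{t \LN(\fze)} \right] = \exp\left( \frac{t^2 \log N}{4 \beta} + O(t^2 + t) \right),
\]
with an implicit constant depending only on $\beta$, $r$ and $\lambda$; this is stronger than the inequality \eqref{eq:corofze}, so the corollary follows.

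This argument involves no genuine difficulty; the only point requiring attention is the bookkeeping of the implicit constants — verifying that after the specialization $\epsilon = \lambda N^{-1/2}$ each error term is controlled by a constant depending only on the allowed parameters $\beta$, $r$, $\lambda$ (and not on $\epsilon$ or $N$), and that the admissible range of $t$ is exactly the one claimed.
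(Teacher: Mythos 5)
Your proposal is correct and is exactly how the corollary follows in the paper: it is stated there without separate proof precisely because it is the immediate specialization of Proposition~\ref{prop:expo_fze} to $\epsilon=\lambda N^{-1/2}$, with $N\epsilon^2=\lambda^2$ giving the range of $t$ and $\log\epsilon^{-1}=\hal\log N-\log\lambda$ giving the leading term. Your bookkeeping of the implicit constants (dependence on $\beta,r,\lambda$, with $N$ large enough so that $\epsilon\leq\frac{1-r}{2}$) is the only point of substance, and you handle it correctly.
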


\subsection{Lattice approximation and {proof of the LLN upper bound}}
\label{sec:approximation}
{We have obtained in Corollary \ref{coro:fze} a control on the exponential moments of $\LN(\fze)$ \emph{for a fixed} $z \in \Dd_r$. In the first step below, we show that it yields an upper bound \corT{on the typical values of} $\Pot(z)$  for a fixed $z$. Then we get a bound controlling the \corT{(typical)} values of $\Pot$ at \emph{all points $z$ on a sub-microscopic lattice} contained in $\Dd_r$. Finally, we turn it into a uniform control of $\Pot$ over~$\Dd_r$.}

\subsubsection*{1. Regularization and one-point tail estimate.}
First, we observe {in Lemma \ref{lem:regulPot} below} that \corT{when considering the \emph{maximal value} of $\Pot$}, we can regularize the $\log$ at the microscopic scale with a small cost. For $\epsilon > 0$, let $\PNep$ be the map:
\begin{equation}
\label{def:PNep}
\PNep : z \mapsto \LN\left( \rhoep \star \logz  \right),
\end{equation}
with $\rhoep$ as in Section \ref{sec:regu_pot}. The test function $\tlog_z := \rhoep \star \logz$ is smooth, and for $z, x \in \R^2$, we have:
\begin{equation}
\label{eq:bound_tlog}
|\nabla \tlog_z (x)| \preceq \max\left(\epsilon, |z-x|\right)^{-1},\quad  \|\corO{\mathrm{Hess}}( \tlog_z (x))\| \preceq \max\left(\epsilon, |z-x|\right)^{-2}.
\end{equation}

\begin{lemma}
\label{lem:regulPot}
For $\epsilon > 0$ and $z \in \Dd$ we have, with a universal implicit constant:
\begin{equation*}
\Pot(z) \leq \PNep(z) + O\left(N \epsilon^2 \right).
\end{equation*}
\end{lemma}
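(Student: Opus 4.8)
The plan is to rewrite the difference $\Pot(z)-\PNep(z)$ as a single fluctuation and then exploit that the relevant test function is pointwise non-positive and supported in the microscopic disk $\Dd(z,\epsilon)$, with a contribution that is $O(\epsilon^2)$ rather than $O(\epsilon^2\log(1/\epsilon))$ thanks to a cancellation of the logarithmic terms.

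First I would note that, setting $g := \logz - \rhoep\star\logz$, the definitions \eqref{def:PotXN} and \eqref{def:PNep} give $\Pot(z)-\PNep(z)=\LN(\logz)-\LN(\rhoep\star\logz)=\LN(g)=\sum_{i=1}^N g(x_i)-N\int_{\Dd} g\,\d\mm_0$. (If $z$ coincides with one of the $x_i$ the left-hand side is $-\infty$ and there is nothing to prove.) Since $\log|\cdot|$ is subharmonic on $\R^2$ (its distributional Laplacian equals $2\pi\delta_0\ge 0$) and $\rhoep$ is a non-negative radial probability density, the sub-mean-value property gives $\rhoep\star\logz\ge\logz$ pointwise, hence $g\le 0$; in particular $\sum_{i=1}^N g(x_i)\le 0$, so that $\Pot(z)-\PNep(z)\le -N\int_{\Dd}g\,\d\mm_0=N\rm_0\int_{\Dd}\bigl(\rhoep\star\logz-\logz\bigr)$.

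Second I would localize: since $y\mapsto\log|z-y|$ is harmonic away from $z$ and $\rhoep$ is radial and supported in $\Dd_\epsilon$, decomposing $\rhoep$ into uniform measures on circles of radius $\le\epsilon$ and applying the mean-value property of harmonic functions shows that $\rhoep\star\logz=\logz$ on $\R^2\setminus\Dd(z,\epsilon)$. Combined with $g\le 0$, this gives $\int_{\Dd}\bigl(\rhoep\star\logz-\logz\bigr)\le\int_{\Dd(z,\epsilon)}\bigl(\rhoep\star\logz-\logz\bigr)$. On $\Dd(z,\epsilon)$ one bounds the two terms separately: $|z-x+w|\le 2\epsilon$ for $x\in\Dd(z,\epsilon)$ and $w\in\Dd_\epsilon$, so $\rhoep\star\logz\le\log(2\epsilon)$ there, while $\int_{\Dd(z,\epsilon)}\logz(x)\,\d x=\int_{|u|\le\epsilon}\log|u|\,\d u=\pi\epsilon^2\log\epsilon-\tfrac{\pi\epsilon^2}{2}$. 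The crucial point is that the $\epsilon^2\log\epsilon$ contributions cancel, leaving $\int_{\Dd(z,\epsilon)}\bigl(\rhoep\star\logz-\logz\bigr)\le\pi\epsilon^2\log(2\epsilon)-\bigl(\pi\epsilon^2\log\epsilon-\tfrac{\pi\epsilon^2}{2}\bigr)=\pi\epsilon^2\bigl(\log 2+\tfrac12\bigr)$. Multiplying by $N\rm_0=N/\pi$ yields $\Pot(z)-\PNep(z)\le N\epsilon^2(\log 2+\tfrac12)=O(N\epsilon^2)$ with a universal constant.

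The argument is elementary throughout; the only point requiring a little care is the sign bookkeeping that produces the cancellation of the $\epsilon^2\log(1/\epsilon)$-sized terms, so that the error is genuinely $O(N\epsilon^2)$ and not merely $O(N\epsilon^2\log(1/\epsilon))$. I do not anticipate any serious obstacle.
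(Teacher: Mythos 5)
Your proof is correct and follows essentially the same route as the paper: the pointwise inequality $\logz \le \rhoep\star\logz$ from subharmonicity kills the sum over the particles, and the centering term is handled by a direct computation giving $N\cdot O(\epsilon^2)$. Your explicit localization of $\int_{\Dd}(\rhoep\star\logz-\logz)\,\d\mm_0$ to $\Dd(z,\epsilon)$ via the mean value property of the radial mollifier is precisely the ``direct computation (using e.g.\ Newton's theorem)'' that the paper invokes, just spelled out with the cancellation of the $\epsilon^2\log\epsilon$ terms made explicit.
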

\begin{proof}
\corT{Recall that we are interested in the value of $\Pot(z)$ for some fixed $z$ and that by definition:
\begin{equation*}
\Pot(z) = \sum_{i=1}^N \logz(x_i) - N \int_{\R^2} \logz(x) \d \mm_0(x).
\end{equation*} }
By subharmonicity of $\log$, we have $\logz \leq \rhoep \star \logz$ pointwise, \corT{which implies that
\begin{equation}
\label{regulPot1}
\sum_{i=1}^N \logz(x_i) \leq \sum_{i=1}^N \rhoep \star \logz(x_i).
\end{equation}
}
On the other hand a direct computation (using e.g. Newton's theorem) yields:
\begin{equation}
\label{regulPot2}
\left| \int_{\R^2} \logz(x) \d \mm_0(x) - \int_{\R^2} \left( \rhoep \star \logz \right) (x) \d \mm_0(x) \right| = O\left(\epsilon^2\right).
\end{equation}
\corT{Combining \eqref{regulPot1} and \eqref{regulPot2} gives:
\begin{equation*}
\Pot(z) \leq \sum_{i=1}^N \rhoep \star \logz(x_i) - N \int_{\R^2} \left( \rhoep \star \logz \right) (x) \d \mm_0(x)  + N O\left(\epsilon^2\right) = \PNep(z)  + N O\left(\epsilon^2\right)
\end{equation*}
using the definition \eqref{def:PNep} of $\PNep$, which proves the claim.
}
\end{proof}
For the rest of the section, we fix:
\begin{equation}
\label{eq:choix_lambda}
\epsilon = \lambda N^{-1/2}, \quad \lambda^2 = \Cc^2 \max\left(4\sqrt{\beta},1 \right),
\end{equation}
where the constant $\Cc$ is as in Corollary \ref{coro:fze} (depending only on $\beta$). By Lemma \ref{lem:regulPot} we have, with an implicit constant depending only on $\beta$:
\begin{equation}
\label{eq:PotRegPot}
\Pot(z) \leq \PNep(z) + O\left(1 \right).
\end{equation}


\begin{lemma}
\label{lem:one_point_estimate}
Fix a point $z\in \Dd_r$ and let $\alpha\in(1, 2)$. We have:
\begin{equation}
\label{eq:one_point}
\PNbeta \left[ |\PNep(z)| \geq \frac{\alpha \log N}{\sqrt{\beta}} \right] \leq \exp\left( - \alpha^2 \log N + O(1) \right),
\end{equation}
{with an implicit constant depending on $\beta, r$}.
\end{lemma}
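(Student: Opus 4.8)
The plan is a Chebyshev/exponential‑moment argument. Recall that $\PNep(z) = \LN(\tlog_z)$ with $\tlog_z := \rhoep \star \logz$, that $\fze = \tlog_z - \g$, hence $\PNep(z) = \LN(\fze) + \LN(\g)$, and that $\epsilon = \lambda N^{-1/2}$ is fixed as in \eqref{eq:choix_lambda}. Writing $v := \alpha \log N/\sqrt\beta$, I would treat the two tails $\{\PNep(z) \geq v\}$ and $\{\PNep(z) \leq -v\}$ by a union bound; since Corollary \ref{coro:fze} is valid for $t$ of either sign the two cases are symmetric, so it suffices to bound $\Esp[e^{t \LN(\tlog_z)}]$ for a suitable $t > 0$ and apply Markov.

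The decisive point is that the centering term $\LN(\g)$ should contribute only $O(1)$, not $o(\log N)$, to the exponent. I would obtain this by running the change‑of‑background computation underlying Lemma \ref{lem:rewrite_Laplace} on the admissible test function $\fze$ (which has $\int \Delta \fze = 0$, with $\Delta \fze$ supported in $\Dd_{r+\epsilon} \subset \Dd$), while carrying the extra factor $e^{t\LN(\g)}$ along unchanged; since $\g$ enters neither the variance term nor $\KNbeta(\mms)$, this yields the exact factorisation
\[
\Esp\!\left[e^{t \LN(\tlog_z)}\right] = \Esp\!\left[e^{t \LN(\fze)}\right] \cdot \Esp_{\Pbeta_{N, \mms}}\!\left[e^{t \LN^{\mm_0}(\g)}\right], \qquad \mms := \mm_0 + s \Delta \fze, \ \ s := \tfrac{-t}{2\pi N \beta}.
\]
The first factor is $\exp\!\big(\tfrac{t^2 \log N}{4\beta} + O(t^2 + t)\big)$ by Corollary \ref{coro:fze}, valid for $|t| \leq \lambda^2 \Cc^{-1}$. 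For the second factor, $\mms$ is an $O(1/N)$‑perturbation of $\mm_0$, and the deterministic shift $\LN^{\mm_0}(\g) - \LN^{\mms}(\g) = N\int \g\,(\d\mms - \d\mm_0) = 2\pi N s \int \fze \chi$ is $O(1)$ because $\int \fze \chi = \g(z) - \int \g \chi + O(\epsilon^2)$ is bounded for $z \in \Dd_r$; so it remains to see that $\Esp_{\Pbeta_{N, \mms}}[e^{t \LN^{\mms}(\g)}] = e^{O(1)}$ for bounded $t$, which follows from the a priori control of the centering term (Proposition \ref{prop:expg}, and in its quantitative form Corollary \ref{cor:expo_hmu}), whose argument goes through for the mildly perturbed background $\mms$. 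Altogether $\Esp[e^{t \LN(\tlog_z)}] \leq \exp\!\big(\tfrac{t^2 \log N}{4\beta} + O(1)\big)$ for $|t| \leq \lambda^2 \Cc^{-1}$, so Markov gives $\P[\PNep(z) \geq v] \leq \exp\!\big(-tv + \tfrac{t^2 \log N}{4\beta} + O(1)\big)$.

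Finally I would optimise in $t$: the minimiser of $-tv + \tfrac{t^2 \log N}{4\beta}$ is $t_\star := \tfrac{2\beta v}{\log N} = 2\sqrt\beta\,\alpha$, with value $-\tfrac{\beta v^2}{\log N} = -\alpha^2 \log N$. The one real check is admissibility $t_\star \leq \lambda^2 \Cc^{-1}$, and this is exactly where the hypothesis $\alpha < 2$ (and the calibration \eqref{eq:choix_lambda}) enters: $\lambda^2 \Cc^{-1} = \Cc \max(4\sqrt\beta, 1) \geq 4\sqrt\beta > 2\sqrt\beta\,\alpha$ since $\Cc \geq 1$. Combining the two symmetric tails completes the proof. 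The main obstacle, and the only non‑routine step, is controlling the centering factor $\Esp_{\Pbeta_{N,\mms}}[e^{t\LN(\g)}]$ sharply: a naive Cauchy–Schwarz or Hölder split of $\Esp[e^{t\LN(\fze)}e^{t\LN(\g)}]$ would double the effective variance of $\LN(\fze)$ and yield only $-\tfrac{\alpha^2}{2}\log N$, while using merely the single‑level tail bound of Proposition \ref{prop:expg} (control of $\LN(\g)$ up to level $(\log N)^{0.8}$) would leave a harmless but non‑optimal $O((\log N)^{0.8})$ in the exponent — either of which, incidentally, would still be enough for the union bound over a sub‑microscopic lattice carried out later. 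Obtaining the stated $O(1)$ requires the factorisation above together with the full exponential‑moment control of the centering term.
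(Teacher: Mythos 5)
Your overall skeleton (exponential Chebyshev with $t=2\alpha\sqrt\beta$, which is admissible precisely because $\alpha<2$ and $\lambda^2=\Cc^2\max(4\sqrt\beta,1)$) is the same as the paper's, and your factorisation identity
$\Esp[e^{t\LN(\tlog_z)}]=\Esp[e^{t\LN(\fze)}]\,\Esp_{\Pbeta_{N,\mms}}[e^{t\LN^{\mm_0}(\g)}]$
is indeed exact. The gap is in the step you yourself flag as the only non-routine one: the claim that $\Esp_{\Pbeta_{N,\mms}}[e^{t\LN^{\mms}(\g)}]=e^{O(1)}$ "follows from Proposition \ref{prop:expg} / Corollary \ref{cor:expo_hmu}, whose argument goes through for the mildly perturbed background $\mms$." Those results are proved only under $\PNbeta$, i.e.\ for the background $\mm_0$, and their proofs use structure that does \emph{not} survive the tilt: the non-neutral piece $\hmu$ is handled by the exact scaling identity of Claim \ref{claim_scalin}, which is tied to the quadratic confinement in \eqref{eq:Pnbetav2} and fails once the background is $\mms=\mm_0+s\Delta\fze$ (the effective potential is no longer exactly $|x|^2$); and the neutral piece $\g-c\hmu$ is handled by Lemma \ref{lem:compar_nice_bound}, whose exact transport relies on the radial symmetry of \emph{both} the perturbation and the reference density, whereas $\mms$ carries a non-radial microscopic bump $2\pi s\,\rhoep(\cdot-z)$ at an arbitrary $z\in\Dd_r$. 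Making your step rigorous would essentially require a new joint exponential-moment estimate (or a chain of partition-function comparisons through $\mm_0$ together with a perturbed scaling argument), none of which is supplied; note also that trying to reduce to $\PNbeta$ by Cauchy--Schwarz reintroduces exactly the $\log N$ loss you are trying to avoid.

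The paper takes the simpler route you dismiss as "non-optimal": it bounds the tails of $\LN(\fze)$ via Corollary \ref{coro:fze} and Markov with $t=2\alpha\sqrt\beta$, and then converts to $\PNep(z)$ on the event $\{|\LN(\g)|\le(\log N)^{0.8}\}$ of Proposition \ref{prop:expg}, whose complement has probability $\exp(-\hal(\log N)^{1.5})$, negligible compared to $N^{-\alpha^2}$. You are right that, read literally, this conversion costs an extra $O((\log N)^{0.8})$ (or equivalently shifts the threshold by $(\log N)^{0.8}$) rather than the stated $O(1)$, but as you observe this is harmless for the lattice union bound with $\delta<(\alpha^2-1)/2$, and since $\alpha\in(1,2)$ is arbitrary it does not affect Theorem \ref{theo:main}. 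So: your proposed sharpening is not needed for the lemma's purpose, and as written it rests on an unproven assertion; either justify the tilted-measure control of the centering term, or fall back on the event-based argument, which is the paper's proof.
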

\corO{We note that the upper bound $\alpha<2$ is done for convenience only, one could obtain similar estimates for larger $\alpha$; for our needs, any $\alpha>1$ will do.}
\begin{proof}
Take $t : = 2 \alpha \sqrt{\beta} \leq 4 \sqrt{\beta}$ so that, in view of \eqref{eq:choix_lambda}, $t\le \lambda^2 \Cc^{-2} \leq \lambda^2 \Cc^{-1}$ with $\Cc$ as in Corollary \ref{coro:fze}. Then, we may thus use \eqref{eq:corofze} and write:
\begin{equation*}
\Esp \left[ e^{t \LN(\fze)} \right] = \exp\left(\frac{t^2 \log N}{4 \beta} + O(t^2 + t) \right) = \exp\left( \alpha^2 \log N + O(1) \right),
\end{equation*}
with an implicit constant depending only on $r, \beta$. So, Markov's inequality yields:
\begin{equation}
\label{controlfze}
\begin{aligned}
\PNbeta \left[ \LN(\fze) \geq \frac{\alpha \log N}{\sqrt{\beta}}  \right]
&\leq \exp\left( - \frac{t \alpha \log N}{\sqrt{\beta}}  \right)\Esp \left[ e^{t \LN(\fze)} \right]\\
&\leq \exp\left( - \alpha^2 \log N + O(1) \right).
\end{aligned}
\end{equation}
Note that since \eqref{coro:fze} is valid for $t\in\R$, the same bound holds for $- \LN(\fze)$ as well.
On the other hand,  by Proposition~\ref{prop:expg}, we have $|\LN(\g)| \leq (\log N)^{0.8}$ with probability $1 - \exp\left( - (\log N)^{1.5}\right)$. Since by definition $\LN(\fze) = \PNep(z) + \LN(\g)$, we may convert the control \eqref{controlfze} on $\LN(\fze)$ into a similar estimate for {$\PNep(z)$, as stated.}
\end{proof}

\subsubsection*{2. Tail estimate on a lattice.}
For $\alpha\in(1, 2)$ and fix $\delta$ (depending on $\alpha$) such that:
\begin{equation}
\label{choixdelta}
0 < \delta < \frac{\alpha^2 -1}{2}
\end{equation}
Let $\elld := N^{-\hal - \delta}$ (in particular, $\elld$ is sub-microscopic) and consider the lattice $\Latt:=\left(\elld \Z\right)^2$.
\begin{lemma}[Tail estimate on the lattice]
\label{lat-est}
Let $\alpha$ be in $(1, 2)$. We have:
\begin{equation}
\label{eq:tail_on_lattice}
\PNbeta \left[ \max_{z \in \Latt \cap \Dd(0,r)} |\PNep(z)| \geq \frac{\alpha \log N}{\sqrt{\beta}}  \right]  \leq \exp\left( (1 + 2\delta) \log N - \alpha^2 \log N + O(1) \right),
\end{equation}
with an implicit constant depending on $\beta, r$. {In particular we have:
\begin{equation}
\label{one-point-decay}
\PNbeta \left[ \max_{z \in \Latt \cap \Dd(0,r)} |\PNep(z)| \geq \frac{\alpha \log N}{\sqrt{\beta}}  \right] \to 0,
\end{equation}
as $N \to \infty$  -- the decay being in fact algebraic in $N$.}
\end{lemma}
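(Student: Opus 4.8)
The plan is to combine the one-point tail estimate of Lemma~\ref{lem:one_point_estimate} with a straightforward union bound over the points of $\Latt \cap \Dd(0,r)$. First I would record the size of this lattice: since $\elld = N^{-1/2-\delta}$ and $\Dd(0,r)$ has area $\pi r^2$, a standard volume comparison (each cell of $\Latt$ has area $\elld^2$, with a negligible boundary correction) gives
\[
\#\left( \Latt \cap \Dd(0,r) \right) \leq C_r\, \elld^{-2} = C_r\, N^{1 + 2\delta}
\]
for some $C_r$ depending only on $r$; in particular $\log \#\left( \Latt \cap \Dd(0,r) \right) = (1 + 2\delta) \log N + O(1)$.

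Next, I apply Lemma~\ref{lem:one_point_estimate} at each $z \in \Latt \cap \Dd(0,r)$. The key point is that every such $z$ lies in $\Dd_r$, so \eqref{eq:one_point} holds with the \emph{same} implicit constant — it depends only on $\beta$ and $r$, not on $z$ — and therefore
\[
\PNbeta \left[ |\PNep(z)| \geq \frac{\alpha \log N}{\sqrt{\beta}} \right] \leq \exp\left( - \alpha^2 \log N + O(1) \right)
\]
uniformly over $z \in \Latt \cap \Dd(0,r)$. A union bound then yields
\[
\PNbeta \left[ \max_{z \in \Latt \cap \Dd(0,r)} |\PNep(z)| \geq \frac{\alpha \log N}{\sqrt{\beta}} \right] \leq \#\left( \Latt \cap \Dd(0,r) \right) \cdot \exp\left( - \alpha^2 \log N + O(1) \right),
\]
and inserting the cardinality bound above produces exactly \eqref{eq:tail_on_lattice}.

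Finally, for \eqref{one-point-decay} I would invoke the choice \eqref{choixdelta}: since $\delta < \frac{\alpha^2 - 1}{2}$ we have $1 + 2\delta < \alpha^2$, so the exponent $\left(1 + 2\delta - \alpha^2\right)\log N + O(1)$ tends to $-\infty$ linearly in $\log N$, i.e. the probability decays like a negative power of $N$. There is essentially no analytic difficulty in this lemma — the only thing requiring care is the uniformity over the lattice of the $O(1)$ term coming from Lemma~\ref{lem:one_point_estimate}, which is already guaranteed because that constant depends only on $\beta, r$ and all points of $\Latt \cap \Dd(0,r)$ lie in $\Dd_r$.
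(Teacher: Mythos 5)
Your proof is correct and follows exactly the paper's argument: count the lattice points ($\asymp N^{1+2\delta}$), apply the one-point estimate of Lemma~\ref{lem:one_point_estimate} uniformly over $\Latt \cap \Dd(0,r)$ (the constant depending only on $\beta, r$), take a union bound, and conclude the decay from the choice \eqref{choixdelta} of $\delta$. Nothing to add.
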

\begin{proof}
The number of points of $\Latt$ that fall into $\Dd_r$ is bounded above and below by positive constants (depending only on $r$) times $N^{1 + 2 \delta}$. Thus \eqref{eq:tail_on_lattice} follows from a simple union bound, using the one-point estimate \eqref{eq:one_point}. {From our choice \eqref{choixdelta} for $\delta$ we easily deduce \eqref{one-point-decay}.}
\end{proof}

\subsubsection*{{3. Extension to the whole disk}}
It remain to prove a result comparing the maximum of the Coulomb gas potential over the lattice points versus the whole unit disk.
Recall that $\varepsilon$ corresponds to some ``large enough microscopic'' scale, see \eqref{eq:choix_lambda}.
\begin{proposition}[The lattice vs. the whole disk]
\label{prop:lattice}
One has
\begin{equation}
\label{eq:lattice_vs_disk}
\PNbeta \left[ \max_{z \in \Dd(0,r)} \PNep(z) \geq 1 + \max_{z' \in \Latt \cap \Dd(0,r)} \PNep(z') \right] \to 0 \text{ as } N \to \infty.
\end{equation}
\end{proposition}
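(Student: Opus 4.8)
The plan is to control the oscillation of the regularized potential $\PNep$ at scales below the lattice spacing $\elld = N^{-1/2-\delta}$, so that knowing the values of $\PNep$ on $\Latt$ determines the supremum over $\Dd(0,r)$ up to an error $o(1)$ (and certainly up to an additive $1$). For any $z \in \Dd(0,r)$, pick the nearest lattice point $z' \in \Latt$, so $|z - z'| \leq \sqrt{2}\,\elld$. Then $\PNep(z) - \PNep(z') = \LN(\tlog_z - \tlog_{z'})$, and the task reduces to showing that with probability tending to $1$,
\begin{equation*}
\max_{\substack{z \in \Dd(0,r) \\ z' \in \Latt,\ |z-z'| \leq \sqrt{2}\elld}} \left| \LN(\tlog_z - \tlog_{z'}) \right| \leq 1.
\end{equation*}
The difference $\psi := \tlog_z - \tlog_{z'}$ is smooth, supported (after subtracting a constant, which does not affect $\LN$) essentially near $z, z'$, but it is \emph{not} concentrated on a single scale: its Laplacian $\rhoep \star (\delta_z - \delta_{z'})$ lives at the microscopic scale $\epsilon \asymp N^{-1/2}$, while $\psi$ itself only decays like $\log$ and is non-negligible out to macroscopic distance. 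So a naive application of Lemma~\ref{lem_fluct_uniform} with $\ell = 1$ gives a bound of order $|\psi|_{\1} \times (N)^{1/2}$, which is far too large since $|\nabla \psi| \asymp \epsilon^{-1} \asymp N^{1/2}$ near $z$.

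The key step is therefore a \textbf{multi-scale (dyadic) decomposition} of $\psi$. Write $\psi = \sum_{j} \psi_j$ where $\psi_j$ is localized in the annulus $\{x : |x-z| \asymp 2^j \epsilon\}$ (with the innermost piece a disk of radius $\asymp \epsilon$), for $j$ ranging from $0$ up to $j_{\max} \asymp \log N$ (when $2^{j}\epsilon \asymp 1$). On the $j$-th annulus, since $\log|z-x| - \log|z'-x| = O(|z-z'|/(2^j\epsilon)) = O(\elld/(2^j \epsilon))$ there and its gradient is $O(\elld/(2^{j}\epsilon)^2)$, the piece $\psi_j$ is, up to rescaling, a $\frac{1}{\ell_j}$-Lipschitz function supported in a disk of radius $\ell_j \asymp 2^j \epsilon$ with sup-norm of its gradient gaining a factor $\elld \cdot 2^{-j}\epsilon^{-1} \cdot \ell_j^{-1}$ — that is, $\psi_j = (\text{small factor}) \times (\text{normalized Lipschitz bump at scale } \ell_j)$ with small factor $\asymp \elld \epsilon^{-1} 2^{-j}$. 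Applying Lemma~\ref{lem_fluct_uniform} at scale $\ell_j$ (covering $\Dd(0,r)$ by $O((1/\ell_j)^2)$ such disks and taking a union bound over the covering and over the at-most-$O(1)$ choices of neighboring lattice point) gives, for $t$ a small constant,
\begin{equation*}
\log \Esp\left[ \sup \exp\left( t \elld^{-1} \epsilon \, 2^{j} \LN(\psi_j) \right) \right] = O\left( (N \ell_j^2)^{1/2} \right) = O\left( 2^j \epsilon N^{1/2} \right) = O(2^j),
\end{equation*}
using $\epsilon \asymp N^{-1/2}$. A union bound over the $O((N/\ell_j^2))$ disks costs $O(\log N)$ in the exponent; since the prefactor is $\elld^{-1}\epsilon 2^j = N^{-1/2-\delta} \cdot N^{-1/2} \cdot 2^j / (\text{stuff})$... more carefully: we want to bound $\LN(\psi_j)$ itself, so we absorb the small factor: $\LN(\psi_j) = O(\elld \epsilon^{-1} 2^{-j}) \times (\text{fluctuation of a normalized bump})$, and the normalized bump's fluctuation, by Lemma~\ref{lem_fluct_uniform} and a union bound over $O(\ell_j^{-2})$ disks plus Markov, is $O((N\ell_j^2)^{1/2} \log N) = O(2^j \log N)$ with overwhelming probability. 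Hence $\LN(\psi_j) = O(\elld \epsilon^{-1} \log N) = O(N^{-\delta} \log N)$ uniformly in $j$, and summing over the $O(\log N)$ values of $j$ gives $\sum_j |\LN(\psi_j)| = O(N^{-\delta} (\log N)^2) = o(1)$ with probability $\to 1$. This is far smaller than $1$, which proves \eqref{eq:lattice_vs_disk}.

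The main obstacle is making the dyadic decomposition clean: one must choose a partition of unity adapted to the annuli $\{|x - z| \asymp 2^j \epsilon\}$, verify that each $\psi_j$ (after subtracting its mean or a suitable constant, and after handling the overlap between $\tlog_z$- and $\tlog_{z'}$-centered pieces) genuinely satisfies the Lipschitz/support hypotheses of Lemma~\ref{lem_fluct_uniform} at scale $\ell_j \asymp 2^j\epsilon$ with the claimed small prefactor, uniformly in $z \in \Dd(0,r)$ and in the neighboring lattice point $z'$. In particular the cutoffs must be taken so that $|\D^2 \psi_j|$ also obeys the natural bound — this is where \eqref{eq:bound_tlog} is used — and the bound must degrade gracefully for the outermost scale $2^j\epsilon \asymp 1-r$, where one uses the ``macroscopic'' version of Lemma~\ref{lem_fluct_uniform} with $\ell = 1$ and $\mathcal{L}$ the $1$-Lipschitz functions supported in $\Dd(0,2)$. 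Once the decomposition is in place, the union bound is routine because each scale contributes only $O(2^j)$ to the exponent against a probability budget of $O(\log N)$ from the covering, and the small prefactor $\elld \epsilon^{-1} \asymp N^{-\delta}$ kills everything.
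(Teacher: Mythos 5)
Your proposal is essentially the paper's own argument: reduce to $\LN(\Gzz)$ with $\Gzz=\tlog_z-\tlog_{z'}$, use the derivative bounds \eqref{eq:bound_tlog} to split $\Gzz$ by a partition of unity into pieces living on geometrically increasing scales $\ell_j$ from $\epsilon$ up to macroscopic, observe that each piece is $O(\elld/\ell_j)$ times a normalized $\frac1{\ell_j}$-Lipschitz function, and invoke Lemma~\ref{lem_fluct_uniform} scale by scale; the only differences (per-scale Markov plus a union bound over a covering, instead of the paper's convexity trick, single Markov, and uniformity from the sup inside the expectation with a union bound over lattice points only) are cosmetic and both yield an $o(1)$ oscillation, well below the allowed additive $1$.

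The one point you gloss over is the far field: $\Gzz$ is not compactly supported, so after cutting off at radius $2$ there remains a tail of size $O(\elld)$ outside $\Dd(0,2)$ whose contribution to $\LN$ comes from possible outlier points; the paper handles this by extending the partition of unity up to a radius $M$ large enough (depending on $\beta$) and using a large-deviation estimate that no point lies outside $\Dd(0,M)$ except with probability $\exp(-\beta N M^2)$. Adding that routine estimate (or pushing your outermost cutoff to such an $M$) completes your argument.
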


\begin{proof}[Proof of Proposition \ref{prop:lattice}]
\corT{Let $z\in \Ddr$, and let $z'$ be a point in $\Latt \cap \Ddr$ such that $|z-z'| \leq 4 \elld$ (such a point always exists because the lattice has been chosen narrow enough)}. We want to show that, with high probability, the values of the regularized potential at $z$ and $z'$ are close.

Recall the definition \eqref{def:PNep} of $\PNep$. We can write the difference $\PNep(z) -  \PNep(z')$ as $\LN( \Gzz)$, where $\Gzz$ is the function
\begin{equation*}
\Gzz(x) := \rhoep \star \logz(x) - \rhoep \star \logzp(x) = \tlog_x(z) - \tlog_x(z').
\end{equation*}
In view of the bounds \eqref{eq:bound_tlog} on the derivatives of $\tlog$, and since $|z-z'| \leq 4 \elld$, we get:
\begin{equation}
\label{estimeeGzz2}
\begin{aligned}
&|\Gzz(x)| \preceq  \elld \epsilon^{-1},&|\nabla \Gzz(x)| \preceq \elld \epsilon^{-2} \text{ for } x \in \Dd(z', 4 \epsilon),\\
&|\Gzz(x)| \preceq  \elld |z-x|^{-1},& |\nabla \Gzz(x)| \preceq \elld |z-x|^{-2}  \text{ for } x \notin \Dd(z', 4 \epsilon).
\end{aligned}
\end{equation}
\corT{In order to control the difference between the values of $\Pot$ at the lattice points and on the whole disk, we would like to control the size of \corG{$\LN(\Gzz)$} \emph{uniformly} for all $(z,z')$ as above. Lemma \ref{lem_fluct_uniform} does provide such a uniform control, but it applies to functions with \emph{a given Lipschitz constant}, which is not the case of $\Gzz$, which is intrinsically multi-scale - see \eqref{estimeeGzz2}. We thus need to extend this result from a ``single-scale'' statement to a ``multi-scale'' one, the same way that Proposition \ref{prop:compar2} extends Proposition \ref{prop:compar1}.}

\vspace{0.2cm}

\corT{\textbf{1. Dyadic scale decomposition.}}
We introduce a sequence of intermediate length scales $\ell_0 < \dots < \ell_n$ by taking $c, n$ such that:
\begin{equation*}
n = \floor{\log \left( \frac{1 -r}{4 \epsilon} \right)}, \quad \log c := \frac{1}{n} \log \left( \frac{1 -r}{4 \epsilon} \right),
\end{equation*}
and by setting $\ell_k := c^k \epsilon$ for $k = 0, \dots, n$. Note that
$c\in [1,2]$, that $n = O(\log N)$, that $\ell_0 = \epsilon$ and that $\ell_n = \frac{1 -r}{4}$. We also set $\ell_{n+1} = M$, with $M$ large (depending on $\beta$ but not on $N$) to be chosen later. Next, we take a family $(\chi_i)_{0 \leq i \leq n+1}$ of functions such that:
\begin{equation*}
\sum_{i=0}^{n+1} \chi_i \equiv 1 \text{ on } \Dd(0, M), \quad \sum_{i=0}^{n+1} \chi_i \equiv 0 \text{ outside } \Dd(0, 2M),
\end{equation*}
each $\chi_i$ living at scale $\ell_i$ around $z'$, namely:
\begin{equation*}
|\chi_i|_\0 \leq 1, \quad |\chi_i|_\1 \preceq \ell_i^{-1}, \quad \chi_i \equiv 0 \text{ outside } \Dd(z', 4 \ell_i)\setminus \Dd(z',\ell_1/2),
\end{equation*}
and we let $\Gzz_i := \chi_i \Gzz$ for $0 \leq i \leq n$.

\vspace{0.2cm}

\corT{\textbf{2. Controlling every scale}}
Using the bounds on $\Gzz, \nabla \Gzz$ and the ones on $\chi_i, \nabla \chi_i$, a direct computation ensures that $\Gzz_i$ is (up to some multiplicative constant) $\elld \ell_i^{-2}$-Lipschitz for each $i$ - in other words, the function $\tG_i := \frac{\ell_i}{\elld} \Gzz_i$ is $\frac{1}{\ell_i}$-Lipschitz and thus satisfies the assumptions of Lemma \ref{lem_fluct_uniform}. We may write, using a convexity inequality for $\exp$:
\begin{multline*}
\exp\left( N^{\delta/2} \sum_{i=0}^{n+1} \LN(\Gzz_i) \right) = \exp\left(N^{\delta/2} \sum_{i=0}^{n+1} \frac{\elld}{\ell_i} \LN(\tG_i) \right) \\
\preceq \frac{1}{\log N} \sum_{i=0}^{n+1} \exp\left( \frac{N^{\delta/2} \elld \log N}{\ell_i} \LN(\tG_i) \right),
\end{multline*}
where we used that $n$ is of order $\log N$. Since $\frac{N^{\delta/2} \elld \log N }{\ell_i} \ll 1$, we may use the control in exponential moments given by Lemma \ref{lem_fluct_uniform} and write:
\begin{equation*}
\log \Esp \left[  \exp\left(\frac{N^{\delta/2} \elld \log N }{\ell_i} \LN(\tG_i) \right) \right] = O\left(\frac{N^{\delta/2} \elld \log N }{\ell_i} \left(N \ell_i^2\right)^\hal   \right) = O\left(N^{-\delta/2} \log N \right).
\end{equation*}
We obtain a similar control for the (macroscopic) scale $\ell_{n+1}$, see the last comment in Lemma~\ref{lem_fluct_uniform}. Then Markov's inequality yields:
\begin{equation}
\label{eq:sumGismall}
\PNbeta \left[  \sum_{i=0}^{n+1} \LN(\Gzz_i)  \geq 1 \right] \leq \exp\left(- N^{\delta/2} \right).
\end{equation}
We emphasize that since we are using Lemma \ref{lem_fluct_uniform}, which provides uniform control for fluctuations of all Lipschitz functions living at scale $\ell$ around a given point, then for fixed $z'$, the control \eqref{eq:sumGismall} is in fact uniform in $z$ i.e.
\begin{equation}
\label{comparaison_reseau}
\PNbeta \left[ \max_{z, |z-z'| \leq 4 \elld}  \sum_{i=0}^{n+1} \LN(\Gzz_i) \geq 1 \right] \leq \exp\left(- N^{\delta/2} \right).
\end{equation}

\vspace{0.2cm}

\corT{\textbf{3. Handling the outliers.}}
On the other hand, since by construction $\sum_{i=0}^{n+1} \chi_i \equiv 1 \text{ on } \Dd(0, M)$, any difference between $\PNep(z) -  \PNep(z') = \LN(\Gzz)$ and $\sum_{i=0}^{n+1} \LN(G_i)$ comes from hypothetical outliers living far away from the unit disk, namely outside $\Dd(0, M)$. A simple large deviation
estimate (see e.g. \cite[(1.48)]{sandier20152d}) shows that the probability of any point being outside $\Dd(0, M)$ decays as $\exp\left(-\beta N M^2\right)$ as $N \to \infty$ for $M$ large enough depending only on $\beta$ (indeed the non-negative confinement term $2 \zeta(x)$ appearing in the Boltzmann factor grows as $|x|^2$ for large $x$ as can be seen in \eqref{def:zeta}).

\vspace{0.2cm}

\corT{\textbf{4. Conclusion of the proof of Proposition \ref{prop:lattice}.}}
Finally, we obtain (by symmetry) for any fixed $z'$:
\begin{equation}
\label{comparaison_reseau_2}
\PNbeta \left[ \max_{z, |z-z'| \leq 4 \elld} \left|  \PNep(z) -  \PNep(z') \right| \geq 1 \right] \leq  \exp\left(- N^{\delta/2} \right).
\end{equation}
A union bound over $z' \in \Latt \cap \Dd$ concludes the proof.
\end{proof}

{Combining Lemma~\ref{lat-est} and Lemma \ref{prop:lattice} proves the LLN upper bound stated in Proposition~\ref{prop:UB}. Moreover, using \eqref{comparaison_reseau_2}, we obtain a quantitative tail estimate}:
\begin{corollary}\label{cor:UB}
For all $\alpha > 1$, we have
\begin{equation*}
\PNbeta \left[ \max_{z \in \Dd(0,r)} \left| \PNep(z) \right| \geq \frac{\alpha \log N}{\sqrt{\beta}}  \right]  \leq \corT{C(\alpha) \exp\left(- \frac{(\alpha -1)}{4} \log N \right)}.
\end{equation*}
\end{corollary}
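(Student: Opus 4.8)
The plan is to assemble the corollary from two ingredients already established: the algebraic tail bound on the lattice, \eqref{eq:tail_on_lattice} in Lemma~\ref{lat-est}, and the lattice-to-disk comparison \eqref{comparaison_reseau_2} extracted from the proof of Proposition~\ref{prop:lattice}. The only extra care needed compared to \eqref{eq:lattice_vs_disk} is to track exponents quantitatively and to check that the super-polynomially small union-bound error does not spoil the algebraic rate.

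Concretely, fix $\alpha > 1$ and pick an auxiliary exponent $\alpha' \in \left(1, \min(\alpha, 2)\right)$. Following \eqref{choixdelta}, choose $\delta = \delta(\alpha') \in \left(0, \tfrac{\alpha'^2 - 1}{2}\right)$; this fixes the sub-microscopic lattice $\Latt = (\elld \Z)^2$ with $\elld = N^{-1/2 - \delta}$, and the number $\nu_0 := \alpha'^2 - 1 - 2\delta$ is strictly positive. Lemma~\ref{lat-est} applied with $\alpha'$ gives
\begin{equation*}
\PNbeta\left[ \max_{z' \in \Latt \cap \Dd(0,r)} |\PNep(z')| \geq \frac{\alpha' \log N}{\sqrt\beta}\right] \leq \exp\left( - \nu_0 \log N + O(1)\right).
\end{equation*}
On the other hand, \eqref{comparaison_reseau_2} states that for each fixed $z' \in \Latt \cap \Dd$ one has $\PNbeta\left[\max_{z : |z - z'| \leq 4\elld} |\PNep(z) - \PNep(z')| \geq 1\right] \leq \exp(-N^{\delta/2})$, and a union bound over the $O(N^{1+2\delta})$ lattice points of $\Dd_r$ shows that, outside an event of probability $O\!\left(N^{1+2\delta} e^{-N^{\delta/2}}\right)$, every $z \in \Dd(0,r)$ admits a lattice point $z'$ with $|\PNep(z)| \leq 1 + \max_{z'' \in \Latt \cap \Dd_r} |\PNep(z'')|$.

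Combining the two events: on the complement of both, $\max_{z \in \Dd(0,r)} |\PNep(z)| < 1 + \frac{\alpha' \log N}{\sqrt\beta} < \frac{\alpha \log N}{\sqrt\beta}$ for $N$ large enough, since $\alpha' < \alpha$. Therefore
\begin{equation*}
\PNbeta\left[ \max_{z \in \Dd(0,r)} |\PNep(z)| \geq \frac{\alpha \log N}{\sqrt\beta}\right] \leq \exp\left(-\nu_0 \log N + O(1)\right) + O\!\left(N^{1+2\delta} e^{-N^{\delta/2}}\right),
\end{equation*}
and since $N^{1+2\delta} e^{-N^{\delta/2}}$ decays faster than any power of $N$, the right-hand side is $\exp(-\nu \log N + O(1))$ with, say, $\nu = \nu_0/2 > 0$, which proves the claim. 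There is no substantive obstacle here: the sole point to verify is precisely that the union-bound error from the polynomially many lattice points, being super-polynomially small, stays below the dominant algebraic term $\exp(-\nu_0 \log N + O(1))$; everything else is bookkeeping that merges \eqref{eq:tail_on_lattice} with \eqref{comparaison_reseau_2}.
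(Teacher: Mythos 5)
Your proposal is correct and follows exactly the route the paper takes: it combines the lattice tail bound \eqref{eq:tail_on_lattice} of Lemma~\ref{lat-est} with the uniform lattice-to-disk comparison \eqref{comparaison_reseau_2} and a union bound over the polynomially many lattice points, whose super-polynomially small error is dominated by the algebraic term. Your additional bookkeeping (choosing $\alpha' \in (1,\min(\alpha,2))$ so that Lemma~\ref{lat-est}, stated only for exponents in $(1,2)$, applies, and fixing $\nu$ from $\alpha'^2-1-2\delta>0$) just makes explicit what the paper leaves implicit, so nothing further is needed.
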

\begin{proof}
Combining \eqref{eq:tail_on_lattice} and \eqref{comparaison_reseau_2} and choosing $\delta = \alpha -1$ we obtain:
\begin{equation*}
\PNbeta \left[ \max_{z \in \Dd(0,r)} \left| \PNep(z) \right| \geq \frac{\alpha \log N}{\sqrt{\beta}}  \right] \leq \exp\left( - \frac{(\alpha -1)}{2} \log N + O(1) \right) +  \exp\left(- N^{(\alpha-1)/4}\right),
\end{equation*}
which yields the claim.
\end{proof}

\subsection{Uniform control of fluctuations for smooth linear statistics}
As a byproduct, we obtain uniform bounds for the fluctuations of linear statistics for a class of $C^2$ test functions with, say, $|\cdot|_2 \le 1$. \corT{This can be thought of as an analogue of Lemma \ref{lem_fluct_uniform} for test functions that are smoother. The basic idea is to use integration by parts to translate the question of bounding $\LN[f]$ into a bound on $\Pot$ times $|f|_\2$.} \corT{Recall that $r < 1$ is fixed and let}
\begin{equation*}
\mathcal{F}_N : = \left\lbrace f \in C^2({\mathrm{D}_r}), \text{ $f$ {is supported in some disk} $\Dd(z, \ell)$ \text{ with } $\ell \ge \lambda N^{-1/2}$, and $|f|_2 \le \ell^{-2}$}\right\rbrace,
\end{equation*}
where $\lambda$ is the constant depending only on $\beta$ chosen above in~\eqref{eq:choix_lambda}.
\begin{proposition}\label{prop:unibound}
For any $k\in\N$, there is a constant $C_k = C_k(\beta)$ so that if $N$ is sufficiently large (depending on $\beta$, $k$ and $r$),
\begin{equation*}
\PNbeta \left[ \sup_{f\in\mathcal{F}_N} \left| \LN(f) \right| \geq C_k \log N  \right]  \leq N^{-k}
\end{equation*}
\end{proposition}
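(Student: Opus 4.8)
The strategy is to reduce the uniform bound over the infinite-dimensional class $\mathcal{F}_N$ to a union bound over a discretization, using the already-established results of Section~\ref{sec-UB}. First I would observe that for $f \in \mathcal{F}_N$ supported in $\Dd(z,\ell)$ with $|f|_2 \le \ell^{-2}$, writing $\varphi$ for a solution of $\Delta \varphi = f - \bar f$ (where $\bar f$ corrects the mass of $\Delta$, if needed — but here one can also work directly since $\int f$ need not vanish and the relevant quantity is $\int \Delta$ of a potential, so I would instead decompose $f$ against the building blocks $\rhoep \star \logz$ and $\chi$). A cleaner route: note that $\LN(f) = \int f \, \d(\bXN - N\mm_0)$, and represent $f$ via its Laplacian's potential. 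Actually the most direct approach is to bound $\LN(f)$ pointwise in the configuration using the local energy: by the configuration-wise inequality already invoked in the proof of Lemma~\ref{lem_fluct_uniform} (see \cite[Lemma B.5]{armstrong2019local}), $|\LN(f)| \preceq |f|_{\1}\, \ell \, \EnerPts(z,\ell)^{\hal}$, but this only gives $C_k = $ something via $\EnerPts \preceq N\ell^2$ in exponential moments, yielding fluctuations of order $(N\ell^2)^{\hal} \ge 1$, which is too weak to get $\log N$ uniformly — so one genuinely needs the CLT-level estimates. Therefore I would instead proceed through Corollary~\ref{cor:UB}: any $f \in \mathcal{F}_N$ can be written as $f = \Delta \varphi_f$ with $\varphi_f = \log|\cdot| \star f / (2\pi)$ suitably interpreted, and $\varphi_f$ is a linear combination (an integral) of shifted regularized logarithms, so $|\LN(f)|$ is controlled by $\sup_z |\PNep(z)|$ plus lower-order energy terms. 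The bound $\sup_{z\in\Dd_r}|\PNep(z)| \le C\log N$ with probability $\ge 1 - \exp(-\nu \log N)$ from Corollary~\ref{cor:UB} then transfers.

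The key steps in order are: (1) Fix $f \in \mathcal{F}_N$ supported in $\Dd(z,\ell)$. Write $f = \fA + \fB$-type decomposition is not needed; instead note $\LN(f)$ can be expressed through the regularized potential by Fubini: $\LN(f) = \int_{\Dd(z,\ell)} \LN(\delta_y \text{-regularization}) f(y)\,\d y$ up to mollification errors bounded by energy terms of order $N\ell^2 \cdot$(small). More precisely, using that $f = \rho_\epsilon \star f'$ for an appropriate $f'$ with $|f'|_\0 \preceq \ell^{-2}$ when $\epsilon \le \ell$, we get $\LN(f) = \int f'(y) \PNep(y)\,\d y / (\text{const}) + O(N\ell^2 \cdot \ell^{-2}\cdot\epsilon^2 \cdot \ldots)$, hence $|\LN(f)| \preceq \ell^2 \cdot \ell^{-2} \cdot \sup_{y \in \Dd(z,\ell)}|\PNep(y)| + O(1) \preceq \sup_{y\in\Dd_r}|\PNep(y)| + O(1)$. (2) Apply Corollary~\ref{cor:UB} with a fixed $\alpha > 1$ to control $\sup_{y\in\Dd(0,r')}|\PNep(y)|$ for $r' $ slightly larger than $r$: with probability $\ge 1 - \exp(-\nu\log N + O(1))$ this sup is $\le \frac{\alpha \log N}{\sqrt\beta}$. (3) On this event, $\sup_{f\in\mathcal{F}_N}|\LN(f)| \le C \log N$ for a suitable $C = C(\beta)$. (4) Choose $\alpha$ (equivalently $\nu$) large enough — iterate or simply take $\alpha$ depending on $k$ — so that $\exp(-\nu\log N + O(1)) \le N^{-k}$ for $N$ large; since $\nu$ can be made arbitrarily large by taking $\alpha$ large in Corollary~\ref{cor:UB} (at the cost of enlarging $C_k$), this gives the claim with $C_k = C_k(\beta)$.

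The main obstacle is Step (1): making precise the reduction of an arbitrary smooth test function $f$ supported at scale $\ell$ to the pointwise values of the regularized potential $\PNep$, uniformly over all centers $z$ and all scales $\ell \ge \lambda N^{-1/2}$, while keeping the mollification/discretization errors genuinely $O(\log N)$ and not larger. One has to be careful that $f$ need not have $\int f = 0$ nor $\int \Delta(\text{its potential}) = 0$; but since $\LN$ is applied to $f$ directly (not to a potential), $\int f$ is harmless — it only contributes a deterministic shift absorbed in $N\int f \d\mm_0$, and the fluctuation $\sum_i f(x_i) - N\int f\,\d\mm_0$ is what we bound. The representation $f(y) = \int K_\epsilon(y,x)\,(\text{something})$ needs $\epsilon$ chosen as the microscopic scale from~\eqref{eq:choix_lambda}; since $\ell \ge \lambda N^{-1/2} = \epsilon$, writing $f = \rho_\epsilon \star g$ with $g = \rho_\epsilon^{-1}\star f$ is not literally possible, so instead one should bound $|\LN(f) - \LN(\rho_\epsilon \star f)|$ using the smoothness $|f|_2 \le \ell^{-2}$ and a local-law energy estimate (the difference is a Lipschitz function at scale $\ell$ times $\epsilon^2/\ell^2 \le 1$, controlled by Lemma~\ref{lem_fluct_uniform} uniformly, giving $O((N\ell^2)^{\hal}\cdot \epsilon^2\ell^{-2})$ which for $\ell$ mesoscopic is $\preceq (N\ell^2)^{\hal}/(N\ell^2) \le 1$, and for $\ell \asymp \epsilon$ is $\preceq (N\epsilon^2)^{\hal}\preceq 1$), and then $\rho_\epsilon\star f$ is a superposition of $\tlog_z$'s which lets $\PNep$ enter. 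A second, more routine obstacle is the quantification in Step (4): Corollary~\ref{cor:UB} as stated gives a $\nu$ depending on $\alpha$ but does not assert $\nu \to \infty$ as $\alpha\to\infty$; one should either re-examine the proof of Lemma~\ref{lem:one_point_estimate} (where the exponent is $-\alpha^2\log N + O(1)$, so taking $\alpha$ with $\alpha^2 > k + 1 + 2\delta$ suffices and $\delta$ can be taken small) to extract an explicit $\nu = \nu(\alpha) = \alpha^2 - 1 - 2\delta - o(1)$, or simply note that $\mathcal{F}_N$ at scale $\ell$ discretizes into polynomially-many ($\preceq N^{O(1)}$) test functions up to Lipschitz error, and apply the one-point bound~\eqref{eq:one_point} (with a union bound over scales $\ell \in \{\lambda 2^j N^{-1/2}\}$, of which there are $O(\log N)$) together with a within-cell comparison as in Proposition~\ref{prop:lattice}.
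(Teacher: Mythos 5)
Your proposal follows essentially the same skeleton as the paper's proof: mollify $f$ at the microscopic scale $\epsilon=\lambda N^{-1/2}$, write $\LN(f_\epsilon)=\int \Delta f\,\PNep$ by integration by parts so that $|\LN(f_\epsilon)|\le \max_{\Dd(0,r)}|\PNep|$, invoke Corollary~\ref{cor:UB}, and control the mollification error $|\LN(f)-\LN(f_\epsilon)|$. The one genuine difference is in that last step: the paper bounds it by combining $|f-f_\epsilon|_0\le(\epsilon/\ell)^2$ with the point-count local law \eqref{nbest} and a union bound over the $O(N)$ microscopic squares covering $\Dd(0,r)$, which produces a single configuration event (max $C_k\log N$ points per $\epsilon$-square) of probability $1-N^{-k}$ on which the error bound holds deterministically and simultaneously for \emph{all} $f\in\mathcal{F}_N$; this is what delivers both the uniformity in $f$ and the $N^{-k}$ rate at once. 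Your route through Lemma~\ref{lem_fluct_uniform} also works, but note two caveats: (i) that lemma is uniform only over functions supported in a \emph{fixed} disk $\Dd(z,\ell)$, so to cover all of $\mathcal{F}_N$ you still need an explicit union bound over a net of centers and dyadic scales with thresholds inflated by $\sim k\log N$ (you gesture at this only in your last paragraph); and (ii) the correct application gives an error of order $\frac{\epsilon}{\ell}(N\ell^2)^{1/2}=\epsilon N^{1/2}=O(1)$ rather than your stated $(N\ell^2)^{1/2}\epsilon^2\ell^{-2}$ — harmless, since both are $O(1)$, but the scaling should go through the Lipschitz constant $\epsilon\ell^{-2}$ of $f-f_\epsilon$, not its sup norm. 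Finally, your concern about the $k$-dependence is well founded — Corollary~\ref{cor:UB} as stated only provides a small $\nu$, and the paper hides the issue behind ``adjusting the constants $C_k$'' — but the fix is simpler than pushing $\alpha$ past $2$ or discretizing $\mathcal{F}_N$: rerun the Chebyshev-plus-lattice argument of Lemmas~\ref{lem:one_point_estimate}--\ref{lat-est} with threshold $C_k\log N$ and the \emph{largest admissible} $t\asymp\lambda^2\Cc^{-1}$ (rather than the optimizing $t=2\alpha\sqrt{\beta}$); the decay exponent then grows linearly in $C_k$, which beats the $N^{1+2\delta}$ lattice cardinality and the polynomial target $N^{-k}$ without touching $\lambda$, while Proposition~\ref{prop:lattice} and Proposition~\ref{prop:expg} contribute only super-polynomially small probabilities.
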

\begin{proof}
We use again $\epsilon = \lambda / \sqrt{N}$. Let $(\square_j = \square(z_j,\epsilon))_{j=1}^M$ be a collection of squares centered at points $z_j\in \epsilon \Z^2$ such that $\Dd(0,r) \subset \bigcup_{j=1}^M \square_j \subset \Dd(0,1-\delta)$ for a small $\delta>0$. {The collection can be chosen so that}
$M \leq \Cc_\beta N$ for some constant depending on $\beta$. Using the local laws \eqref{nbest} and a union bound, we deduce that for all $k\in\N$, there is a constant $C_k = C_k(\beta)$ such that if $N$ is sufficiently large,
\begin{equation}
\label{eventsquare}
\PNbeta \left[ \max_{j\le M} \bXN(\square_j) \le C_k \log N \right] \ge 1- N^{-k}.
\end{equation}
Let $f\in \mathcal{F}_N$, $\ell$ be the associated length scale, and let $f_\epsilon = f \star \rho_\epsilon$.
Since $\rho$ is a radial mollifier (with compact support in $\Dd$) and $|f|_2\le \ell^{-2}$, one has
\[
|f_\epsilon - f |_0  \le  (\epsilon/\ell)^2
\]
and both $f,f_\epsilon$ are compactly supported in $ \Dd(z, 2\ell)$  since $\ell\ge \epsilon$. One can tile $\Dd(z, 2\ell)$ with at most $16(\ell/\epsilon)^2$ squares of sidelength $\epsilon$, thus on the event introduced in \eqref{eventsquare} we have:
\begin{equation} \label{fapprox}
| \LN(f_\epsilon)-  \LN(f)| \le 16 C_k (1+\log N) .
\end{equation}
Moreover, by an integration by parts we can write $\int \Delta f \,  \PNep = \LN(f_\epsilon)$, and using again that $|f|_2\le \ell^{-2}$ and that  $\Delta f$ is supported in $\Dd(0,r)$, we deduce:
\[
|\LN(f_\epsilon)| \le \max_{z \in \Dd(0,r)} \left| \PNep(z) \right| .
\]
The RHS is controlled by Corollary~\ref{cor:UB} and, by \eqref{fapprox}, on the event introduced in \eqref{eventsquare} we can use this to control $\LN(f)$ \emph{uniformly} for all $f\in \mathcal{F}_N$.
Adjusting the constants $C_k$, this proves the claim.
\end{proof}

\section{Regularized multiplicative chaos: lower bound}
\label{sec-LB}
This section is devoted to the proof of the lower bound in Theorem \ref{theo:main}.

\subsection{{Reduction to a regularized version of the potential}}
Recall that  $\fze := \rhoep \star \logz - \g$, see \eqref{eq:g}, is a regularization of $\logz$ at scale $\epsilon>0$ and let $\mathcal{U} \subset \Dd$ be a (non-empty) open ball.
By Proposition~\ref{prop:expg}, we know that for any (small) $\delta>0$:
\begin{equation*}
\liminf_{N\to\infty}\P\left[ \sup_{z\in\mathcal{U}} \LN(\rhoep \star \logz)  \ge (\beta^{-1/2}-\delta) \log N \right] \ge \liminf_{N\to\infty} \P\left[ \sup_{z\in\mathcal{U}} \LN(\fze)  \ge (\beta^{-1/2}-\delta/2) \log N \right].
\end{equation*}
In addition, recall that $\Pot(z) = \LN(\logz)$ so that
\[
\LN(\rhoep \star \logz) =  \LN\bigg( \int \rhoep(x) \log_{z-x}(\cdot) {\d x} \bigg)
= \int \rhoep(x) \Pot(z-x) {\d x}.
\]
The right-hand side is the convolution of the function $\Pot$ {with a non-negative function of total mass $1$} and as such \corO{(since the maximum of the 
convolution
is less than or equal to the maximum of $\Pot$),}
\[
\max_{z\in \mathcal{K}}  \Pot(z) \ge \sup_{z\in \mathcal{U}}   \LN({\rhoep \star \logz}),
\]
where $\mathcal{K}$ is a closed ${\epsilon}$-neighborhood of $\mathcal{U}$.

Altogether, this implies that for any (small) $\delta>0$,
\begin{multline}
\label{LB1}
\liminf_{N\to\infty}\P\left[ \max_{z\in \mathcal{K}}  \Pot(z) \ge (\beta^{-1/2}-\delta) \log N \right] \\
\ge \liminf_{N\to\infty} \P\left[ \sup_{z\in\mathcal{U}}  \LN(\fze)  \ge (\beta^{-1/2}-\delta/2) \log N \right].
\end{multline}

Hence, it suffices to show that the probability on the RHS of \eqref{LB1} converges to 1 as $N\to\infty$. It turns out that this is the case if the scale $\epsilon(N) \le N^{\nu-1/2} $ for some $\nu>0$
sufficiently small (depending on $\delta>0$).
For technical reasons, we also need that $\epsilon(N) \gg N^{-1/2} $ is larger than the microscopic scale. We claim the following:

\begin{proposition} \label{prop:maxLB}
Let $\epsilon(N)$ be a sequence so that $\epsilon(N) \to 0$ as $N\to\infty$ in such a way that 
$\epsilon(N)\gg N^{-1/2} $.
Then, for any $\corO{\alpha<2}$,
\begin{equation}
\label{maxLB1}
\lim_{N\to\infty} \P\left[ \sqrt{\beta} \sup_{z\in\mathcal{U}}  \LN(\fze)  \ge \alpha \log \epsilon(N) ^{-1} \right]  = 1 .
\end{equation}
\end{proposition}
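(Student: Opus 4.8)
The plan is to deduce \eqref{maxLB1}, which we establish for every $\alpha<2$ (the threshold $2$ is the critical Gaussian multiplicative chaos (GMC) exponent of the relevant two-dimensional log-correlated field, and $\alpha<2$ is sharp), from the almost-sure non-degeneracy of a subcritical GMC measure together with a short first-moment estimate. Throughout we write $X_z:=\sqrt\beta\,\LN(\fze)$ for $z\in\mathcal U$ and $v:=\log\epsilon(N)^{-1}$; since $\fze$ is smooth in $(z,x)$, $z\mapsto X_z$ is continuous for every fixed configuration. Fix $r<1$ with $\overline{\mathcal U}\subset\Dd_r$; because $\epsilon(N)\to0$ and $\epsilon(N)\gg N^{-1/2}$, for $N$ large one has $\Cc N^{-1/2}\le\epsilon(N)\le\frac{1-r}{2}$ and $N\epsilon(N)^2\to\infty$, so that Proposition~\ref{prop:expo_fze} applies uniformly for $z\in\mathcal U$ and for all $|t|=O(1)$.

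\textbf{Step 1: the chaos measure carries macroscopic mass.} Fix $\gamma\in(\alpha,2)$ and let $\mu_\gamma(\d z):=\epsilon(N)^{\gamma^2/2}\,e^{\gamma X_z}\,\d z$ on $\mathcal U$ — a measure of the type \eqref{eq-mugamma}. Proposition~\ref{prop:expo_fze} with $t=\gamma\sqrt\beta$ gives $\Esp[\epsilon(N)^{\gamma^2/2}e^{\gamma X_z}]=e^{O(1)}$ uniformly in $z\in\mathcal U$, hence $\Esp[\mu_\gamma(\mathcal U)]\asymp|\mathcal U|$. The crucial input is that, because $\epsilon(N)\to0$, $\mu_\gamma(\mathcal U)$ converges in distribution to $\mu_\gamma^{\mathrm{GMC}}(\mathcal U)$, the mass on $\mathcal U$ of the GMC of the limiting log-correlated Gaussian field of covariance $\log\frac{1}{|z-z'|}+O(1)$; this is Proposition~\ref{prop:GMC}, which — following the strategy of \cite{LOS18,CFLW21} — is deduced from the asymptotics \eqref{asymp_mom} of the joint exponential moments of $\bigl(\LN(\varphi_{z_1,\epsilon}),\dots,\LN(\varphi_{z_k,\epsilon})\bigr)$ at distinct points $z_1,\dots,z_k$, themselves obtained by induction on $k$ from the two-scale transport estimate of Proposition~\ref{prop:compar2}. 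Since subcritical GMC is almost surely strictly positive on any fixed open set, for every $\eta>0$ there is a continuity point $c_\eta>0$ of the limit law with $\P[\mu_\gamma^{\mathrm{GMC}}(\mathcal U)>c_\eta]>1-\eta$, whence $\liminf_{N\to\infty}\P[\mu_\gamma(\mathcal U)>c_\eta]\ge1-\eta$.

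\textbf{Step 2: macroscopic mass forces a high point.} Fix $\theta\in(\alpha,\gamma)$. For $t'>0$ we bound $\mathbf 1\{X_z<\theta v\}\le e^{t'(\theta v-X_z)}$, and then Proposition~\ref{prop:expo_fze} at $t=(\gamma-t')\sqrt\beta=O(1)$ gives, uniformly in $z\in\mathcal U$,
\[
\Esp\!\left[\epsilon(N)^{\gamma^2/2}e^{\gamma X_z}\mathbf 1\{X_z<\theta v\}\right]\ \le\ \epsilon(N)^{\,\gamma^2/2-t'\theta-(\gamma-t')^2/2}\,e^{O(1)} .
\]
Choosing $t'=\gamma-\theta>0$ makes the exponent of $\epsilon(N)$ equal to $(\gamma-\theta)^2/2>0$, so the left-hand side tends to $0$; integrating over $z$ and using Markov's inequality, $\mu_\gamma(\{z\in\mathcal U:X_z<\theta v\})\to0$ in probability. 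On the intersection of $\{\mu_\gamma(\mathcal U)>c_\eta\}$ with $\{\mu_\gamma(\{X_z<\theta v\})\le c_\eta/2\}$, which has probability $\ge1-\eta-o(1)$, we get $\mu_\gamma(\{X_z\ge\theta v\})\ge c_\eta/2>0$; as $\mu_\gamma$ has a strictly positive continuous density with respect to Lebesgue measure, the set $\{X_z\ge\theta v\}$ then has positive Lebesgue measure, hence is non-empty, so $\sqrt\beta\sup_{z\in\mathcal U}\LN(\fze)=\sup_{z\in\mathcal U}X_z\ge\theta v\ge\alpha\log\epsilon(N)^{-1}$. Letting $N\to\infty$ and then $\eta\to0$ proves \eqref{maxLB1}.

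\textbf{Main obstacle.} The heart of the argument is Proposition~\ref{prop:GMC} used in Step 1: establishing GMC convergence of $\mu_\gamma(\mathcal U)$ for \emph{all} $\gamma$ up to the critical exponent $2$ — in particular across the ``$L^1$ phase'' $\gamma\in[\sqrt2,2)$, where a second-moment/Paley–Zygmund argument is unavailable and one must instead establish uniform integrability after a suitable truncation. This is what requires the sharp, CLT-type asymptotics \eqref{asymp_mom} of the joint exponential moments of $\LN$ at linear combinations of mollified shifted logarithms supported at distinct points and living at distinct scales, and it is exactly here that the two-scale comparison of Proposition~\ref{prop:compar2} (rather than the single-scale Proposition~\ref{prop:compar1}) is essential; Step 2 and the final deduction are elementary.
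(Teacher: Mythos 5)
Your argument is correct (modulo the sign typo discussed below) and rests on the same backbone as the paper: the whole weight is carried by the convergence of the regularized exponential measures to a subcritical GMC — Proposition~\ref{prop:MC}, itself obtained from the joint exponential-moment asymptotics \eqref{asymp_mom} via Proposition~\ref{prop:compar2} — together with the almost-sure positivity of subcritical GMC on open sets (Proposition~\ref{prop:GMC}). The one real difference is the last step: the paper converts GMC convergence into a lower bound on the maximum by citing \cite[Theorem 3.4]{CFLW21} as a black box, whereas your Step 2 reproves that implication directly with the tilting/first-moment estimate showing that $\mu_\gamma$ puts vanishing mass on $\{X_z<\theta\log\epsilon(N)^{-1}\}$ for $\theta<\gamma$, so a non-degenerate limiting mass forces a $\theta$-high point; this is exactly the mechanism behind the cited theorem and makes the deduction self-contained at no extra cost. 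Three remarks. First, you prove the statement for $\alpha<2$: this is the intended (and only correct) version — the ``$\alpha>2$'' in the proposition, like the ``$\bar\alpha>\beta^{-1/2}$'' just below \eqref{maxLB1}, is a sign typo, since the maximum of the regularized field is of order $2\log\epsilon(N)^{-1}$ and the application to Theorem~\ref{theo:main} uses $\alpha$ slightly below $2$. Second, a reference slip: the convergence of the Coulomb-gas measures is Proposition~\ref{prop:MC}, not Proposition~\ref{prop:GMC} (the latter is the Gaussian-side statement, which you correctly use for positivity). Third, you gloss over the passage from vague convergence in distribution to convergence of the mass of the open set $\mathcal{U}$, the discrepancy between your scale $\epsilon(N)$ and the dyadic scales $\ell(k)=e^{-k}$ of Proposition~\ref{prop:MC}, and the normalization $\epsilon^{\gamma^2/2}$ versus $\Esp e^{\gamma\Phi_k}$; all three are harmless (e.g.\ bound $\mu_\gamma(\mathcal{U})$ below by $\mu_\gamma(f)$ for a continuous $0\le f\le 1$ supported in $\mathcal{U}$, and note the normalizations differ by uniformly bounded factors), and the paper is no more explicit on these points, but a sentence acknowledging them would tighten Step 1.
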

Combining \eqref{LB1} and \eqref{maxLB1}, we get: $\lim_{N\to\infty}\P\left[ \sup_{z\in \mathcal{U}}  \Pot(z) \ge \bar\alpha \log N \right] =1$ for any value $\corO{\bar\alpha < \beta^{-1/2}}$, which yields the lower bound in Theorem \ref{theo:main}.

The proof of Proposition~\ref{prop:maxLB} relies on the theory of Gaussian multiplicative chaos and in particular on {\cite{CFLW21} and \cite{LOS18}.}
We review these results in the next section and present the main steps of the proof.

\subsection{Multiplicative chaos.}
We  introduce new notations.
Let $\ell(k)= e^{-k}$ for $k\in\N$ and set
\[
\Phi_{k}(z) := \sqrt{\beta}\, \LN(\varphi_{z,\ell(k)})  , \qquad z\in \Dd.
\]
For $\gamma>0$, define a sequence of random measure $(\boldsymbol\mu_k^\gamma)_{k\in\N}$ with density function
\begin{equation}
\label{eq-mugamma}
\mu_{k}^\gamma = \frac{e^{\gamma\Phi_k}}{\Esp e^{\gamma \Phi_k}} .
\end{equation}
This density obviously depends on the $N$ and $\beta$ even though it is not emphasized {in the notation.}

\medskip

According to Proposition~\ref{prop:expo_fze}, if  $\ell(k)\geq \Cc N^{-1/2}$, it holds uniformly for $x\in\mathcal{K}$,
\begin{equation} \label{bd1}
\Esp[ e^{\gamma \Phi_k(x)} ] = \exp\left(\frac{\gamma^2 k}{2}  +  O_N(1) \right)
\asymp \ell(k)^{-\gamma^2/2} .
\end{equation}
These asymptotics corresponds to {\cite[Assumptions 3.1]{CFLW21}.}
In fact, using the method developed in  \cite[Section 3]{CFLW21} and \cite[Section 2]{LOS18},
we will obtain the following  convergence result (with respect to the vague topology for positive measures on $\Dd$).

\begin{proposition} \label{prop:MC}
Let $n(N)$ be a sequence such that $n(N)\to\infty$ and ${\ell(n):=\ell(n(N))} \gg N^{-1/2}$ as $N\to\infty$.
Then for any $\corO{0<\gamma<2}$, $\boldsymbol\mu_{n(N)}^\gamma \to  \operatorname{GMC}_\gamma$ in distribution as $N\to\infty$ where $\operatorname{GMC}_\gamma$ is a Gaussian multiplicative chaos measure which is defined shortly. \end{proposition}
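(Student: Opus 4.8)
The plan is to deduce Proposition~\ref{prop:MC} from the general convergence criterion for regularized multiplicative chaos of \cite[Section~3]{CFLW21} (building on \cite[Section~2]{LOS18}), whose input is twofold: (i) the one-point exponential moment asymptotics \eqref{bd1}, which follows from Proposition~\ref{prop:expo_fze}; and (ii) the analogous asymptotics for the \emph{joint} exponential moments of $\Phi_{k_1}(x_1), \dots, \Phi_{k_m}(x_m)$ at finitely many points $x_i \in \mathcal{K}$ and scales with all $\ell(k_i) \gg N^{-1/2}$, with errors that are $o(1)$ uniformly in the configuration of points and scales. The latter is the content of \eqref{asymp_mom}/Proposition~\ref{prop:GMC}, whose proof is postponed; in the present argument we only use it as a black box. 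Together with Remark~\ref{rk:clt}, these asymptotics identify a limiting centered Gaussian log-correlated field $\Phi_\infty$ on $\Dd$ with covariance $\Esp[\Phi_\infty(x)\Phi_\infty(y)] = -\log|x-y| + \mathsf{g}(x,y)$, where $\mathsf{g}$ is a continuous symmetric function (the smooth correction coming from the background $\mm_0$, the function $\g$ and the confinement $\zeta$); $\operatorname{GMC}_\gamma$ is then the Gaussian multiplicative chaos $e^{\gamma \Phi_\infty - \frac{\gamma^2}{2}\Esp[\Phi_\infty^2]}\,\d x$ associated with $\Phi_\infty$, which is a.s.\ a non-degenerate Radon measure on $\Dd$ exactly in the subcritical range $\gamma \in (0,2)$ (since we are in dimension $2$).

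The convergence itself is obtained by a truncated second-moment argument, run exactly as in \cite[Section~3]{CFLW21}. Fix $f \in C_c(\Dd)$ with $f \ge 0$. For a truncation level $M>0$ one introduces a Kahane-type good event $G_M$ on which the coarse fields $\Phi_k$ do not exceed $M$ times their typical maximal height at every dyadic scale between $\ell(n)$ and $1$. On $G_M$ one computes, using \eqref{bd1} and the two-point case of Proposition~\ref{prop:GMC}, the first moment $\Esp[\mathbf{1}_{G_M}\boldsymbol\mu_n^\gamma(f)]$ and the second moment $\Esp[(\mathbf{1}_{G_M}\boldsymbol\mu_n^\gamma(f))^2]$: the point is that the normalized two-point exponential moment $\Esp[e^{\gamma\Phi_j(x)+\gamma\Phi_k(y)}]/(\Esp[e^{\gamma\Phi_j(x)}]\,\Esp[e^{\gamma\Phi_k(y)}]) \asymp \max(\ell(j),\ell(k),|x-y|)^{-\gamma^2}$, which is integrable against $f(x)f(y)\,\d x\,\d y$ with a bound uniform in $n$ and $N$ once one is on $G_M$ — the truncation precisely tames the divergent diagonal contribution that would otherwise spoil the bound for $\gamma \ge \sqrt2$ (for $\gamma < \sqrt2$ the truncation is not even needed, since $|x-y|^{-\gamma^2}$ is locally integrable in $\R^2$). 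One then checks that $\mathbf{1}_{G_M}\boldsymbol\mu_n^\gamma(f)$ is Cauchy in $L^2$ along the diagonal sequence $N \to \infty$, $n = n(N) \to \infty$, $\ell(n) \gg N^{-1/2}$, and identifies its limit with the correspondingly truncated $\operatorname{GMC}_\gamma(f)$, using that the limiting covariance is the pointwise limit of the covariances of the $\Phi_k$ (again Proposition~\ref{prop:GMC}). Finally, letting $M \to \infty$, the truncation is removed: $\sup_N \Esp[\mathbf{1}_{G_M^c}\boldsymbol\mu_n^\gamma(f)] \to 0$ by the same moment bounds together with the fact that, for $\gamma < 2$, a subcritical GMC puts no mass on the set of exceptionally thick points excluded by $G_M$. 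This yields vague convergence in distribution of $\boldsymbol\mu_{n(N)}^\gamma$ towards $\operatorname{GMC}_\gamma$; tightness is immediate from the uniform first-moment bound $\sup_N \Esp[\boldsymbol\mu_n^\gamma(\mathbf{1}_K)] < \infty$ for compact $K \subset \Dd$, which is part of \eqref{bd1}.

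The genuinely hard step is hypothesis (ii): proving the joint exponential moment asymptotics \eqref{asymp_mom} for the non-Gaussian \TDOCP\ with errors that are $o(1)$ uniformly over all configurations and down to (just above) the microscopic scale $N^{-1/2}$ — this is exactly what forces the coupling $n = n(N) \to \infty$, and is established by induction on the number of scales via the two-scale comparison of Proposition~\ref{prop:compar2}, as outlined in the sketch of the proof. Granting that input, the remaining work in the present proof is the verification that the \cite{CFLW21} machinery applies verbatim here; the only additional bookkeeping is to propagate the $N$-dependence through the truncated second-moment estimates so that all error terms vanish along the diagonal sequence.
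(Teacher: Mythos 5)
There is a genuine gap: you have inverted the division of labour between what can be cited and what must be proven. The paper's proof of Proposition~\ref{prop:MC} consists of (a) invoking the general convergence criterion \cite[Theorem 2.4]{CFLW21} (reproduced from \cite[Theorem 1.7]{LOS18}) as a black box, which reduces the statement to the joint exponential moment asymptotics \eqref{asymp_mom}, and (b) actually proving \eqref{asymp_mom}. Step (b) is the only model-specific, genuinely hard content: for ordered scales $k_1\le\cdots\le k_j\le n(N)$ one writes the Laplace transform as a ratio of partition functions via Lemma~\ref{lem:rewrite_Laplace}, identifies the Gaussian factor through the covariance identity \eqref{cov}, and then shows by induction on $j$ that
\begin{equation*}
\frac{\KNbeta(\mm_0+s\Delta\varphi_j)}{\KNbeta(\mm_0+s\Delta\varphi_{j-1})}=1+o(1),
\end{equation*}
by applying the two-scale comparison Proposition~\ref{prop:compar2} with $f_a=\gamma_j\,\rho_{x_j,\ell(k_j)}$ and $f_b=\gamma_j\,\g$, checking that the perturbed reference measure still satisfies the hypotheses \eqref{assum-2-Main-non-bound-m}, and disposing of the entropy terms as in Remark~\ref{rk:clt}; the uniformity in $x_1,\dots,x_j\in\Dd_r$ and in the scales, together with $\ell(n)\gg N^{-1/2}$, is exactly what makes the error $o(1)$. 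In your proposal this input is declared ``a black box whose proof is postponed'' and only gestured at (``induction via Proposition~\ref{prop:compar2}''), so the substantive part of the proof of Proposition~\ref{prop:MC} is asserted rather than carried out. You also mislabel it: the joint asymptotics is \eqref{asymp_mom}, not Proposition~\ref{prop:GMC}, which concerns the Gaussian measures $\boldsymbol\nu_k^\gamma$ and is quoted from the standard GMC literature.

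Conversely, the part you expand at length --- the truncated second-moment argument with Kahane-type good events --- is precisely the part the paper legitimately imports verbatim from \cite{CFLW21} and \cite{LOS18}; re-deriving it adds risk without adding content. In particular, as written your good event $G_M$ constrains the maxima of the non-Gaussian fields $\Phi_k$ over all dyadic scales, and the claim that this event is typical (and that discarding it costs negligible mass, which is needed to reach the full range $\gamma<2$ rather than only the $L^2$ phase $\gamma<\sqrt2$) does not follow from the one- and two-point asymptotics you invoke; in the cited works the truncation is set up so that only the finitely-many-point, multi-scale Laplace transform asymptotics \eqref{asymp_mom} together with \eqref{bd1} are required, which is exactly why those asymptotics --- and not a maximum control --- are the hypothesis to verify. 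The fix is structural: cite \cite[Theorem 2.4]{CFLW21} directly, and devote the argument to the induction establishing \eqref{asymp_mom} with the required uniformity.
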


\begin{remark} \label{rk:MC}
If one consider the sequence of random measures with densities
\(
\hat\mu_{N}^\gamma = \frac{e^{\gamma\sqrt{\beta} \Pot }}{\Esp e^{\gamma\sqrt{\beta} \Pot}}
\)
for $N\in\N$ instead of \eqref{eq-mugamma}, we expect that the result of Proposition~\ref{prop:MC} remains true, that is, for any $0<\gamma<2$,  $\boldsymbol{\hat\mu}_{N}^\gamma \to  \operatorname{GMC}_\gamma$ in distribution as $N\to\infty$ for a slightly different GMC (associated to the GFF with \emph{free boundary condition} on $\Dd$).
Note that the regime $0<\gamma<2$ corresponds to the whole GMC $L^1$-phase, as $\gamma=2$ is the critical value with our normalization.
\end{remark}

We now turn to the definitions of the random measures $(\operatorname{GMC}_\gamma)_{0<\gamma<2}$.
We consider the following log-correlated field.
\corO{Let $\mathcal{C}_c(\Dd)$ denote the space of smooth, compactly supported 
functions on $\Dd$.}

\begin{definition} \label{def:GMC}
Let  $\Psi$ be a (distribution-valued) Gaussian process on $\D$ with mean zero and the following correlation structure: for any $f,\corO{h} \in \mathcal{C}_c(\Dd)$ with $\int f = \int \corO{h} =1$,
\begin{equation} \label{GFF}
\Esp\big[\Psi(f) \corO{\Psi(h)}\big] 
:=  \frac1{(2\pi)^2} \iint \big(f(x)-\chi(x)\big)\big(\corO{h(z)}-\chi(z)\big) \log|x-z|^{-1} \d x \d z
\end{equation}
\corO{with $\chi$ a radially symmetric smooth function (independent of $N$) supported in $\Dd_r$ \corT{with $\int \chi = 1$}, as used in~\eqref{eq:g}.}
It is well-known that the RHS defines the covariance of a Gaussian process
(this also follows from the CLT of Remark~\ref{rk:clt} which implies that with $\varphi =- \Delta^{-1}(f-\chi) : z\mapsto 2\pi\int \log|x-z|^{-1} (f(x)-\chi(x)) \d x$, one has $\LN(\varphi) \to \Psi(f)/\sqrt\beta$ in distribution),
so that \corO{with $\g$ as in \eqref{eq:g},}
the field $\Psi$ has correlation kernel
\[
\Sigma(x,z) =  \log|x-z|^{-1} + \g(x) + \g(z) + c,
\]
where $c\in \R$ is a constant, \corO{that is the right hand side of \eqref{GFF} can be written as 
$$\iint f(x) h(z)\Sigma(x,z) \d z \d x.$$}

For any $k\in\N$, we define $\Psi_k :=  \rho_{\ell(k)} \star \Psi $. This is a (smooth) approximation of $\Psi$ as $k\to\infty$ and, for $\gamma>0$, we also let
$\boldsymbol\nu_{k}^\gamma$ be a random measure on $\D$ with density function
\[
\nu_{k}^\gamma(x) = \frac{e^{\gamma\Psi_k(x)}}{\Esp e^{\gamma \Psi_k(x)}}  ,  \qquad x\in \Dd .
\]
\end{definition}

Then, the following convergence result follows from the general theory of multiplicative chaos; e.g.~\cite{Berestycki17}.

\begin{proposition} \label{prop:GMC}
For any $\gamma <2$ (subcritical phase), the random measure $\boldsymbol\nu_{k}^\gamma \to \operatorname{GMC}_\gamma$ in probability as $k\to\infty$.
Moreover, for any (non-empty) open set $A \subset \Dd$,
$\operatorname{GMC}_\gamma(A)>0$ almost surely.
\end{proposition}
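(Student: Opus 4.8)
The plan is to obtain both assertions directly from the general theory of Gaussian multiplicative chaos, after checking that the field $\Psi$ of Definition~\ref{def:GMC} and its convolution approximation $\Psi_k=\rho_{\ell(k)}\star\Psi$ fit the standard hypotheses; note that, in contrast with the rest of the paper, no dependence on $N$ enters here, so this is a purely classical statement about a fixed Gaussian object. First I would record that the kernel $\Sigma(x,z)=\log|x-z|^{-1}+\g(x)+\g(z)+c$ is of the canonical form ``$-\log|x-z|$ plus a bounded continuous symmetric kernel'': indeed $\g$ solves $\Delta\g=2\pi\chi$ with $\chi\in C_c^\infty(\Dd)$ by \eqref{eq:g}, so $\g\in C^\infty(\R^2)$ and is bounded on $\Dd$, while $c$ is the fixed constant making $\Sigma$ positive semidefinite (its existence being precisely the fact, already invoked in the excerpt, that \eqref{GFF} defines a covariance). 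Hence the restriction of $\Psi$ to any compact subset of $\Dd$ lies in the class of log-correlated fields to which Kahane's theory applies.

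Next I would verify that $(\Psi_k)_{k\ge 1}$ is an admissible approximating sequence. Since $\rho$ is a radial $C^\infty$ mollifier with compact support, $\Psi_k$ is a.s.\ a smooth function and a centred Gaussian field with covariance $\Sigma_k(x,z)=\iint\rho_{\ell(k)}(x-u)\rho_{\ell(k)}(z-v)\Sigma(u,v)\,\d u\,\d v$. Two elementary scaling computations then give, uniformly on compact subsets of $\Dd$ and with $\ell(k)=e^{-k}$, that $\Sigma_k(x,x)=\log\ell(k)^{-1}+O(1)=k+O(1)$ and that $\Sigma_k(x,z)=\log\big(\max(|x-z|,\ell(k))\big)^{-1}+O(1)$, in particular $\Sigma_k(x,z)\to\Sigma(x,z)$ for $x\neq z$, together with the pointwise comparison bounds underlying Kahane's convexity inequalities. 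These are exactly the hypotheses of the convergence theorems of the ``elementary approach'' of \cite{Berestycki17} (equivalently, of Kahane's or Shamov's frameworks), convolution with a fixed smooth mollifier being the textbook example therein. Invoking that theorem in dimension $d=2$: since $\Esp\Psi_k(x)^2=k+O(1)\sim\log\ell(k)^{-1}$ and the density of $\boldsymbol\nu_k^\gamma$ equals $e^{\gamma\Psi_k(x)-\frac{\gamma^2}{2}\Esp\Psi_k(x)^2}$, the subcritical ($L^1$) phase is precisely $\gamma^2<2d$, i.e.\ $0<\gamma<2$, and there $\boldsymbol\nu_k^\gamma$ converges in probability (indeed in $L^1$) in the vague topology on positive measures on $\Dd$ to a non-degenerate random measure, independent of $\rho$; this limit is $\operatorname{GMC}_\gamma$ by definition.

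For the positivity statement, fix a non-empty open $A\subset\Dd$ and a closed ball $\bar B\subset A$. Uniform integrability of $(\boldsymbol\nu_k^\gamma(B))_k$ throughout the $L^1$ phase gives $\Esp[\operatorname{GMC}_\gamma(B)]=\lim_k\Esp[\boldsymbol\nu_k^\gamma(B)]=|B|>0$, so $\operatorname{GMC}_\gamma(B)>0$ with positive probability. To upgrade this to an almost sure statement I would use the scale decomposition of the $\star$-scale invariant part of $\Sigma$: writing $\Psi=\Psi^{\ge\epsilon}+\Psi^{<\epsilon}$ with the two pieces independent and carrying respectively the scales above and below $\epsilon$, one can couple things so that on $\bar B$
\begin{equation*}
\operatorname{GMC}_\gamma(\d x)=e^{\gamma\Psi^{\ge\epsilon}(x)-\frac{\gamma^2}{2}\Esp\Psi^{\ge\epsilon}(x)^2}\,\operatorname{GMC}_\gamma^{<\epsilon}(\d x),
\end{equation*}
where $\operatorname{GMC}_\gamma^{<\epsilon}$ is the GMC built from the fine field $\Psi^{<\epsilon}$. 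Since $\Psi^{\ge\epsilon}$ is a.s.\ continuous, the prefactor is a.s.\ bounded away from $0$ and $\infty$ on $\bar B$, so $\{\operatorname{GMC}_\gamma(B)=0\}$ coincides up to null sets with $\{\operatorname{GMC}_\gamma^{<\epsilon}(B)=0\}$; letting $\epsilon\downarrow 0$, this event lies in the tail $\sigma$-algebra $\bigcap_{\epsilon>0}\sigma(\Psi^{<\epsilon})$, hence by Kolmogorov's $0$--$1$ law has probability $0$ or $1$, and by the previous sentence it cannot be $1$. Therefore $\operatorname{GMC}_\gamma(A)\ge\operatorname{GMC}_\gamma(B)>0$ almost surely (alternatively, the full-support property of subcritical GMC is recorded as such in \cite{Berestycki17}). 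The only genuinely technical points are the verification in the second paragraph that the convolution mollification with our fixed $\rho$ meets the precise hypotheses of the chosen convergence theorem (uniformity of the $O(1)$ errors and the comparison bounds for Kahane's convexity arguments) and the tail-triviality input in the positivity step; both are standard in the GMC literature but are where one must actually do work rather than quote a statement verbatim.
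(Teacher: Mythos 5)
Your proposal is correct and follows essentially the same route as the paper, which proves Proposition~\ref{prop:GMC} simply by invoking the general theory of Gaussian multiplicative chaos (citing \cite{Berestycki17}), the point being that $\Sigma$ is $\log|x-z|^{-1}$ plus a smooth bounded remainder so that the standard subcritical convergence and full-support results apply. Your additional verifications (covariance structure of the mollified field, the $\gamma^2<2d$ threshold in $d=2$, and the $0$--$1$ law argument for positivity) are exactly the standard details the paper leaves to the cited reference.
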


Hence, in the \emph{subcritical phase} ($\gamma<2$), as a consequence of Proposition~\ref{prop:MC} and \cite[Theorem 3.4]{CFLW21}, we
obtain Proposition~\ref{prop:maxLB}.
Finally, the proof of  Proposition~\ref{prop:MC} will be a direct application of {\cite[Theorem 2.4]{CFLW21} (reproduced there from \cite[Theorem 1.7]{LOS18}).}
Namely, it suffices to show that for any $j\in\N$, $k_1, \cdots , k_j \le n(N)$ and $\gamma_1,\cdots, \gamma_j \in
\R$,
\begin{equation} \label{asymp_mom}
\Esp\big[ e^{\gamma_1 \Phi_{k_1}(x_1) + \cdots + \gamma_j \Phi_{k_j}(x_j) } \big] =
\Esp\big[ e^{\gamma_1 \Psi_{k_1}(x_1) + \cdots + \gamma_j \Psi_{k_j}(x_j) } \big] \big(1+{o(1)} \big)
\end{equation}
uniformly for $x_1, \cdots, x_j\in \Dd_r$ (here $r<1$ is fixed).
The error term is also uniform for $k_1, \cdots k_j \le n(N)$, in which case the condition
$\ell(n) \gg N^{-1/2}$ as $N\to\infty$ is crucial.
The next section is devoted to the proof of \eqref{asymp_mom}.

\subsection{Exponential moments asymptotics.}
We deduce \eqref{asymp_mom} from the estimates of Proposition~\ref{prop:compar2} by a simple induction.

First observe that since $\Phi_{k}(x) = \sqrt{\beta}\, \LN(\varphi_{x,\ell(k)})$ with $\Delta \varphi_{x,\ell}=  \rho_{x,\ell}- \g$,  as a consequence of Remark~\ref{rk:clt} (with $f_a = \rho_{x,\ell}=\rho((\cdot-x)\ell^{-1})\ell^{-2}$ and  $f_b = \g$ so that the conditions \eqref{condclt} hold), we have
\[
\Esp\big[ e^{\gamma \Phi_{k}(x)}\big]
=  \exp\left( \frac{-\gamma^2}{4\pi} \int_{\R^2}  \varphi_{x,\ell(k)} \Delta \varphi_{x,\ell(k)} \right) \big(1+o(1) \big)
\]
uniformly for $x\in \Dd_r$, $\ell(k) \gg N^{-1/2}$ and locally uniformly for $\gamma\in\R$.
Moreover (by definition of the 2d Green's function for $-\Delta$ and by Fubini's theorem), according to Definition~\ref{def:GMC}, we have for $k,n \in\N$  and $x,z \in\Dd$,
\begin{align}\notag
\frac{-1}{2\pi} \int  \varphi_{x,\ell(k)} \Delta \varphi_{z,\ell(n)}
& = \iint  \big(\rho_{x,\ell(k)}(u)-\g(u)\big)\big(\rho_{z,\ell(n)}(v)-\g(v)\big) \log|u-v|^{-1} \d u \d v  \\
\label{cov}& = \Esp[\Psi_k(x) \Psi_n(z)] .
\end{align}
Thus, since $\Psi$ is a (mean-zero) Gaussian process, we obtain for any $x\in \Dd$, $\gamma \in\R$ and  \corG{$\ell(k) \gg N^{-1/2}$}, 
\[
\Esp\big[ e^{\gamma \Phi_{k}(x)}\big] = \Esp\big[ e^{\gamma \Psi_{k}(x)}\big] \big(1+
{o(1)} \big)
\]
with the required uniformity. This establishes that \eqref{asymp_mom} holds when $j=1$.

We now proceed by induction to extend these asymptotics for any $j\in \N$ with $j\ge 2$.
Without loss of generality, we assume that $k_1\le \cdots \le  k_j \le n(N)$. Then, according to Lemma~\ref{lem:rewrite_Laplace}, we have
\[\begin{aligned}
\Esp\big[ e^{\gamma_1 \Phi_{k_1}(x_1) + \cdots + \gamma_j \Phi_{k_j}(x_j) } \big]
&= \Esp \big[ e^{\sqrt{\beta} \LN(\varphi_j)} \big]  \\
&= \exp\left( \frac{-1}{4\pi} \int_{\R^2} \varphi_j \Delta \varphi_j \right) \frac{\KNbeta(\mm_0 +  s \Delta \varphi_j)}{\KNbeta(\mm_0)}
\end{aligned}\]
where $\varphi_j =\sum_{i\le j}  \gamma_i \varphi_{x_i,\ell(k_i)} $ and $s= \frac{-1/\beta^{1/2}}{2\pi N}$.
Moreover, using \eqref{cov}, we obtain
\[
\Esp\Big[ e^{ \sum_{i\le j}\gamma_i \Phi_{k_i}(x_i)} \Big]
=\Esp\big[ e^{\gamma_1 \Psi_{k_1}(x_1) + \cdots + \gamma_j \Psi_{k_j}(x_j) } \big]  \frac{\KNbeta(\mm_0 +  s \Delta \varphi_j)}{\KNbeta(\mm_0)}
\]
and, by taking a ratio,
\[
\frac{\Esp\Big[ e^{ \sum_{i\le j}\gamma_i \Phi_{k_i}(x_i)} \Big]}{\Esp\Big[ e^{ \sum_{i\le j}\gamma_i \Psi_{k_i}(x_i)} \Big]}
= \frac{ \Esp\Big[ e^{ \sum_{i< j}\gamma_i \Phi_{k_i}(x_i)} \Big]} { \Esp\Big[ e^{ \sum_{i< j}\gamma_i \Psi_{k_i}(x_i)} \Big]}   \frac{\KNbeta(\mm_0 +  s \Delta \varphi_j)}{\KNbeta(\mm_0+  s \Delta \varphi_{j-1})} .
\]
We now apply Proposition~\ref{prop:compar2} with $f_a =  \gamma_j \rho_{x_j,\ell(k_j)}$ and
$f_b= \gamma_j\g$ as above.
We emphasize that the conditions \eqref{condclt} hold while the reference measure $\mm = \mm_0 +  s \Delta \varphi_{j-1}$ also satisfies $|\rm - 1| \le \Cc \ell(n)^2/N$.
In particular, in the regime $\ell(n) \ll \sqrt{N}$, we obtain
\[
\log\bigg( \frac{\KNbeta(\mm_0 +  s \Delta \varphi_j)}{\KNbeta(\mm_0+  s \Delta \varphi_{j-1})} \bigg)
= N \left(\frac{\beta}{4} - 1 \right) \left( \EE(\rm+  s \Delta \varphi_j) - \EE(\rm+  s \Delta \varphi_{j-1}) \right) + {o(1),}
\]
with the required uniformity.
Now, repeating the argument from Remark~\ref{rk:clt}, the entropy
$ \EE(\rm+  s \Delta \varphi_j)  = o(N^{-1})$ for any $j\in\N$ which implies that
\[
\frac{\KNbeta(\mm_0 +  s \Delta \varphi_j)}{\KNbeta(\mm_0+  s \Delta \varphi_{j-1})} = 1+{o(1)}  .
\]
By induction, this concludes the proof of the asymptotics \eqref{asymp_mom}.
\qed

\appendix
\section{Auxiliary proofs}
\subsection{Proof of Proposition \ref{prop:compar1} and Lemma \ref{lem:compar_nice_bound}}
\label{sec:ProofCompar1}
\corT{Before going into the proof, which requires a careful technical reading of \cite{serfaty2020gaussian}, let us present the main idea. Proposition \ref{prop:compar1} is a \emph{comparison} statement: one wants to compare two partition functions for the 2DOCP's associated to two closely related measures: the ``reference'' measure $\mm$ and its perturbation $\mm_s$ whose density $\rm_s$ is a perturbation of $\rm$ of the form $\rm_s = \rm + s f$. \\
In order to ``map'' one 2DOCP onto the other, one starts by mapping one measure onto the other. There are many ways to do that exactly i.e. to construct ``transportation'' maps that will exactly push-forward $\mm$ onto $\mm_s$, yet for technical reasons we prefer to introduce an ``approximate transport'' map $\Phi_s$ which sends $\mm$ to something close to $\mm_s$ (the difference is of second order in $s$ when one considers the densities) but has the upside of admitting an expression as $\Phi_s = \id + s \psi$ with $\psi$ independent of $s$. This can be thought of as looking at a linearization of exact transportation maps to first order in $s$. One needs to estimate the error induced by this approximation, which is done in \cite{serfaty2020gaussian}.\\
One is then left with another comparison between two partition functions associated to $\mm$ and $\tmms$ (the ``approximate'' perturbed measure obtained with the ``approximate'' transport map $\Phi_s$). It is natural to extend $\Phi_s : \R^2 \to \R^2$ into a change of variables on the whole phase space $(\R^2)^N$ by acting coordinate-wise. Applying this change of variables in the integral defining the partition function, one is led to consider:
\begin{enumerate}
\item The Jacobian of the diffeomorphism.
\item The effect of $\Phi_s$ on the energy.
\end{enumerate}
The first point is fairly easy to deal with. The second point is much more subtle and was analyzed in \cite{LebSerCLT,bauerschmidt2019two,serfaty2020gaussian}, the result being an expansion (in $s$) of the energy along a transport of the form $\Phi_s = \id + s \psi$ with good controls on each term of the expansion. The first order in $s$ corresponds to a term that was called the ``anisotropy'' in \cite{LebSerCLT,serfaty2020gaussian} or the ``angle term'' in \cite{bauerschmidt2019two} and is absolutely central for the analysis. We import the relevant results in their up-to-date formulation of \cite{serfaty2020gaussian}.\\
An additional complication is the following fact: a purely ``analytic'' study of the anisotropy term yields a certain bound on it which is enough for many purposes, however, for an optimal understanding of the ratio of partition functions (which is needed here) this deterministic bound fails to be good enough. 
In both \cite{LebSerCLT} and \cite{bauerschmidt2019two}, one resorts to a probabilistic trick which can be summarized as follows: on the one hand, the anisotropy term appears in the expansion of the energy and thus in the comparison of partition functions as a function of $s$. On the other hand, by completely other means, it is possible to get an expression for the \emph{same ratio of partition functions} (when $s$ is large). To leading order, this alternative expression does \emph{not contain anything like the anisotropy term}, which has no choice but to be contained in the error term. One can thus conclude that the anisotropy is ``smaller than it seems'' \corG{(in the sense of exponential moments)} when $s$ is large, but since this quantity is linear in $s$ it is not hard to get the same conclusion for $s$ of the size that is relevant to us.
}

\subsubsection{Proof of Proposition \ref{prop:compar1}}
We combine the following tools from \cite{LebSerCLT,serfaty2020gaussian}.
\begin{enumerate}
\item Approximate transport and estimates on the approximation error.
\item Comparison of partition functions along  the approximate transport \& the anisotropy term.
\item Relative expansion of partition functions.
\item ``Serfaty's trick'' for proving smallness of the anisotropy term.
\end{enumerate}
{We next review the main arguments.}

\paragraph{\emph{1. Approximate transport.}}
By assumption, the perturbation $f$ is supported in $\Dd(z, \ell)$ and thus in a square $\square(z, \ell)$ of sidelength $\ell$. Using \cite[Lemma 4.8]{serfaty2020gaussian} together with \cite[(4.31)]{serfaty2020gaussian} and applying our assumptions \eqref{assum-Main-non-bound-m} and \eqref{assum-Main-non-bound-f}, we can find a vector field $\psi$, supported on $\square(z, \ell)$, such that
\begin{equation}
\label{assum_psi}
- \div(\rm \psi) = f, \quad |\psi|_\kk \leq \bar \Cc \Cc \ell^{-1 - \kk}, \text{ for } \kk = 0, 1, 2, 3,
\end{equation}
where $\Cc$ is as in the statement and $\bar \Cc$ is a universal constant.

For $s \in \R$, we define a map $\Phi_s$ and another probability measure $\tmms$ by:
\begin{equation}
\label{approx_objects}
\Phi_s := \id + s \psi, \quad \tmms := \Phi_s \# \mm.
\end{equation}
\corT{Here and below, we write $\#$ to denote the push-forward of a measure.}
In view of \eqref{assum_psi} we can guarantee that $|\Phi_s - \id|_\1 \leq \hal$ as long as the parameter $s$ is chosen smaller than $\frac{1}{\Cc} \ell^2$ for some $\Cc$ large enough, a condition which is implied by the stronger constraint \eqref{eq:condi_s} ({recall that $N \ell^2$ is always larger than $1$}). We think of $\Phi_s$ as an \emph{approximate transport map}, pushing $\mm$ forward not quite onto $\mms$ (whose density is $\rm + s f$) but rather on $\tmms$. The following lemma quantifies the error in terms of partition functions.

\begin{lemma}
\label{lem:ApproxError}
The partition functions associated to $\tmms$ and $\mms$ are close:
\begin{equation*}
\left|\log \KNbeta(\tmms) - \log \KNbeta(\mms) \right| = O\left(s^2 N \ell^{-2} \right),
\end{equation*}
and so are their relative entropies:
\begin{equation*}
\left| \EE(\trms) - \EE(\rms) \right| = O\left(s^2 N \ell^{-2}\right),
\end{equation*}
with implicit constants depending only on \eqref{assum_psi} and $\beta$.
\end{lemma}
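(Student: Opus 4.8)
The plan is to recognize $\tmms$ and $\mms$ as two probability measures that coincide with $\mm$ outside a square of sidelength $O(\ell)$ and whose densities differ only by $O(s^2 \ell^{-4})$, and then to feed this into the ``relative expansion'' comparison of partition functions imported from \cite{serfaty2020gaussian} (Lemma~\ref{lem:relative_expansion}, see \eqref{compa_relat}). \emph{First}, I would set up the local picture: recall from \eqref{approx_objects} that $\tmms = \Phi_s \# \mm$ with $\Phi_s = \id + s \psi$, where $\psi$ is supported on $\square(z, \ell)$ and obeys \eqref{assum_psi}. Under \eqref{eq:condi_s}, taking $\Cc'$ large (depending only on $\beta, \Cc$), one has $|s\psi|_1 \le \hal$ and, since $\ell \ge \trhob$, the displacement $|\Phi_s - \id|_0 \preceq s \ell^{-1}$ is at most $\ell$; hence $\Phi_s$ is a $C^2$ diffeomorphism of $\R^2$ equal to $\id$ off $\square(z, \ell)$ and mapping $\square(z, \ell)$ into some $\square(z, C\ell)$. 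Then $\tmms$ has density $\trms = (\rm \circ \Phi_s^{-1})\,|\det \D \Phi_s^{-1}|$, which coincides with $\rm$ outside $\square(z, C\ell)$ and, since $\hal \le \rm \le 2$, stays in a fixed compact subinterval of $(0, \infty)$ there, as does $\rms = \rm + s f$; both measures are probabilities and agree with $\mm$ outside $\square(z, C\ell)$.

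\emph{Second} --- and this is the heart of the matter --- I would Taylor-expand $s \mapsto \trms$. Using the continuity equation $\partial_s \trms = -\div\!\big(\trms\, (\psi \circ \Phi_s^{-1})\big)$, one reads off $\trms|_{s=0} = \rm$ and $\partial_s\trms|_{s=0} = -\div(\rm \psi) = f$, so Taylor's formula with integral remainder gives $\trms - \rms = \int_0^s (s-u)\,\partial_u^2 \trms\, \d u$. The estimate to establish is that every term arising in $\partial_u^2 \trms$ scales like $\ell^{-4}$ --- this uses $|\psi|_\kk \preceq \ell^{-1-\kk}$ for $\kk \le 3$ from \eqref{assum_psi}, $|\rm|_\kk \preceq \ell^{-\kk}$ for $\kk \le 3$ from \eqref{assum-Main-non-bound-m}, and the fact that, thanks to \eqref{eq:condi_s}, $u\,\D\psi$ is a uniformly small perturbation of $\id$ for $|u| \le |s|$, so that $\Phi_u^{-1}$ and its first derivatives stay uniformly controlled. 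Granting $\sup_{|u| \le |s|} |\partial_u^2 \trms|_0 \preceq \ell^{-4}$, I conclude $|\trms - \rms|_0 \preceq s^2 \ell^{-4}$, with the difference supported on $\square(z, C\ell)$.

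\emph{Third}, I would conclude. For the entropies, since $\rms, \trms$ stay in a fixed compact subinterval of $(0,\infty)$ and coincide outside $\square(z, C\ell)$, the map $t \mapsto t \log t$ is Lipschitz on the relevant range, whence $|\EE(\trms) - \EE(\rms)| \preceq \int_{\square(z, C\ell)} |\trms - \rms| \preceq \ell^2 \cdot s^2 \ell^{-4} = s^2 \ell^{-2}$, which is $O(s^2 N \ell^{-2})$. For the partition functions, I would apply Lemma~\ref{lem:relative_expansion} to $\tmms$ and $\mms$ (probability measures agreeing outside a square of sidelength $O(\ell)$, each satisfying up to constants the regularity bounds of the first step), which gives $\log \KNbeta(\tmms) - \log \KNbeta(\mms) = N(\tfrac\beta4 - 1)(\EE(\trms) - \EE(\rms)) + \mathrm{error}$, where the error term --- governed by the $C^0$ (and, if the precise statement requires it, $C^2$) size $O(s^2 \ell^{-4})$ of the perturbation on a disk of radius $O(\ell)$ --- is $O(s^2 N \ell^{-2})$. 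Combined with the entropy bound, this yields $|\log \KNbeta(\tmms) - \log \KNbeta(\mms)| = O(s^2 N \ell^{-2})$.

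The main obstacle is the bookkeeping in the second step: $\partial_u^2 \trms$ contains several terms (two derivatives landing on $\rm \circ \Phi_u^{-1}$, two on the Jacobian, or one on each), and each must be shown to be $O(\ell^{-4})$ uniformly for $|u| \le |s|$ --- this is exactly where \eqref{eq:condi_s} is used, to keep $\Phi_u^{-1}$ and its derivatives tame. One should also verify, against the precise form of Lemma~\ref{lem:relative_expansion}, that its error term really does scale down to $O(s^2 N \ell^{-2})$ for a perturbation this small; the quadratic ($s^2$) gain from the second step is what makes this possible, since a merely $O(s)$-close pair would only give an $O(s N \ell^{-2})$ bound.
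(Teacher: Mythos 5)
Your second step (the Taylor expansion of the pushforward density giving $|\trms - \rms|_0 \preceq s^2 \ell^{-4}$, supported in a square of sidelength $O(\ell)$) and the entropy estimate derived from it are correct and are essentially what the paper does: it cites \cite[Lemma 5.1]{serfaty2020gaussian} for exactly this bound (and for the corresponding bounds on $|\trms - \rms|_\kk$, $\kk=1,2$) and then plugs the $C^0$ bound into the definition of $\EE$.

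The genuine gap is in the partition-function half of your third step. Lemma~\ref{lem:relative_expansion} cannot deliver the claimed $O(s^2 N \ell^{-2})$: its error term is $O\bigl((N\ell^2)^{1/2}(1+\log(N\ell^2))^{1/2}\bigr)$, a quantity which is completely independent of the size of the perturbation $\trms-\rms$ and does not shrink as $s\to 0$. In the regime where the lemma is actually used ($s$ of order $N^{-1}$, $\ell$ mesoscopic), one has $s^2 N \ell^{-2}=o(1)$ while $(N\ell^2)^{1/2}$ diverges, so the route through Lemma~\ref{lem:relative_expansion} would inject an $s$-independent error that destroys the linear-in-$s$ error structure of Proposition~\ref{prop:compar1} (and hence the CLT-level precision needed later). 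The verification you flag at the end therefore fails. What is needed instead, and what the paper uses, is a comparison of partition functions whose error is controlled \emph{by the difference of the two densities themselves}: this is \cite[Lemma 4.9]{serfaty2020gaussian}, a direct comparison of $\log\KNbeta(\tmms)$ and $\log\KNbeta(\mms)$ in terms of $\tmms-\mms$ (together with the $C^1,C^2$ bounds on $\trms-\rms$ from \cite[Lemma 5.1]{serfaty2020gaussian}), which turns the $O(s^2\ell^{-4})$ closeness on a region of area $O(\ell^2)$ into the stated $O(s^2 N\ell^{-2})$. Lemma~\ref{lem:relative_expansion} is reserved in the paper for a different purpose (steps 3 and 4 of the proof of Proposition~\ref{prop:compar1}, where it is applied at the ``large'' value $s=\sstar$, for which its fixed error is acceptable).
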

\begin{proof}
The first point follows from combining \cite[Lemma 5.1]{serfaty2020gaussian}, which bounds $|\trms - \rms|_{\kk}$ for $\kk = 1, 2$ in terms of the norms of $\psi$ (controlled in \eqref{assum_psi}) and $\mm$ (controlled by assumption), and \cite[Lemma 4.9]{serfaty2020gaussian}, which states a direct comparison of the partition functions in terms of $\tmms - \mms$. To prove the second point, we use again \cite[Lemma 5.1]{serfaty2020gaussian}, which bounds $|\trms - \rms|_{\0}$ by $O(s^2 \ell^{-4})$, and plug that estimate into the definition of $\EE$.
\end{proof}

\paragraph{\emph{2. Comparison of partition functions along a transport.}}
\newcommand{\ErrorF}{\mathsf{ErrorF}}
The so-called ``anisotropy term'' was introduced in \cite{LebSerCLT}, cf.~also the ``angle term'' in \cite[Section 8]{bauerschmidt2019two}. We refer to \cite[Section~4]{serfaty2020gaussian} for a careful study of its properties. It corresponds to the first-order correction to the energy when one pushes \emph{both the configuration and the background measure} by a small perturbation of the identity map. Here we will not go into the details and we treat the anisotropy as a black box. The key estimates that we need to import are contained in the following lemma. Let $s, \psi, \Phi_s, \tmms$ be as above (in particular $\psi$ is  supported on $\square(z, \ell)$) and recall that $\EnerPts(z, \ell)$ (defined in \eqref{def:EnerPts}) controls both the energy and the number of points at scale $\ell$ near $z$, and is typically of order $N\ell^2$.

\begin{lemma}
\label{lem:AAAni}
There exists a term $\AA[\psi, \mm, \XN]$ (independent of $s$) satisfying:
\begin{equation}
\label{boundAni}
\AA[\psi, \mm, \XN] = \ell^{-2}  O\left(\EnerPts(z,\ell)\right),
\end{equation}
and such that provided $|s|$ is smaller than $\ell^2$:
\begin{equation}
\label{Ani_ordre_2}
\F\left( \Phi_s(\XN), \tmms \right) = \F(\XN, \mm) + s \Ani[\psi, \XN, \mm] + s^2 \ErrorF,
\end{equation}
with a ``second order'' error term $\ErrorF$ bounded by:
\begin{equation}
\label{ErrorF}
\ErrorF = O \left( \ell^{-4} (1 + \log(\ell N^{1/2}))  \EnerPts(z,\ell)\right).
\end{equation}
with implicit constants in \eqref{boundAni} and \eqref{ErrorF} depending only {on the constant $\Cc$ in \eqref{assum-Main-non-bound-m}, \eqref{assum-Main-non-bound-f}}.
\end{lemma}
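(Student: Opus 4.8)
To prove Lemma~\ref{lem:AAAni}, the plan is to obtain the identity \eqref{Ani_ordre_2} configuration-wise by a change of variables in the energy, followed by a Taylor expansion of the logarithmic kernel, and then to quote the quantitative control of the first- and second-order pieces from the transport/anisotropy analysis of \cite{LebSerCLT,serfaty2020gaussian}. Set $\mu := \bXN - N\mm$, the neutral signed measure of total mass zero. Since $\tmms = \Phi_s\#\mm$ and $\Phi_s = \id + s\psi$ is injective once $|\Phi_s - \id|_\1 \le \hal$ (which by \eqref{assum_psi} holds for $|s|$ smaller than a constant times $\ell^2$), the diagonal $\triangle$ is preserved by $\Phi_s\times\Phi_s$, and a pushforward in the definition \eqref{def:globalenergy} gives
\begin{equation*}
\F(\Phi_s(\XN),\tmms) = \hal \iint_{\diagc} -\log|\Phi_s(x)-\Phi_s(y)| \, \d\mu(x)\d\mu(y).
\end{equation*}
Writing $\Phi_s(x) - \Phi_s(y) = (x-y) + s(\psi(x)-\psi(y))$ and subtracting $\F(\XN,\mm)$, one finds
\begin{equation*}
\F(\Phi_s(\XN),\tmms) - \F(\XN,\mm) = -\tfrac{1}{4} \iint_{\diagc} \log\!\Big( 1 + 2s\, w(x,y) + s^2\, \tfrac{|\psi(x)-\psi(y)|^2}{|x-y|^2} \Big) \, \d\mu(x)\d\mu(y),
\end{equation*}
with $w(x,y) := \tfrac{(x-y)\cdot(\psi(x)-\psi(y))}{|x-y|^2}$. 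The mean value inequality $|\psi(x)-\psi(y)| \le |\psi|_\1 |x-y|$ yields $|w| \preceq |\psi|_\1 \preceq \ell^{-2}$ and $|\psi(x)-\psi(y)|^2|x-y|^{-2} \preceq \ell^{-4}$, so for $|s|$ a small enough multiple of $\ell^2$ the argument of the logarithm stays in $[\tfrac{1}{2}, \tfrac{3}{2}]$ and the integrand is real-analytic in $s$ on that range.

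The coefficient of $s$ is, by definition, the anisotropy term
\begin{equation*}
\AA[\psi,\mm,\XN] = \Ani[\psi,\XN,\mm] := -\hal \iint_{\diagc} w(x,y) \, \d\mu(x)\d\mu(y),
\end{equation*}
which visibly does not depend on $s$. To prove the bound \eqref{boundAni} I would use the stress--energy rewriting of $\Ani$ from \cite[Section~4]{serfaty2020gaussian}: after integrating by parts and using $\Delta(-\log|\cdot|) = 2\pi\delta_0$, $\Ani$ becomes the integral of a quadratic expression in the electric field of $\mu$ contracted against $\D\psi$, with the diagonal self-interaction terms absorbed by the same truncation device that underlies the definition of $\FNl$. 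Since $\psi$ is supported in $\square(z,\ell)$ with $|\psi|_\1 \preceq \ell^{-2}$, and the truncated local electric energy in $\square(z,\ell)$ is precisely what is controlled by $\EnerPts(z,\ell)$ --- which is exactly why the correction $\tfrac{1}{4} n\log N + n$ is added in \eqref{def:EnerPts} --- one gets $|\AA[\psi,\mm,\XN]| \preceq \ell^{-2}\, \EnerPts(z,\ell)$.

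It then remains to estimate $s^2 \ErrorF := \F(\Phi_s(\XN),\tmms) - \F(\XN,\mm) - s\,\Ani$, the contribution of the terms of order $s^2$ and higher in $\log(1+\cdots)$. Expanding $\psi(x)-\psi(y)$ across the diagonal and using $|\log(1+u)-u| \le u^2$ for $|u|\le \tfrac{1}{2}$, this is dominated by a quadratic form $\iint K\,\d\mu\,\d\mu$ in $\mu$ whose kernel $K$ is symmetric, smooth, concentrated at scale $\ell$ around $z$, and satisfies $|K|_\kk \preceq \ell^{-4-\kk}$ for $\kk = 0,1,2$, uniformly in $|s| \preceq \ell^2$. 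The main obstacle --- and the only real work --- is that one cannot bound $\iint K\,\d\mu\,\d\mu$ by $|K|_\0 \cdot \big(|\mu|(\square(z,\ell))\big)^2 \asymp (N\ell^2)^2$, which would be off by a factor $N\ell^2$; instead, following \cite[Section~4]{serfaty2020gaussian}, one decomposes $K$ over dyadic scales running from the microscopic scale $\trhob \asymp N^{-1/2}$ up to $\ell$ and controls each scale by the local energy, there being $\asymp \log(\ell N^{1/2})$ of them. This is the mechanism producing the logarithmic loss, and it yields $\ErrorF = O\!\big( \ell^{-4} (1 + \log(\ell N^{1/2})) \big) \EnerPts(z,\ell)$, i.e.\ \eqref{ErrorF}. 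All constants depend only on the constant $\Cc$ of \eqref{assum-Main-non-bound-m}--\eqref{assum-Main-non-bound-f} (through \eqref{assum_psi}), so the whole argument is a matter of unwinding the transport lemmas of \cite[Section~4]{serfaty2020gaussian} --- in particular the consequences of \cite[Lemma~4.8]{serfaty2020gaussian} and \cite[(4.31)]{serfaty2020gaussian} already invoked for \eqref{assum_psi} --- with the scaling $|\psi|_\kk \preceq \ell^{-1-\kk}$ in hand.
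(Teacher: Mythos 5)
Your overall route coincides with the paper's: expand $\F$ along the transport $\Phi_s$ and delegate the quantitative control of the first- and second-order terms to the transport/anisotropy machinery of \cite[Section 4]{serfaty2020gaussian} together with the local laws. Your change of variables and Taylor expansion of the logarithmic kernel is a correct re-derivation of the identity underlying \cite[Prop.~4.2]{serfaty2020gaussian}, and taking $\AA$ to be the coefficient of $s$ indeed makes it independent of $s$. However, your structural description of the second-order remainder is not accurate: the kernel you call $K$ is \emph{not} smooth with $|K|_\kk \preceq \ell^{-4-\kk}$ up to the diagonal. Writing $w(x,y)=\int_0^1 \hat e\cdot \D\psi\bigl(y+t(x-y)\bigr)\hat e\,\d t$ with $\hat e=(x-y)/|x-y|$ shows that $w$ is bounded by $|\psi|_\1$ but direction-dependent at the diagonal, with derivatives blowing up like $|\psi|_\1 |x-y|^{-1}$, and the same holds for the higher-order terms. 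Consequently the logarithmic factor in \eqref{ErrorF} does not arise from dyadically decomposing a smooth scale-$\ell$ kernel; it comes from the renormalization of the excluded self-interactions near the diagonal, which is exactly why the bound must be routed through the electric-field/stress-energy estimates \cite[(4.7)--(4.8)]{serfaty2020gaussian}. The only real content of the paper's proof is the verification that those estimates, after inserting $|\psi|_\kk\preceq \ell^{-1-\kk}$ from \eqref{assum_psi}, $|\rm_s|_\kk\preceq \Cc\ell^{-\kk}$ from \eqref{assum-Main-non-bound-m}--\eqref{assum-Main-non-bound-f}, and $N\ell^2\geq 1$, collapse to $\ell^{-4}\bigl(1+\log(\ell N^{1/2})\bigr)\EnerPts(z,\ell)$; in your proposal this bookkeeping is asserted rather than carried out, and the sketched mechanism that replaces it would not produce it as stated.

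A second, more minor point: you identify $\AA$, $\Ani$ and the bare first-order coefficient $-\hal\iint_{\diagc} w\,\d\mu\,\d\mu$. The paper keeps these distinct (admittedly the statement is confusing, since \eqref{Ani_ordre_2} uses $\Ani$ before it is defined): $\AA$ is the anisotropy of \cite{serfaty2020gaussian}, and $\Ani=\AA-\frac14\sum_i \div\,\psi(x_i)$ by \eqref{eq:defAni}, a correction that is used later when the expansion is matched against the entropy difference in Lemma \ref{lem:transport}. For the bound \eqref{boundAni} itself this is harmless, since $\sum_i|\div\,\psi(x_i)|\preceq |\psi|_\1\, n$ and $\EnerPts(z,\ell)$ as defined in \eqref{def:EnerPts} controls the number of points, but if you adopt your definition you must redo the corresponding bookkeeping in the proof of Lemma \ref{lem:transport}.
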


\begin{proof}[Proof of Lemma \ref{lem:AAAni}]
We apply \cite[Prop. 4.2]{serfaty2020gaussian} to the vector field $\psi$ chosen above. The main task is to check that {\cite[(4.7)]{serfaty2020gaussian}}, which bounds the second derivative of the energy along a transport, can itself be controlled by our $\ErrorF$.
\begin{itemize}
	\item We use \eqref{assum_psi} to bound $\psi$ and its derivatives.
	\item Using our assumptions on $\rm$ and $f$ we can control the derivatives of $\rm_s$ by:
	\begin{equation*}
	|\rm_s|_\kk \leq |\rm|_\kk + s |f|_\kk \leq \Cc \ell^{-\kk} + s \Cc \ell^{-\kk +2},
	\end{equation*}
	but we are taking $|s| \ll \ell^2$ (see \eqref{eq:condi_s}  and \eqref{condiell}) so we can replace $|\rm_s|_\kk$ by $\Cc \ell^{-\kk}$ for $\kk = 1, 2$.
In particular, $N^{-\kk/2} |\rm_s|_\kk \preceq 1$   for $\kk = 1, 2$.
\end{itemize}
Combining these observations and studying \cite[(4.8)]{serfaty2020gaussian}, we are left with:
\begin{equation*}
\left(\ell^{-4} \Cc + \ell^{-5} N^{-1/2} \log( \ell N^{1/2} ) \Cc\right) \times \log ( \ell N^{1/2} ) \EnerPts(z, \ell) +
\Cc \ell^{-5} N^{-1/2} \EnerPts(z, \ell).
\end{equation*}
Simplifying a bit and using that $\frac{1}{\ell^2 N} \leq 1$, we can indeed take $\ErrorF$ as in \eqref{ErrorF}.
\end{proof}

For convenience, we denote by $\Ani$ the quantity:
\begin{equation}
\label{eq:defAni}
\Ani[\psi, \mm, \XN] := \AA[\psi, \mm, \XN] - \frac{1}{4} \sum_{i=1}^N \div\ \psi(x_i).
\end{equation}
We can now state the conclusion of this paragraph:
\begin{lemma}[Comparison of partition functions along a transport]
\label{lem:transport}
Recall the notation $\Pbeta_{N, \mm}$ from \eqref{def:PNbetamu}.
We have:
\begin{multline}
\label{compa_transport}
\log \frac{\KNbeta(\tmms)}{\KNbeta(\mm)} = \left(\frac{\beta}{4} - 1 \right) N \left( \EE(\trms) - \EE(\rm) \right) \\ + \log \Esp_{\Pbeta_{N, \mm}} \left[\exp\left(s \Ani[\psi, \XN, \mm] + s^2 \ErrorF \right) \right].	
\end{multline}
\end{lemma}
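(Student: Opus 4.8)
The plan is to prove \eqref{compa_transport} by a change of variables through the approximate transport map $\Phi_s = \id + s\psi$ constructed above. By \eqref{assum_psi} and the hypothesis \eqref{eq:condi_s} on $s$ (which in particular forces $|\Phi_s-\id|_\1\le\hal$), the map $\Phi_s$ is a global $C^\infty$-diffeomorphism of $\R^2$ with positive Jacobian, equal to the identity outside $\square(z,\ell)$. It is cleanest to run the computation on the variant of the partition function used in \cite{serfaty2020gaussian}, in which $\exp\bigl(-\beta(\F(\cdot,\mm)+2N\sum_i\zeta(x_i))\bigr)$ is integrated against $\mm^{\otimes N}$ rather than against Lebesgue: in that convention the push-forward identity $\tmms=\Phi_s\#\mm$ carries $\tmms^{\otimes N}$ exactly onto $\mm^{\otimes N}$, so the substitution $y_i=\Phi_s(x_i)$ produces \emph{no} Jacobian factor, at the cost of an additive discrepancy between the two conventions which, by the technical remark of Section \ref{sec:tech_rem}, equals $N\EE(\rm)$ up to an $\Obeta$ (and likewise with $\tmms$ in place of $\mm$).

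Performing the substitution $y_i=\Phi_s(x_i)$ in (the $\mm^{\otimes N}$-version of) $\KNbeta(\tmms)$ then produces three effects. First, the confinement is untouched: $\psi$ is supported in $\square(z,\ell)$, which by \eqref{condiell} lies well inside $\Dd$ where $\zeta\equiv 0$, and $|s|\,|\psi|_\0$ is much smaller than the distance from $\square(z,\ell)$ to $\partial\Dd$ guaranteed by \eqref{condiell}; hence $\Phi_s$ keeps every point of that square inside $\Dd$ and $\zeta(\Phi_s(x_i))=\zeta(x_i)$ for all $i$. Second, the energy transforms by Lemma \ref{lem:AAAni}: $\F(\Phi_s(\XN),\tmms)=\F(\XN,\mm)+s\,\Ani[\psi,\XN,\mm]+s^2\ErrorF$ with $\ErrorF$ of the size \eqref{ErrorF}. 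Third, the change of reference measure feeds in the relative-entropy term via the exact identity $\EE(\trms)-\EE(\rm)=-\int\log\det\D\Phi_s\,\d\mm$ (a consequence of $\tmms=\Phi_s\#\mm$), where $\log\det\D\Phi_s=s\,\div\psi+O(s^2|\D\psi|^2)$ and $N\int O(s^2|\D\psi|^2)\,\d\mm=O(s^2N\ell^{-2})$ is of the same order as $s^2\ErrorF$. Plugging Lemma \ref{lem:AAAni} into the comparison-of-partition-functions-along-a-transport machinery of \cite{serfaty2020gaussian}, splitting each sum $\sum_i g(x_i)$ that occurs into its mean $N\int g\,\d\mm$ and its fluctuation $\LNmu(g)$, and then restoring the relative-entropy correction of Section \ref{sec:tech_rem} to pass from the $\mm^{\otimes N}$-convention back to the Lebesgue one, one should reach
\[
\KNbeta(\tmms)=\KNbeta(\mm)\,\exp\Bigl(\bigl(\tfrac{\beta}{4}-1\bigr)N\bigl(\EE(\trms)-\EE(\rm)\bigr)\Bigr)\,\Esp_{\PNbetamu}\Bigl[\exp\bigl(s\,\Ani[\psi,\XN,\mm]+s^2\ErrorF\bigr)\Bigr],
\]
which is \eqref{compa_transport} after taking logarithms.

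The step I expect to be the main obstacle is the constant bookkeeping that pins the coefficient of $N(\EE(\trms)-\EE(\rm))$ at exactly $\tfrac\beta4-1$. This is the coefficient familiar from the Coulomb-gas free-energy expansion (cf. \cite[Thm. 1]{sandier20152d} and \cite{LebSerCLT}): the $\tfrac\beta4$ already appears in the comparison of partition functions along a transport in the $\mm^{\otimes N}$-convention of \cite{serfaty2020gaussian}, where it originates from the deterministic ($s$-independent) part of the energy change of Lemma \ref{lem:AAAni}, recognized through the identity $\EE(\trms)-\EE(\rm)=-\int\log\det\D\Phi_s\,\d\mm$ as a multiple of $N(\EE(\trms)-\EE(\rm))$ and pulled out of the expectation; the additional $-1$ is the configurational-entropy term produced when descending to the Lebesgue convention (Section \ref{sec:tech_rem}). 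A second delicate point is to verify that \emph{all} the second-order remainders — from the transport approximation (Lemma \ref{lem:ApproxError}), from $\ErrorF$ in Lemma \ref{lem:AAAni}, and from the expansion of $\log\det\D\Phi_s$ — are genuinely of the common size recorded in \eqref{ErrorF}, so that they can be collected into the single term $s^2\ErrorF$ inside the expectation without compromising the local-law estimates \eqref{loc_laws} to be applied later to $\Esp_{\PNbetamu}\bigl[\exp(s\,\Ani+s^2\ErrorF)\bigr]$. The remaining steps — expanding the Jacobian-type determinant and checking that the $s$-independent factors cancel in the ratio $\KNbeta(\tmms)/\KNbeta(\mm)$ — are routine.
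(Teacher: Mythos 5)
Your skeleton is the same as the paper's (which follows \cite[Prop.~4.3]{LebSerCLT}): change variables by $\Phi_s$, expand the energy with Lemma \ref{lem:AAAni}, pass from $\AA$ to $\Ani$ via \eqref{eq:defAni}, and recognize $\left(\frac{\beta}{4}-1\right)N\left(\EE(\trms)-\EE(\rm)\right)$ in the means of the two linear statistics $-\frac14\sum_i \div\,\psi(x_i)$ and $\sum_i\log\det\D\Phi_s(x_i)$, whose fluctuations carry a factor $s$ and are controlled by Lemma \ref{lem_fluct_uniform}, hence negligible against the errors already present. The genuine gap is your detour through the $\mm^{\otimes N}$-convention of \cite{serfaty2020gaussian}. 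Section \ref{sec:tech_rem} converts between the two conventions only up to the factor $\exp\left(N\EE(\rm)+\LNmu(\log\rm)\right)$, i.e.\ up to an additive $O_\beta(1)$ coming from the fluctuation term; in the ratio $\KNbeta(\tmms)/\KNbeta(\mm)$ you pick up two such corrections (one for $\tmms$ under $\Pbeta_{N,\tmms}$, one for $\mm$ under $\Pbeta_{N,\mm}$) which are each $O(1)$ but have no reason to cancel beyond $O(1)$, and in particular not proportionally to $s$; the same tilting by $\prod_i\rm(x_i)$ also changes the Gibbs measure under which your final expectation is taken. So, as written, your argument proves \eqref{compa_transport} only up to an $s$-independent additive $O(1)$.

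That loss is fatal for the way the lemma is used. In Step 5 of the proof of Proposition \ref{prop:compar1}, \eqref{compa_transport} is invoked for $s$ as small as order $N^{-1}$ and mesoscopic $\ell$, where the total error $s N^{3/4}\ell^{-1/2}\left(1+\log(N\ell^2)\right)^{1/2}$ is $o(1)$; this precision is exactly what Remark \ref{rk:clt} and the moment asymptotics \eqref{asymp_mom} of the lower bound require, and a stray $O(1)$ cannot be absorbed there. This is precisely the point of the closing remark of Section \ref{sec:ProofCompar1}: the convention of \cite{serfaty2020gaussian} is only affordable in Steps 3 and 4, which are applied at $s=\sstar$ where the effective errors are already much larger than $1$, whereas Lemma \ref{lem:transport} ``does not rely on \cite{serfaty2020gaussian} regarding the definition of partition functions.'' The fix is to stay in the Lebesgue convention throughout, as the paper does: keep the Jacobian $\exp\left(\sum_i\log\det\D\Phi_s(x_i)\right)$, extract its mean (the entropy difference, giving the $-1$), and control its fluctuation --- like that of $\sum_i \div\,\psi(x_i)$, which produces the $\beta/4$ --- by Lemma \ref{lem_fluct_uniform}, both being negligible relative to \eqref{ErrorF} and to the error of Lemma \ref{lem:AniSmall}. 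Otherwise you would need a quantitative-in-$s$ cancellation of the two convention corrections, an estimate that Section \ref{sec:tech_rem} does not provide.
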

\begin{proof}[Proof of Lemma \ref{lem:transport}] It follows the same steps as \cite[Prop. 4.3]{LebSerCLT}. In short:
\begin{enumerate}
	\item We change variables {by $\Phi_s$} in the integral defining $\KNbeta(\tmms)$ and use Lemma \ref{lem:AAAni} to expand the interaction energy, hence the terms $\AA$ and $\ErrorF$ appear in the exponential. We replace $\AA$ by $\Ani$ up to the correction mentioned in \eqref{eq:defAni}.
	\item The Jacobian of this change of variables is $\exp\left(\sum_{i=1}^N \log \det \D \Phi_s(x_i)\right)$. {The exponent} can be related to the difference of entropies between $\mms$ and $\mm$, up to a fluctuation term (the fluctuations of $x \mapsto \log \det \D \Phi_s(x)$), which itself is bounded using Lemma \ref{lem_fluct_uniform} and is negligible compared to the estimate \eqref{ErrorF} for
	  the ``second order'' error $\ErrorF$.
	\item As in (2), the difference between $\Ani$ and $\AA$  in \eqref{eq:defAni} also happens to be equal to the difference of relative entropies (up to some multiplicative factor) plus an error term that is negligible compared to $\ErrorF$.
\end{enumerate}
\end{proof}

\paragraph{\emph{3. Relative expansion of partition functions.}}
The following is contained in \cite[Prop. 6.4]{serfaty2020gaussian} \corT{(see Section \ref{sec:tech_rem} below for a technical comment)}.
\begin{lemma}[Relative expansion of partition functions]
\label{lem:relative_expansion}
Let $(z, \ell)$ such that \eqref{condiell} holds, and let $\mm, \tmm$ be two probability measures on $\Dd$ such that $\rm = \trm$ outside $\Dd(z, \ell)$. Assume that $\rm, \trm$ are of class $C^1$ and satisfy $\hal \leq \rm, \trm \leq 2$ on $\Dd$. Assume also that $|\rm|_\1 + |\trm|_\1 \leq \Cc \ell^{-1}$ for some constant $\Cc$. Then:
\begin{equation}
\label{compa_relat}
\log \frac{\KNbeta(\tmm)}{\KNbeta(\mm)} = \left(\frac{\beta}{4} - 1 \right) N \left( \EE(\trm) - \EE(\rm) \right) + O \left( N \ell^2 \right)^\hal \left(1 + \log \left(N \ell^2\right) \right)^\hal,
\end{equation}
with an implicit constant depending only on $\Cc$ and $\beta$.
\end{lemma}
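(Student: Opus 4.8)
The statement coincides, up to the choice of normalization for the partition function, with \cite[Prop.~6.4]{serfaty2020gaussian}, so the plan is to quote that result and perform the conversion described in Section~\ref{sec:tech_rem}. First I would record the relation between our partition function $\KNbeta(\mm)$, defined in \eqref{def:KNbetamu} by integrating against the Lebesgue measure $\d\XN$, and the quantity $\KNbeta^{\mathrm{Serf}}(\mm)$ of \cite[Sec.~3.3]{serfaty2020gaussian}, which is the same integral taken against $\mm^{\otimes N}$. Since the two integrands differ by $\prod_{i=1}^N \rm(x_i) = \exp\big( N \EE(\rm) + \LN^{\mm}(\log \rm) \big)$, one has
\[
\log \KNbeta^{\mathrm{Serf}}(\mm) = \log \KNbeta(\mm) + N \EE(\rm) + \log \Esp_{\PNbetamu}\big[ e^{\LN^{\mm}(\log\rm)} \big],
\]
together with the same identity with $\tmm$ in place of $\mm$; subtracting the two yields
\[
\log \frac{\KNbeta(\tmm)}{\KNbeta(\mm)} = \log \frac{\KNbeta^{\mathrm{Serf}}(\tmm)}{\KNbeta^{\mathrm{Serf}}(\mm)} - N\big(\EE(\trm) - \EE(\rm)\big) + \log \frac{\Esp_{\PNbetamu}\big[ e^{\LN^{\mm}(\log\rm)} \big]}{\Esp_{\Pbeta_{N,\tmm}}\big[ e^{\LN^{\tmm}(\log\trm)} \big]}.
\]

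Next I would observe that the hypotheses of the lemma --- the constraint \eqref{condiell} on $(z,\ell)$, the coincidence $\rm = \trm$ outside $\Dd(z,\ell)$, the $C^1$ regularity, the two-sided bound $\hal \leq \rm, \trm \leq 2$, and $|\rm|_\1 + |\trm|_\1 \leq \Cc \ell^{-1}$ --- are precisely those under which \cite[Prop.~6.4]{serfaty2020gaussian} applies, and that it provides an expansion of the form
\[
\log \frac{\KNbeta^{\mathrm{Serf}}(\tmm)}{\KNbeta^{\mathrm{Serf}}(\mm)} = \frac{\beta}{4}\, N\big(\EE(\trm) - \EE(\rm)\big) + O\!\left( (N\ell^2)^{\hal}\big(1 + \log(N\ell^2)\big)^{\hal} \right),
\]
with an implicit constant depending only on $\Cc$ and $\beta$. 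Substituting this into the previous display produces the announced leading term $\big(\tfrac{\beta}{4} - 1\big) N\big(\EE(\trm) - \EE(\rm)\big)$, the extra $-1$ being exactly the entropic price of passing from the $\mm^{\otimes N}$-normalization to the Lebesgue one.

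It then remains to check that the residual fluctuation factor is absorbed by the error term. Because $\hal \leq \rm, \trm \leq 2$ and $|\rm|_\1, |\trm|_\1 \leq \Cc \ell^{-1}$, the functions $\log\rm$ and $\log\trm$ are bounded, are $O(\ell^{-1})$-Lipschitz, and differ only on $\Dd(z,\ell)$; the a priori exponential-moment bounds for fluctuations of such test functions (Lemma~\ref{lem_fluct_uniform}, or the sharper $H^{-1}$-type estimates recalled in \cite[Sec.~2.3]{serfaty2020gaussian}) then show that $\log \Esp_{\PNbetamu}\big[ e^{\LN^{\mm}(\log\rm)} \big]$ and its $\tmm$-analogue are $O\big( (N\ell^2)^{\hal} \big)$ --- and in fact $O(1)$ whenever the density is additionally $C^2$, which covers every application of this lemma in the paper. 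In either case this is dominated by the error term of \eqref{compa_relat} since $N\ell^2 \geq 1$, which closes the argument. I expect the only slightly delicate point to be precisely this bookkeeping: confirming that the normalization correction is controlled and that the regularity and localization assumptions line up with those of \cite[Prop.~6.4]{serfaty2020gaussian}; the substantive estimates behind both the $\frac{\beta}{4}$-expansion and the error bound are entirely contained in \cite{serfaty2020gaussian}.
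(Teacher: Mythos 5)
Your proposal is correct and follows essentially the same route as the paper: the paper's proof of this lemma consists precisely of invoking \cite[Prop.~6.4]{serfaty2020gaussian} together with the normalization conversion of Section~\ref{sec:tech_rem}, i.e.\ the entropy-plus-fluctuation bookkeeping you spell out, with the fluctuation factor absorbed into the error term. The only step you state a bit loosely is the appeal to Lemma~\ref{lem_fluct_uniform} for $\log\rm$ itself (which need not be supported in $\Dd(z,\ell)$, only the difference $\log\rm-\log\trm$ is), but the paper treats this point at the same level of informality by citing \cite[Sec.~2.3]{serfaty2020gaussian}.
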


\paragraph{\emph{4. Serfaty's trick.}}
It is hard to prove a bound on $\Ani$ that is better than \eqref{boundAni} and holds \emph{configuration-wise}. However one can improve the control on $\Ani$ \emph{in exponential moments}, using the following trick.
Recall the assumption  \eqref{condiell}.

\begin{lemma}[Smallness of the anisotropy]
\label{lem:AniSmall}
There exists $\Cc'$ depending only on {on the constant $\Cc$ in \eqref{assum-Main-non-bound-m}, \eqref{assum-Main-non-bound-f}} such that if $|s| \leq \frac{1}{\Cc'} \ell^{3/2} N^{-1/4}$, we have:
\begin{equation}
\label{ani_small}
\log \Esp_{\Pbeta_{N, \mm}} \left[\exp\left(s \Ani[\psi, \XN, \mm]\right) \right] = O\left(s N^{3/4} \ell^{-1/2} \left(1 + \log\left(N \ell^2 \right) \right)^\hal \right) ,
\end{equation}
with an implicit constant depending only on $\Cc'$ and $\beta$.
\end{lemma}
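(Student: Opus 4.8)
The plan is to run \emph{Serfaty's trick}, following \cite[Prop.~4.3]{LebSerCLT} and \cite[Sec.~4]{serfaty2020gaussian}. The crude deterministic bound for $\Ani$ coming from \eqref{boundAni}, \eqref{assum_psi} and \eqref{eq:defAni}, namely $|\Ani[\psi,\XN,\mm]|\preceq\ell^{-2}\bigl(\EnerPts(z,\ell)+\bXN(\Dd(z,\ell))\bigr)$, is of order $N\ell^2$ and hence useless at the microscopic scale; the gain is obtained by comparing the transport identity with the relative expansion of partition functions.

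First I would record, for a parameter $u$ with $|u|\le 2s_0$ (where $s_0:=\tfrac{1}{\Cc'}\ell^{3/2}N^{-1/4}$, enlarging $\Cc'$ so that this stays within the admissible range of the transport estimates), the identity underlying Lemma~\ref{lem:transport}: with $\Phi_u=\id+u\psi$, $\widetilde\mm_u=\Phi_u\#\mm$ and $q(u):=\F(\Phi_u(\XN),\widetilde\mm_u)-\F(\XN,\mm)=u\Ani+u^2\ErrorF$ (where $\ErrorF=\ErrorF(u)$ obeys \eqref{ErrorF}), the change of variables gives
\begin{equation*}
\log\Esp_{\PNbetamu}\!\left[e^{q(u)}\right]=\log\frac{\KNbeta(\widetilde\mm_u)}{\KNbeta(\mm)}-\Bigl(\tfrac{\beta}{4}-1\Bigr)N\bigl(\EE(\widetilde\rm_u)-\EE(\rm)\bigr).
\end{equation*}
Since $\psi$ is supported in $\Dd(z,\ell)$, the density $\widetilde\rm_u$ coincides with $\rm$ outside $\Dd(z,\ell)$, and the quantitative pushforward estimates of \cite[Lemma~5.1]{serfaty2020gaussian} (as invoked in Lemma~\ref{lem:ApproxError}) give $\hal\le\widetilde\rm_u\le 2$ and $|\widetilde\rm_u|_1\preceq\ell^{-1}$ uniformly in $|u|\le 2s_0$. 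Thus Lemma~\ref{lem:relative_expansion} applies to the pair $(\mm,\widetilde\mm_u)$ and bounds the right-hand side by $O\bigl((N\ell^2)^{1/2}(1+\log(N\ell^2))^{1/2}\bigr)$, uniformly in $|u|\le 2s_0$; equivalently,
\begin{equation*}
\log\Esp_{\PNbetamu}\!\left[e^{u\Ani+u^2\ErrorF}\right]=O\bigl((N\ell^2)^{1/2}(1+\log(N\ell^2))^{1/2}\bigr),\qquad |u|\le 2s_0.
\end{equation*}

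To isolate $\Ani$, fix $0<s\le s_0$. Since $q(2s_0)=2s_0\Ani+(2s_0)^2\ErrorF$, one has the \emph{exact} pointwise identity $e^{s\Ani}=\bigl(e^{q(2s_0)}\bigr)^{s/(2s_0)}\,e^{-2s s_0\ErrorF}$. Applying Hölder's inequality with conjugate exponents $2s_0/s$ and $2s_0/(2s_0-s)$, the first factor contributes $\tfrac{s}{2s_0}\log\Esp_{\PNbetamu}[e^{q(2s_0)}]=\tfrac{s}{2s_0}O\bigl((N\ell^2)^{1/2}(1+\log(N\ell^2))^{1/2}\bigr)$ by the previous display. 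For the second factor, \eqref{ErrorF} and $s\le s_0$ give that the exponent $-\tfrac{4s s_0^2}{2s_0-s}\ErrorF$ is bounded in absolute value by $C\tfrac{s}{s_0}\Cc'^{-2}(N\ell^2)^{-1/2}(1+\log(N\ell^2))\,\EnerPts(z,\ell)$; since $t\mapsto t^{-1/2}(1+\log t)$ is bounded on $[1,\infty)$, for $\Cc'$ large the coefficient of $\EnerPts$ lies below the threshold of the local laws, so \eqref{loc_laws} bounds the second factor by $\exp\bigl(O(\tfrac{s}{s_0}\Cc'^{-2}(N\ell^2)^{1/2}(1+\log(N\ell^2)))\bigr)$. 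Using $s_0^{-1}(N\ell^2)^{1/2}=\Cc'N^{3/4}\ell^{-1/2}$ and multiplying the two bounds yields \eqref{ani_small} for $0<s\le s_0$; the case $s<0$ follows by replacing $\psi$ with $-\psi$, which changes $\Ani$ into $-\Ani$.

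The crux is the control of the second factor in the last step: the deterministic bound \eqref{ErrorF} for $\ErrorF$ is, at the microscopic scale, of the same size as $\EnerPts$ itself, so there is essentially no slack and one genuinely needs both the constraint $|s|\le s_0$ — which supplies the gain $s s_0\ell^{-4}\preceq(N\ell^2)^{-1/2}$ on the coefficient of $\EnerPts$ — and the sharp exponential moment control \eqref{loc_laws} of \cite{armstrong2019local}; keeping track of the exact power of the logarithm in the final error requires care. The other technical point is the uniform admissibility of $\widetilde\rm_u$ for Lemma~\ref{lem:relative_expansion} over the whole range $|u|\le 2s_0$, which is where the quantitative pushforward bounds of \cite{serfaty2020gaussian} are used.
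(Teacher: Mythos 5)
Your proposal is correct and follows essentially the same route as the paper's proof: compare the transport identity \eqref{compa_transport} with the relative expansion \eqref{compa_relat} at the maximal admissible parameter, strip off the second-order term using the deterministic bound \eqref{ErrorF} together with the local laws \eqref{loc_laws}, and interpolate down to general $s$ by H\"older. The only differences are cosmetic (a single H\"older application at general $s$ instead of Cauchy--Schwarz at $s_\star$ followed by Jensen, and the explicit $\psi\to-\psi$ symmetry for negative $s$), and the logarithm-power bookkeeping you flag is no worse than in the paper's own argument.
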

\begin{proof}[Proof of Lemma \ref{lem:AniSmall}]
We first take $s = \sstar := \frac{1}{\Cc'} \ell^{3/2} N^{-1/4}$ for some large enough constant $\Cc'$ (if $\ell$ is of order $N^{-1/2}$ then $\ell^2$ and $\ell^{3/2} N^{-1/4}$ are comparable, thus we need to divide by $\Cc'$ large enough in order to match our previous assumptions on $s$. For mesoscopic length scales, this is irrelevant). By comparing the two expressions \eqref{compa_transport} and \eqref{compa_relat} and discarding negligible terms one gets:
\begin{equation}
\label{sstar}
\log \Esp_{\Pbeta_{N, \mm}} \left[\exp\left(\sstar \Ani[\psi, \XN, \mm] + \sstar^2 \ErrorF \right) \right] = O\left( \left(N \ell^2 \right)^\hal \left(1 + \log \left(N \ell^2\right) \right)^\hal \right).
\end{equation}
Using the expression \eqref{ErrorF} for $\ErrorF$ and the local law \eqref{loc_laws} we know that:
\begin{equation}
\label{usingErrorF}
\log \Esp_{\Pbeta_{N, \mm}} \left[\exp\left( \sstar^2 \ErrorF \right) \right] \leq O\left(  \ell^{-1} N^{-1/2} \log(\ell N^{1/2}) N \ell^2  \right)   =  O\left( \ell N^{1/2} \log(\ell N^{1/2}) \right)  .
\end{equation}
Combining \eqref{sstar} and \eqref{usingErrorF} and using Cauchy-Schwarz's inequality we deduce that:
\begin{equation*}
\log \Esp_{\Pbeta_{N, \mm}} \left[\exp\left(\hal \sstar \Ani[\psi, \XN, \mm] \right) \right] \leq 
O\left( \ell N^{1/2} \log(\ell N^{1/2}) \right). 
\end{equation*}
Thus for values of $s$ \corG{smaller than $ \hal \sstar$}, we apply Hölder's inequality and obtain \eqref{ani_small}.
\end{proof}

\paragraph{\emph{5. Conclusion of the proof of Lemma \ref{prop:compar1}}}
We compare $\KNbeta(\mms)$ and $\KNbeta(\tmms)$ using Lemma \ref{lem:ApproxError} and then apply Lemma \ref{lem:transport} to compare $\KNbeta(\tmms)$ and $\KNbeta(\mm)$, using Lemma \ref{lem:AniSmall} to control the anisotropy term. Lemma \ref{lem:ApproxError} also allows us to replace $\EE(\trms)$ by $\EE(\rms)$ up to some error. Finally one can check that for $|s| \leq \sstar$ the dominant error term is the one coming from \eqref{ani_small}, which yields \eqref{eq:compar1}. \qed


\subsubsection{Proof of Lemma \ref{lem:compar_nice_bound}}
Compared to the previous proof, we dispense with Step 1 as one can easily find an exact transport, as well as Steps 3 and 4 because we are not aiming for precise estimates on the anisotropy. \\
Since the reference measure is $\mm_0$ and since $f$ has radial symmetry, it is easy to construct a bijective, ``radial rearrangement'' map $\Phi_s : \Dd \to \Dd$ that pushes $\mm_0$ onto $\mm_0 + s f$ and can be written as $\Phi_s = \id + \alpha_s(x) x$ on $\Dd$, for some radial function $\alpha_s$ whose derivative is bounded on $\Dd$ by $s \Cc''$ for some constant $\Cc''$ depending only on $\Cc$ and $r$. We can always extend $\alpha_s$ into a $C^1$, compactly supported function on $\Dd_2$ with $|\alpha_s| \leq \hal, |\alpha_s|_1 \leq s \Cc''$.\\
Applying \cite[Prop 4.2]{serfaty2020gaussian} (note that now, compared to the proof of Proposition \ref{prop:compar1}, we are only able to use - and in fact only need - results where the vector field $\psi$ is simply assumed to be $C^1$, see in particular \cite[(4.6)]{serfaty2020gaussian}) to $\Phi_s$ and changing variables as in the previous proof, we obtain (see also \cite[(4.3)]{serfaty2020gaussian})
\begin{equation*}
\log \frac{\KNbeta(\mm'_s)}{\KNbeta(\mm_0)} = \log \Esp \left[ \exp\left( s
\Cc'' \EnerPts(\Dd_2) \right) \right],
\end{equation*}
but the number of points in $\Dd_2$ is always $\leq N$ and the (fluctuations of the) global energy 
have exponential moments of order $N$ (this “global law” follows from \eqref{global_law}). We thus get \eqref{conclu-boundary}.
\qed

\subsection{A technical remark about partition functions}
\label{sec:tech_rem}
\newcommand{\KNbetaSer}{\mathrm{K}_N^{\beta, \mathrm{Ser}}}
\corT{We have used above in \eqref{compa_relat} the comparison between partition functions associated to measures that coincide outside a small disk stated in \cite[Prop. 6.4]{serfaty2020gaussian}. It is worth noting that our definition \eqref{def:KNbetamu} of the partition function $\KNbeta(\mm)$ associated to a probability measure $\mm$ on $\Dd$ differs slightly from the one in \cite{serfaty2020gaussian} (which we denote here by $\KNbetaSer$):
\begin{multline*}
\KNbeta(\mm) := \int_{(\R^2)^N} \exp\left(- \beta \left( \F(\XN, \mm) + 2N \sum_{i=1}^N \zeta(x_i) \right) \right) \d \XN, \\
 \KNbetaSer(\mm) := \int_{(\R^2)^N} \exp\left(- \beta \left( \F(\XN, \mm) \right) \right) \prod_{i=1}^N \rm(x_i) \d  \XN.
\end{multline*}
This is due to the fact that we work with standard equilibrium measures $\mm$ whereas \cite{serfaty2020gaussian} prefers to work with the so-called “thermal equilibrium measure” - which allows to treat various temperature regimes in a unified fashion, a feature which is not relevant for us here. Let us briefly investigate the consequence of this difference in the definition.\\
Assume (as will always be the case for us) that $\mm$ is supported on the unit disk and that its density is $C^1$ on $\Dd_1$ and is equal to $1$ inside $\Dd_1$ near $\partial \Dd_1$. First, assume moreover that we have a “perfect confinement” in the sense of Remark \ref{rem:perfectconfinement} i.e. $\zeta = + \infty$ outside $\Dd_1$. Then both integrals defining  $\KNbeta(\mm)$ and $\KNbetaSer(\mm)$ are effectively integrals on $\Dd_1^N$ and the integrands differ by:
\begin{equation*}
\prod_{i=1}^N \rm(x_i) = \exp\left( \sum_{i=1}^N \log \rm(x_i) \right) = \exp\left(N \int \log \rm(x) \d \mm(x) + \LNmu(\log \rm) \right).
\end{equation*}
In the first term in the exponent, we recognize $N \EE(\rm)$, where $\EE$ is the relative entropy as in Section~\ref{sec:not}. On the other hand, the fluctuation term $\LNmu(\log \rm)$ contributes $O(\sqrt{N})$ because $x \mapsto \log \rm(x)$ is $C^1$ and compactly supported within $\Dd_1$, and we can control fluctuations of Lipschitz test functions thanks to Lemma \ref{lem_fluct_uniform}. We would thus obtain, in that case:
\begin{equation*}
\log \KNbeta(\mm) = \log \KNbetaSer(\mm) - N \EE(\rm) + O(N^\hal),
\end{equation*}
which means that the main difference is one entropy term to be added/substracted from the formulas. If, however, the confinement is not “perfect” then it is not completely clear what the respective influence of $2N \sum_{i=1}^N \zeta(x_i)$ in $\KNbeta$ and $\prod_{i=1}^N \rm(x_i)$ in $\KNbetaSer$ will be.\\
In the proof of \cite[Prop. 6.4]{serfaty2020gaussian}, to compare the partition functions associated to $\mm$ and $\tmm$, the first step is to decompose the partition function into an “outside” part (corresponding to the system \emph{outside} $\Dd(z, \ell)$ where the measures coincide) and an “inside” part (corresponding to $\Dd(z, \ell)$) on which all the analysis takes place. For this “inside” part, the effective confining potential $\zeta$ plays no role, and the only difference in definition is thus again $\prod_{i=1}^n \rm(x_i) = \exp\left( \sum_{i=1}^n \log \rm(x_i) \right).$
We recognize again a relative entropy as the leading order term and we note that, since $\mm$ and $\tmm$ coincide outside $\Dd(z, \ell)$ we have:
\begin{equation*}
\int_{\Dd(z, \ell)} \log \rm \ \d \mm - \int_{\Dd(z, \ell)} \log \trm \ \d \tmm = \EE(\rm) - \EE(\trm).
\end{equation*}
Moreover, since we assume that the density $\rm$ is bounded below by $\hal$ and that $|\rm|_{\1}$ is of order $\ell^{-1}$ (see the statement of Lemma \ref{lem:relative_expansion}) we can ensure that the fluctuation term within $\Dd(z, \ell)$ is of order $\left(N\ell^2\right)^\hal$ in exponential moments. In conclusion: 
\begin{equation*}
\log \frac{\KNbeta(\tmm)}{\KNbeta(\mm)} = \log \frac{\KNbetaSer(\tmm)}{\KNbetaSer(\mm)} + N \left( \EE(\rm) - \EE(\trm) \right) + O\left(\left(N\ell^2\right)^\hal\right),
\end{equation*}
which allows us to use \cite[Prop. 6.4]{serfaty2020gaussian} with our convention for the partition function, up to adding/substracting one entropy term in the formulas - which explains why we write $\left(\frac{\beta}{4} - 1 \right) N \left( \EE(\trm) - \EE(\rm) \right)$ in \eqref{compa_relat} instead of $\frac{\beta}{4}  N \left( \EE(\trm) - \EE(\rm) \right)$ in \cite[Prop. 6.4]{serfaty2020gaussian}.
}

\subsection{Proof of Proposition \ref{prop:compar2}}
\label{sec:ProofCompar2}
\begin{proof}[Proof of Proposition \ref{prop:compar2}]
We introduce a sequence of intermediate length scales $\ellA = : \ell_0 < \dots < \ell_n : = \ell :=   \min\left(\ellB, \frac{1}{4}\left(1 - \dist(\zA, \partial \Dd)\right) \right)$ by defining $c, n$ as:
\begin{equation*}
n = \floor{\log \left( \ell / \ellA \right)}, \quad \log c := \frac{1}{n} \log \left( \ell/\ellA \right),
\end{equation*}
and by setting $\ell_k := c^k \ellA$ for $k = 0, \dots, n$. Note that $c \in [1,2]$  and $(\zA, \ell_i)$ satisfy \eqref{condiell} for all $i\le n$ even if the larger length scale $\ellB$ is macroscopic. Moreover, $\ellB$ and $\ell$ are always comparable.

For each $i = 0, \dots, n$ we fix a smooth cut-off function $\chi^{(i)}$ such that:
\begin{equation*}
\chi^{(i)} \equiv 1 \text{ on } \Dd(\zA, \ell_i), \quad \chi^{(i)} \equiv 0 \text{ outside } \Dd(\zA, 2 \ell_i), \quad |\chi^{(i)}|_{\kk} \leq 100 \ell_i^{-\kk} \text{ for } \kk = 1, 2, 3,
\end{equation*}
and we define $\mmsi$ as the probability measure with the following density:
\begin{equation}
\label{def:rmsi}
\rmsi := \rm + s \fB + s  \left(\int_{\R^2} \fA\right) \frac{\chi^{(i)}}{\int_{\R^2} \chi^{(i)}}.
\end{equation}
In particular, in case $\int \fA =0$, this construction is not relevant and we can directly apply
Proposition \ref{prop:compar1} twice to compare ${\KNbeta(\mms)}$ to ${\KNbeta(\mm+s \fB)}$ and then ${\KNbeta(\mm+s \fB)}$ to ${\KNbeta(\mm)}$.
This is also the case if the scales $\ellA$ and $\ellB$ are comparable, which corresponds to the case where $n$ is independent of $N$.

Using our assumptions on $\mm, \fA, \fB, \chi^{(i)}$ and the fact that \eqref{condi:s2} and \eqref{condiell} imply that $|s|$ is of order at most $\ellA^2$ (the minimal scale in this problem), we obtain that on $\Dd$ the density $\rmsi$ satisfies:
\begin{equation}
\label{reg_msi}
\frac{1}{2} \leq \rmsi \leq 2, \quad |\rmsi|_\kk \preceq \Cc \ell_i^{-\kk} \text{ for } \kk = 1, 2, 3.
\end{equation}

We decompose the ratio of partition functions as :
\begin{equation*}
\frac{\KNbeta(\mms)}{\KNbeta(\mm)} = \frac{\KNbeta(\mms)}{\KNbeta(\mms^{(0)})} \times \prod_{i=0}^{n-1} \frac{\KNbeta(\mmsi)}{\KNbeta(\mmsip)} \times \frac{\KNbeta(\mms^{(n)})}{\KNbeta(\mm)}.
\end{equation*}
For each $i = 0, \dots, n-1$, in view of \eqref{def:rmsi} we may write:
\begin{equation*}
\rmsi = \rmsip + s \fii, \text{ with } \fii := s \left(\int_{\R^2} \fA\right) \left( \frac{\chi^{(i)}}{\int_{\R^2} \chi^{(i)}} - \frac{\chi^{(i+1)}}{\int_{\R^2} \chi^{(i+1)}} \right).
\end{equation*}
Since $\int \fA$ is of order $1$ (this follows from \eqref{condclt}), while $\int \chi^{(i)}$ is of order $\ell_i^2$, the perturbation $\fii$ satisfies:
\begin{equation}
\label{reg_fii}
|\fii|_{\kk} \preceq \ell_i^{-(\kk+2)} \text{ for } \kk = 0, 1, 2.
\end{equation}
In view of \eqref{reg_msi} and \eqref{reg_fii} we can apply Proposition \ref{prop:compar1} with the length scale chosen as $\ell_i$ and we obtain:
\begin{equation*}
\log \frac{\KNbeta(\mmsip)}{\KNbeta(\mmsi)}\\
= N \left(\frac{\beta}{4} - 1 \right) \left( \EE(\rmsip) - \EE(\rmsi) \right) +  O \left( s N^{3/4} \ell_i^{-1/2} \left(1 + \log\left(N \ell_i^2 \right) \right)^\hal\right),
\end{equation*}
The same reasoning applies for $\frac{\KNbeta(\mms)}{\KNbeta(\mm^{(0)})}$ (which corresponds to the smallest length scale $\ellA$) and for $\frac{\KNbeta(\mm_s^{(n)})}{\KNbeta(\mm)}$ (which corresponds to the largest length scale $\ell$ --  either equal to $\ellB$ or  of order $1$). Summing the contributions, we obtain:
\begin{equation*}
\log \frac{\KNbeta(\mms)}{\KNbeta(\mm)} = N \left(\frac{\beta}{4} - 1 \right) \left( \EE(\rms) - \EE(\rm) \right) + O\left( s \sum_{i=0}^n  N^{3/4} \ell_i^{-1/2} \left(1 + \log\left(N \ell_i^2 \right) \right)^\hal \right).
\end{equation*}
The sum over dyadic scales can be compared to an integral and we get:
\begin{equation*}
\sum_{i=0}^n  N^{3/4} \ell_i^{-1/2} \left(1 + \log\left(N \ell_i^2 \right) \right)^\hal = O \left( N^{3/4} \ellA^{-1/2} \left(1 + \log(N\ellA^2) \right)^\hal \right),
\end{equation*}
which yields \eqref{eq:compar2} as claimed.
\end{proof}

\subsection{Proof of Proposition \ref{prop:expg}}
\label{sec:proofexpo_hmu}

Let $\hmu$ denote the electrostatic potential generated by the uniform background $\mm_0$, namely:
\begin{equation*}
\hmu : z \mapsto \int_{\Dd} \log|z-x| \d \mm_0(x) = \int_{|x| \leq 1} \log|z-x| \frac{\d x}{\pi},
\end{equation*}
a quantity that already appeared in the definition \eqref{def:PotXN} of $\Pot$. An explicit computation gives the following expression
\begin{equation}
\label{def:hmu}
\hmu : z \mapsto \begin{cases}
\log |z| & \text{if } |z| \geq 1 \\
-\frac{1 - |z|^2}{2} & \text{ if } |z| \leq 1.
\end{cases}
\end{equation}
The potential $\hmu$ satisfies Poisson's equation $\Delta \hmu = 2\pi \mm_0$. In particular, $\Delta \hmu$ does \emph{not} have total mass $0$. Yet, using a simple trick that is totally unrelated to the methods of \cite{LebSerCLT,serfaty2020gaussian}, we are able to control the size of $\LN(\hmu)$ as expressed in the following proposition:
\begin{proposition}[Exponential moments of $\hmu$]
\label{prop:expo_hmu}
We have, for $|t| \leq \frac{N \beta}{100}$:
\begin{equation*}
\log \Esp \left[ \exp\left( t \LN(\hmu) \right) \right] = O(t + t^2),
\end{equation*}
with an implicit constant depending only on $\beta$.
\end{proposition}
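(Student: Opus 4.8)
The plan is to reduce the control of $\LN(\hmu)$ to two manageable pieces by exploiting the \emph{explicit} formulas for $\hmu$ and $\zeta$, and then to handle the ``quadratic'' piece by a scaling argument at the level of the partition function — the elementary trick alluded to in the text, which sidesteps the transport machinery of \cite{serfaty2020gaussian} entirely.

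First I would record the pointwise identity
\begin{equation*}
\hmu(x) = \tfrac12 |x|^2 - \tfrac12 - \zeta(x), \qquad x \in \R^2,
\end{equation*}
which is immediate from \eqref{def:hmu} and \eqref{def:zeta}. Since $\zeta$ vanishes on $\Dd = \supp \mm_0$ and $\LN$ annihilates constants, applying $\LN$ gives the exact decomposition $\LN(\hmu) = \tfrac12 \LN(|\cdot|^2) - \sum_{i=1}^N \zeta(x_i)$, where $\LN(|\cdot|^2) := \sum_{i=1}^N |x_i|^2 - N \int_\Dd |x|^2 \, \d\mm_0(x)$ is a genuine centered linear statistic and $\sum_i \zeta(x_i) \geq 0$. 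The two pieces are then treated separately.

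The heart of the matter is to bound the exponential moments of $\LN(|\cdot|^2)$. For this I would pass to the alternative writing \eqref{eq:Pnbetav2} of $\PNbeta$ (cf.\ Remark~\ref{rem-1.1}), for which $\mm_0$ is the equilibrium measure and which is covariant under the dilations $x_i \mapsto \lambda x_i$. Performing this change of variables in $\ZNbeta$ and choosing $\lambda$ so as to reabsorb the modified quadratic confinement turns $\Esp[\exp(s \sum_i |x_i|^2)]$ into a ratio of two copies of $\ZNbeta$ differing only by the Jacobian and the Vandermonde factor, giving the closed form $(1 - c\,s/(N\beta))^{-N - \beta N(N-1)/4}$ for an explicit $c>0$. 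Expanding its logarithm, the term linear in $s$ is exactly $N s \int_\Dd |x|^2 \,\d\mm_0$, so it cancels after centering, while the remainder is $O_\beta(s^2 + s)$ as long as $|s| \leq \tfrac{N\beta}{100}$; hence $\log \Esp[\exp(s\LN(|\cdot|^2))] = O_\beta(s^2 + s)$ in that range.

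It then remains to handle $\sum_i \zeta(x_i)$. For $t > 0$ this is free: $\zeta \geq 0$ gives $\LN(\hmu) \leq \tfrac12 \LN(|\cdot|^2)$, so the previous bound applies directly. For $t < 0$ I would write $e^{t\LN(\hmu)} = e^{(t/2)\LN(|\cdot|^2)} \, e^{|t|\sum_i \zeta(x_i)}$ and apply the Cauchy--Schwarz inequality, so the point reduces to an estimate $\log \Esp[\exp(c \sum_i \zeta(x_i))] = O_\beta(c)$ valid for $c$ up to a fixed fraction of $N\beta$. Here the confinement factor $e^{-2N\beta \sum_i \zeta(x_i)}$ in \eqref{eq:Pnbetav1} enters: bounding it crudely on $\{\sum_i \zeta(x_i) \geq u\}$ reduces the tail of $\sum_i \zeta(x_i)$ to the ratio of $\KNbeta$ to the partition function with the confinement halved, which is $e^{O_\beta(N)}$ since $\mm_0$ is still the equilibrium measure for that gas (an instance of the global law \eqref{global_law}), and integrating this tail bound against $c\,e^{cu}$ yields the claimed $O_\beta(c)$. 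Combining the two cases gives $\log \Esp[e^{t\LN(\hmu)}] = O_\beta(t^2 + |t|)$ for $|t| \leq \tfrac{N\beta}{100}$. I expect this last step — producing for $\sum_i \zeta(x_i)$ an exponential-moment bound that is $O_\beta(c)$ rather than the trivial $O_\beta(N)$, and valid up to $c \asymp N\beta$ — to be the main technical obstacle, since it rests on the $N$-uniformity of the free energy as the confinement is rescaled near its original value; an alternative would be to first restrict to a high-probability event forcing all points into a neighbourhood of $\Dd$, but the neighbourhood must be taken so small that the direct tail estimate on $\sum_i \zeta(x_i)$ is in fact the cleaner route.
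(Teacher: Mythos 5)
Your proof is correct and, for its main skeleton, coincides with the paper's: the pointwise identity $\hmu(x)=\tfrac12|x|^2-\tfrac12-\zeta(x)$ and the scaling evaluation of $\Esp\bigl[e^{s\sum_i|x_i|^2}\bigr]$ via \eqref{eq:Pnbetav2} are exactly Claim~\ref{claim_scalin} (your closed form is \eqref{scaling_xi}), and the cancellation of the order-$N$ linear term against the centering $N\int|x|^2\,\d\mm_0$ is the same computation as \eqref{esp_presque_hmu}; as in the paper, that exact matching of leading linear coefficients is what the consistency of the normalizations \eqref{eq:Pnbetav1}/\eqref{eq:Pnbetav2} guarantees, so be careful to use the version of the confining potential that is genuinely equivalent to \eqref{eq:Pnbetav1}. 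Where you genuinely diverge is the treatment of $\sum_i\zeta(x_i)$. The paper's Claim~\ref{claim_zeta} bounds $\log\Esp\bigl[e^{c\sum_i\zeta(x_i)}\bigr]$ for $c$ up to $\beta N$ by invoking the free-energy expansion of \cite{LebSerCLT}, which is independent of $\zeta$ up to $o(N)$, followed by H\"older; you instead absorb half of the confinement factor $e^{-2\beta N\sum_i\zeta(x_i)}$ on the event $\{\sum_i\zeta(x_i)\geq u\}$ to get $\P\bigl[\sum_i\zeta(x_i)\geq u\bigr]\leq e^{-\beta N u}\,\KNbeta(\mm_0,\zeta/2)/\KNbeta(\mm_0,\zeta)$, control the ratio at precision $e^{O_\beta(N)}$, and integrate the tail. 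This works, and it buys something: it needs only $O(N)$ accuracy on the free energy (the paper needs the finer $o(N)$, $\zeta$-independent expansion only because it states Claim~\ref{claim_zeta} with an $o(t)$ error, which is more than Proposition~\ref{prop:expo_hmu} requires); the one point you should make explicit is that \eqref{global_law} is stated for the fixed $\zeta$ of \eqref{def:KNbetamu}, so you need its (standard) analogue with the halved confinement, available from \cite[Thm.~1]{sandier20152d} since integrability is preserved. Your remark that for $t>0$ the term $-t\sum_i\zeta(x_i)\leq 0$ can simply be dropped is a small simplification over the paper's Cauchy--Schwarz step. Finally, like the paper you only produce upper bounds on $\log\Esp\bigl[e^{t\LN(\hmu)}\bigr]$; the matching lower bound needed for the two-sided statement follows at once from $\Esp\bigl[e^{tX}\bigr]\Esp\bigl[e^{-tX}\bigr]\geq 1$ together with the upper bound at $-t$.
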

{Proposition \ref{prop:expo_hmu}} gives us the crucial freedom to
substract a multiple of $\hmu$ from a given test function and to cancel out the masses of their Laplacians, while making a (typically) bounded error on the size of the linear statistics. We postpone the proof of Proposition \ref{prop:expo_hmu} for now and deduce from it a generalization of Proposition \ref{prop:expg}.

\begin{corollary} \label{cor:expo_hmu}
Let $\chi$ be a $C^2$, radially symmetric function which is compactly supported in $\Dd_{r}$ for some $r< 1$, such that $|\chi|_{\kk} \leq \Cc$ for $\kk = 0, 1, 2$.
Let $\g$ be a solution of Poisson's equation $\Delta \g =  2\pi\chi$. Then, one has for $|t| \ll N$,
\[
\log\Esp [ e^{t \LN(\g)}]  = O(t + t^2)
\]
where the implied constant depends only on $\beta$.
\end{corollary}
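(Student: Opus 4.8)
The plan is to reduce the claim to Proposition~\ref{prop:expo_hmu} by peeling off from $\g$ exactly the part that carries the mass of $\Delta\g$. Let $m:=\int_{\R^2}\chi$; since $\chi$ is supported in $\Dd_{r}$ with $|\chi|_{\0}\le\Cc$ we have $|m|\le\pi\Cc$. Recall $\hmu$ from~\eqref{def:hmu}, so $\Delta\hmu=2\pi\rm_0$, and set $\varphi:=\g-m\hmu$. Then $\Delta\varphi=2\pi(\chi-m\rm_0)$ is supported on $\Dd$ with $\int_{\R^2}\Delta\varphi=0$, and $\varphi$ is bounded on $\R^2$ with $|\varphi|_{\0},|\varphi|_{\1},|\varphi|_{\2}=O(1)$ (depending only on $\Cc,r$): indeed $\varphi$ is smooth away from $\partial\Dd$, and across $\partial\Dd$ it is only of class $C^{1,1}$ — not $C^2$ — because of the jump of $\Delta\hmu$ there. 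By linearity of $\LN$ we have $\LN(\g)=\LN(\varphi)+m\LN(\hmu)$, so by Cauchy--Schwarz $\Esp[e^{t\LN(\g)}]\le\Esp[e^{2t\LN(\varphi)}]^{1/2}\Esp[e^{2tm\LN(\hmu)}]^{1/2}$. Since $|m|\le\pi\Cc$, for $|t|\ll N$ and $N$ large we have $|2tm|\le N\beta/100$, so Proposition~\ref{prop:expo_hmu} gives $\log\Esp[e^{2tm\LN(\hmu)}]=O(tm+t^2m^2)=O(t+t^2)$. It remains to prove $\log\Esp[e^{2t\LN(\varphi)}]=O(t+t^2)$.

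I would obtain this from Lemma~\ref{lem:rewrite_Laplace} applied to $\varphi$; the elementary manipulations behind that lemma only use that $\varphi$ has bounded second derivatives and compactly supported, mean-zero Laplacian, so they apply to $\varphi\in C^{1,1}$ (alternatively, replace $\varphi$ by a smooth approximation with the same bounds and pass to the limit). With $s:=-t/(\pi N\beta)=O(t/N)$ and $\rms:=\rm_0+s\Delta\varphi=\rm_0+2\pi s(\chi-m\rm_0)$, and using $|2t|\le\beta N|\varphi|_{\2}^{-1}$ for $|t|\ll N$, this gives
\[
\Esp\left[e^{2t\LN(\varphi)}\right]=\exp\left(\tfrac{t^2}{\pi\beta}\int_{\R^2}-\varphi\,\Delta\varphi\right)\frac{\KNbeta(\mms)}{\KNbeta(\mm_0)},
\]
and $\int_{\R^2}-\varphi\,\Delta\varphi$ is a finite constant depending only on $\Cc,r$, so the prefactor is $\exp(O(t^2))$.

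For the ratio, note that $\rms$ is a radial probability density which for $|s|$ small (hence for $|t|\ll N$) lies between $\hal$ and $2$ on $\Dd$ and differs from $\rm_0$ only by the radial, $C^2$, uniformly bounded perturbation $2\pi s(\chi-m\rm_0)$ supported on $\Dd$. Running the radial-rearrangement argument in the proof of Lemma~\ref{lem:compar_nice_bound} verbatim — its only inputs are radial symmetry and $C^2$ bounds on the perturbation, which hold uniformly in $m$ since $|m|\le\pi\Cc$ — gives $\log\KNbeta(\mms)/\KNbeta(\mm_0)=s\,O(N)=O(t)$ for $|s|\le1/\Cc'$, hence for $|t|\ll N$. (Equivalently, when $m\ne0$ one invokes Lemma~\ref{lem:compar_nice_bound} directly after writing $\chi-m\rm_0=m(\tfrac1m\chi-\rm_0)$, $\int\tfrac1m\chi=1$, and rescaling $s\mapsto sm$; the case $m=0$ is identical.) Combining, $\log\Esp[e^{2t\LN(\varphi)}]=O(t+t^2)$, and with Cauchy--Schwarz, $\log\Esp[e^{t\LN(\g)}]\le O(|t|+t^2)$. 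The same with $-t$ for $t$ gives the analogous upper bound on $\log\Esp[e^{-t\LN(\g)}]$; letting $t\to0$ in both yields $|\Esp\LN(\g)|=O(1)$, and then Jensen's inequality $\log\Esp[e^{t\LN(\g)}]\ge t\,\Esp\LN(\g)\ge-O(|t|)$ supplies the matching lower bound, giving the two-sided estimate of Corollary~\ref{cor:expo_hmu}.

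The conceptual point is the splitting $\g=\varphi+m\hmu$: it routes the non-zero mass of $\Delta\g$ through $\hmu$, where Proposition~\ref{prop:expo_hmu} is available, leaving a mean-zero, macroscopically-supported perturbation for the partition-function machinery. The step that genuinely matters is that the full range $|t|\ll N$ forces the use of the macroscopic comparison Lemma~\ref{lem:compar_nice_bound} (valid for $|s|\lesssim1$): the sharper Proposition~\ref{prop:compar1} at a macroscopic length scale only permits $|s|\lesssim N^{-1/4}$, i.e.\ $|t|\ll N^{3/4}$, which is insufficient. The minor nuisance is the failure of $C^2$ regularity of $\varphi=\g-m\hmu$ across $\partial\Dd$ — harmless for Lemma~\ref{lem:rewrite_Laplace}, but the reason one cannot replace $m\hmu$ by a smooth substitute.
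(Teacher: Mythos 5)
Your proof is correct and follows essentially the same route as the paper: you split $\g = \varphi + (\int\chi)\,\hmu$ with $\varphi := \g - (\int\chi)\hmu$, control $\LN(\varphi)$ via Lemma~\ref{lem:rewrite_Laplace} together with the macroscopic comparison of Lemma~\ref{lem:compar_nice_bound}, invoke Proposition~\ref{prop:expo_hmu} for the $\hmu$ part, and combine by H\"older/Cauchy--Schwarz (plus Jensen for the matching lower bound). Your extra care about the $C^{1,1}$ regularity of $\varphi$ across $\partial\Dd$, the rescaling when $\int\chi\neq 1$, and why Proposition~\ref{prop:compar1} would not cover the full range $|t|\ll N$ are details the paper leaves implicit, but they do not change the argument.
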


\begin{proof}
Let $c=\int \chi$ which is of order 1 (by assumptions) and let $\varphi = \g - c\hmu$.
By Lemmas~\ref{lem:rewrite_Laplace} and~\ref{lem:compar_nice_bound}, one has for $|t| \ll N$,
\[
\log\Esp [ e^{t \LN(\varphi)}]  = O(t + t^2) .
\]
This estimate is comparable to that of Lemma~\ref{prop:expo_hmu}, thus the claim follows directly from H\"older's inequality.
\end{proof}

\begin{proof}[Proof of Proposition \ref{prop:expo_hmu}]
This relies on the following two claims:
\begin{claim}
\label{claim_zeta}
For $|t| \leq \beta N$, we have:
\begin{equation}
\label{eq:zeta_does_not_matter}
\log \Esp\left[ e^{t  \sum_{i=1}^N \zeta(x_i) } \right] =  o(t).
\end{equation}
\end{claim}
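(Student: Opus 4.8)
The plan is to reduce the statement, by ``tilting the strength of the confinement'', to a single uniform estimate on a one‑parameter family of Gibbs measures. For $u>0$ set
\begin{equation*}
Z_u:=\int_{(\R^2)^N}\exp\!\Big(-\beta\big(\F(\XN,\mm_0)+uN{\textstyle\sum_{i=1}^N}\zeta(x_i)\big)\Big)\d\XN ,
\end{equation*}
which is readily checked to be finite (the coercivity $\zeta(x)\asymp|x|^2$ at infinity dominates the at most logarithmic growth of $\F$ when a point escapes), and let $\mathbb{Q}_u$ be the probability measure with density $Z_u^{-1}\exp(-\beta(\F(\XN,\mm_0)+uN\sum_i\zeta(x_i)))$, so that $Z_2=\KNbeta$ and $\mathbb{Q}_2=\PNbeta$. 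For $|t|\le\beta N$, writing $u(t):=2-\tfrac{t}{\beta N}\in[1,3]$, a direct computation gives $\Esp_{\PNbeta}[e^{t\sum_i\zeta(x_i)}]=Z_{u(t)}/Z_2$ and $\partial_u\log Z_u=-\beta N\,\Esp_{\mathbb{Q}_u}[\sum_i\zeta(x_i)]$, whence
\begin{equation*}
\log\Esp_{\PNbeta}\!\Big[e^{t\sum_{i=1}^N\zeta(x_i)}\Big]=\log Z_{u(t)}-\log Z_2=-\beta N\int_2^{u(t)}\Esp_{\mathbb{Q}_u}\!\Big[{\textstyle\sum_{i=1}^N}\zeta(x_i)\Big]\d u ,
\end{equation*}
so that $\big|\log\Esp_{\PNbeta}[e^{t\sum_i\zeta(x_i)}]\big|\le|t|\,\sup_{u\in[1,3]}\Esp_{\mathbb{Q}_u}[\sum_{i=1}^N\zeta(x_i)]$. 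The point of this manoeuvre is that differentiation in $u$ cancels the large (and only crudely known) terms $\tfrac\beta4N\log N+O(N)$ of $\log Z_u$ (cf.\ \eqref{global_law}) and leaves exactly the quantity one wants to be small.

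Thus it remains to prove $\sup_{u\in[1,3]}\Esp_{\mathbb{Q}_u}[\sum_{i=1}^N\zeta(x_i)]=o(1)$. For every $u>0$ the measure $\mathbb{Q}_u$ is the canonical Gibbs measure of a two‑dimensional Coulomb gas whose equilibrium measure is $\mm_0$: the associated mean‑field functional $\mm\mapsto\hal\iint-\log|x-y|\,\d(\mm-\mm_0)(x)\,\d(\mm-\mm_0)(y)+u\int\zeta\,\d\mm$ is uniquely minimized over probability measures by $\mm=\mm_0$ (by positive‑definiteness of $-\log$ on signed measures of total mass $0$, together with $\zeta\ge0$ and $\{\zeta=0\}=\Dd$), equivalently the effective potential attached to $\mathbb{Q}_u$ and $\mm_0$ is $u\zeta$, which vanishes on $\supp\mm_0=\Dd$ and is positive outside, uniformly for $u\ge1$. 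Since the family $\{\mathbb{Q}_u\}_{u\in[1,3]}$ is therefore uniformly coercive, the standard quantitative confinement estimates for such gases (see e.g.\ \cite{sandier20152d}, \cite{rougerie2016higher}, \cite{armstrong2019local}) hold with constants not depending on $u\in[1,3]$: there are $C_\beta,c_\beta,K_\beta>0$ such that, for all $u\in[1,3]$, the one‑point intensity $\varrho_u$ of $\mathbb{Q}_u$ satisfies $\varrho_u\le C_\beta$ on $\R^2$ and $\varrho_u(x)\le C_\beta\exp(-c_\beta N\zeta(x))$ whenever $\dist(x,\Dd)\ge K_\beta\sqrt{(\log N)/N}$.

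Granting this, by exchangeability $\Esp_{\mathbb{Q}_u}[\sum_i\zeta(x_i)]=N\int_{\R^2\setminus\Dd}\zeta(x)\varrho_u(x)\,\d x$, and we split the integral at $\dist(x,\Dd)=K_\beta\sqrt{(\log N)/N}$. On the inner annulus, which has area $O(\sqrt{(\log N)/N})$ and on which $\zeta(x)=O((\log N)/N)$, the bound $\varrho_u\le C_\beta$ gives a contribution $O\big(N\cdot\tfrac{\log N}{N}\cdot\sqrt{(\log N)/N}\big)=O\big((\log N)^{3/2}N^{-1/2}\big)$. On the outer region, using $\varrho_u(x)\le C_\beta e^{-c_\beta N\zeta(x)}$ together with $\zeta(x)\asymp\dist(x,\Dd)^2$ near $\partial\Dd$ (and $\zeta(x)\asymp|x|^2$ for $|x|$ large), a routine Gaussian computation bounds the contribution by $O(N^{-1/2})$ once $K_\beta$ is chosen so that $c_\beta K_\beta^2\ge1$. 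Both bounds are uniform in $u\in[1,3]$ and tend to $0$, which completes the proof.

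The genuine work is the confinement input of the previous paragraph: one needs both the a priori bound $\varrho_u\lesssim1$ and the Gaussian decay $\varrho_u(x)\lesssim e^{-c_\beta N\zeta(x)}$ at distance $\gtrsim\sqrt{(\log N)/N}$ from $\Dd$, \emph{with constants uniform over the one‑parameter family of confinements}. These estimates are classical for a single fixed coercive potential; the uniformity over $u\in[1,3]$ is not a real obstruction, since these potentials share the equilibrium measure $\mm_0$ and are uniformly coercive, but it does require invoking the cited results in a form that makes this transparent, and that is the only delicate point.
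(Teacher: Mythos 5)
Your reduction via the tilted measures $\mathbb{Q}_u$ is correct and is in fact the same tilting identity the paper exploits (the paper takes $t=\pm\beta N$, identifies $\Esp[e^{t\sum\zeta}]$ as a ratio of partition functions with confinement $\zeta\mp\tfrac12\zeta$, and interpolates by H\"older). The genuine gap is in the input you then invoke to show $\sup_{u\in[1,3]}\Esp_{\mathbb{Q}_u}[\sum_i\zeta(x_i)]=o(1)$: the pointwise intensity bound $\varrho_u\preceq 1$ on all of $\R^2$ and, especially, the Gaussian decay $\varrho_u(x)\preceq e^{-c_\beta N\zeta(x)}$ at distance $\gtrsim\sqrt{(\log N)/N}$ outside the droplet are \emph{not} contained in the references you cite, and they are not minor uniformity issues. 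The local laws of \cite{armstrong2019local} are interior statements: condition \eqref{condiell} requires the disk to stay at distance $\geq \Cc_\beta N^{-1/4}$ from $\partial\Dd$, so they say nothing about the annulus just outside $\Dd$ — which is exactly where $\zeta\neq 0$ and where both halves of your splitting take place. The confinement estimate available from \cite{sandier20152d} (the one used in Section \ref{sec:approximation}) only controls outliers at macroscopic distance, with decay $e^{-cNM^2}$, and \cite{rougerie2016higher} does not supply an edge-scale density bound either. Edge localization at scale $\sqrt{(\log N)/N}$ with the tail $e^{-c_\beta N\zeta}$ is, for general $\beta$, a substantive theorem in its own right (classical only for $\beta=2$ via the determinantal structure), so asserting it as ``standard, uniform in $u$'' is where your proof stops being a proof.

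Two remarks. First, your bound is much stronger than needed: you aim at $\Esp_{\mathbb{Q}_u}[\sum\zeta]=O(N^{-1/2}(\log N)^{3/2})$ for every $u$, whereas $o(t)$ only requires an averaged statement. Indeed, since $u\mapsto g(u):=\Esp_{\mathbb{Q}_u}[\sum_i\zeta(x_i)]$ is nonincreasing ($g'(u)=-\beta N\,\mathrm{Var}_{\mathbb{Q}_u}(\sum_i\zeta(x_i))\le 0$), your fundamental-theorem identity shows that \eqref{eq:zeta_does_not_matter} for all $|t|\le\beta N$ follows from the single estimate $\log Z_1-\log Z_2=o(N)$, i.e.\ from the statement that halving the confinement changes the free energy by $o(N)$. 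Second, that is precisely the input the paper uses: by \cite[Corollary 1.1 and Remark 4.3]{LebSerCLT}, the expansion of $\log\KNbeta(\mm_0,\zeta)$ up to order $o(N)$ does not depend on the (nonnegative, coercive) confinement $\zeta$ vanishing on $\Dd$, which gives the $o(N)$ comparison directly and avoids any density estimate near or outside the edge. If you replace your unproven edge bounds by this free-energy comparison (or prove them from scratch, which is a different and harder project), your argument closes.
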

\begin{proof}
This follows from the analysis of \cite{LebSerCLT}. We return to the notation of \eqref{def:KNbeta}, \eqref{def:KNbetamu} for partition functions and make them more explicit
by writing:
\begin{equation*}
\KNbeta(\mm_0, \zeta) := \int_{\left(\R^2\right)^N} \exp\left( - \beta \left( \F(\XN, \mm_0) + 2N \sum_{i=1}^N \zeta(x_i) \right) \right) \d\XN,
\end{equation*}
where we explicitly keep track of both the background measure (as in \eqref{def:KNbetamu}) \emph{and} the ``effective confining potential'' $\zeta$. By \cite[Corollary 1.1]{LebSerCLT} or \cite[(4.12)]{LebSerCLT} one finds an expansion for $\log \KNbeta(\mm_0, \zeta)$ up to order $o(N)$ which \emph{does not depend on $\zeta$} (see \cite[Remark 4.3]{LebSerCLT}). Thus in particular, taking $t = \pm \beta N$ in the left-hand side of \eqref{eq:zeta_does_not_matter} we get:
\begin{equation*}
\log \Esp\left[ e^{ \pm \beta N  \sum_{i=1}^N \zeta(x_i) } \right] = \log \KNbeta(\mm_0, \zeta \mp \frac{1}{2} \zeta) - \log \KNbeta(\mm_0, \zeta) = o(N).
\end{equation*}
The claim follows from H\"{o}lder's inequality.
\end{proof}

\begin{claim}
\label{claim_scalin}
For $|t| \leq \frac{N \beta}{2}$, we have:
\begin{equation}
\label{scaling_xi}
\Esp\left[ e^{t \sum_{i=1}^N |x_i|^2 } \right] = \exp\left(-\log\left( 1 - \frac{t}{\beta N} \right) \left(\frac{\beta N(N-1)}{4} + N \right)  \right).
\end{equation}
\end{claim}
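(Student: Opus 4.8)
The plan is to reduce \eqref{scaling_xi} to a one-line scaling identity using the representation \eqref{eq:Pnbetav2} of $\PNbeta$ in terms of $\HN$ (recall from Remark~\ref{rem-1.1} that \eqref{eq:Pnbetav1} and \eqref{eq:Pnbetav2} define the same measure). Write the left-hand side of \eqref{scaling_xi} as the ratio
\begin{equation*}
\Esp\left[ e^{t \sum_{i=1}^N |x_i|^2 } \right] = \frac{1}{\ZNbeta} \int_{(\R^2)^N} \exp\left(-\beta\, \HN(\XN) + t \sum_{i=1}^N |x_i|^2 \right) \d\XN,
\end{equation*}
and absorb the extra term into the harmonic confinement: since $-\beta N |x|^2 + t|x|^2 = -\beta N \lambda |x|^2$ with $\lambda := 1 - \frac{t}{\beta N}$, the numerator integrand equals $\exp\bigl(-\tfrac{\beta}{2}\sum_{i \neq j}(-\log|x_i-x_j|) - \beta N \lambda \sum_i |x_i|^2\bigr)$. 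The constraint $|t| \leq \frac{N\beta}{2}$ guarantees $\lambda \in [\tfrac12, \tfrac32]$, in particular $\lambda > 0$, so this is still an integrable density and the substitution below is legitimate.

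Next I would perform the change of variables $x_i = \lambda^{-1/2} y_i$ for $i = 1, \dots, N$. Three effects appear simultaneously. First, the Lebesgue measure contributes $\d\XN = \lambda^{-N}\, \d\YN$ (a factor $\lambda^{-1}$ per particle since each $x_i \in \R^2$). Second, $N\lambda \sum_i |x_i|^2 = N\lambda \cdot \lambda^{-1}\sum_i |y_i|^2 = N \sum_i |y_i|^2$, restoring the original confinement. Third, each pairwise term satisfies $-\log|x_i-x_j| = \tfrac12 \log\lambda - \log|y_i - y_j|$, so, summing over the $N(N-1)$ ordered pairs, $\tfrac{\beta}{2}\sum_{i\neq j}(-\log|x_i-x_j|) = \tfrac{\beta N(N-1)}{4}\log\lambda + \tfrac{\beta}{2}\sum_{i\neq j}(-\log|y_i-y_j|)$. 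Collecting the $\log\lambda$ contributions from the Jacobian and from the interaction, the numerator equals
\begin{equation*}
\lambda^{-N}\, \lambda^{-\beta N(N-1)/4} \int_{(\R^2)^N} \exp\left(-\beta\, \HN(\YN)\right)\d\YN = \lambda^{-N - \beta N(N-1)/4}\, \ZNbeta .
\end{equation*}
The partition functions cancel, leaving $\lambda^{-N - \beta N(N-1)/4} = \exp\bigl(-\log(1 - \tfrac{t}{\beta N})\,(\tfrac{\beta N(N-1)}{4} + N)\bigr)$, which is exactly \eqref{scaling_xi}.

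There is no serious obstacle here; the argument is genuinely just the scale-covariance of the logarithmic kernel combined with the quadratic confinement. The only points demanding a little care are (i) verifying that $\lambda > 0$ on the whole admissible range of $t$, so that $x_i \mapsto \lambda^{-1/2}y_i$ is a bona fide real diffeomorphism and all integrals converge, and (ii) the bookkeeping of the exponent of $\lambda$: one factor $\lambda^{-N}$ from the $N$-fold planar Jacobian (producing the ``$+N$'' in \eqref{scaling_xi}) and one factor $\lambda^{-\beta N(N-1)/4}$ from the $N(N-1)$ ordered pairs in the interaction (producing the ``$\tfrac{\beta N(N-1)}{4}$''). It is worth emphasizing that, unlike the estimates elsewhere in the paper, \eqref{scaling_xi} is an \emph{exact} identity with no error term, which is precisely what makes it a clean input to be combined with Claim~\ref{claim_zeta} in the proof of Proposition~\ref{prop:expo_hmu}.
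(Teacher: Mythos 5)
Your proof is correct and is exactly the scaling argument the paper intends: its own proof of Claim~\ref{claim_scalin} simply says ``it follows by a scaling argument using the equivalent expression \eqref{eq:Pnbetav2}'', which is precisely the substitution $x_i=\lambda^{-1/2}y_i$ with $\lambda=1-\tfrac{t}{\beta N}$ that you carry out in detail. The bookkeeping of the Jacobian factor $\lambda^{-N}$ and the interaction factor $\lambda^{-\beta N(N-1)/4}$, as well as the positivity of $\lambda$ on the stated range of $t$, is all as required.
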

\begin{proof}
It follows by a scaling argument using the equivalent expression \eqref{eq:Pnbetav2} for the joint law of the particles.
\end{proof}

We may now prove Proposition \ref{prop:expo_hmu}. On the one hand, using the expression \eqref{def:hmu} of $\hmu$ and an elementary computation we get:
\begin{equation*}
\int \hmu(x)  \d \mm_0(x) = \int \frac{|x|^2 - 1}{2} \d \mm_0(x) = - \frac{1}{4}.
\end{equation*}
On the other hand, a Taylor's expansion of \eqref{scaling_xi} gives (for $\frac{|t|}{\beta N}$ small):
\begin{equation*}
\Esp\left[ e^{t \sum_{i=1}^N |x_i|^2} \right] = \exp\left( \frac{t N}{4} + O(t + t^2) \right),
\end{equation*}
and we deduce that:
\begin{multline}
\label{esp_presque_hmu}
\Esp\left[ e^{ t \left( \sum_{i=1}^N \frac{|x_i|^2 - 1}{2} \right) - t N \int \hmu(x)  \d \mm_0(x) } \right] = \Esp\left[ e^{ \frac{t}{2}  \sum_{i=1}^N |x_i|^2  - \frac{tN}{2} + \frac{tN}{4} } \right] \\
= \Esp\left[ e^{ \hal \left( \frac{t N}{4} + O(t + t^2) \right) - \frac{t N}{8} } \right]  = \Esp\left[ e^{ O(t + t^2) } \right].
\end{multline}
Since the expression of $\hmu(x)$ is not always $\frac{|x|^2 - 1}{2}$ but rather $\frac{|x|^2 - 1}{2} - \zeta(x)$ in general (compare \eqref{def:hmu} with \eqref{def:zeta}), we can write:
\begin{equation*}
\Esp\left[ e^{ t \LN(\hmu)} \right] = \Esp\left[ e^{ t \left( \sum_{i=1}^N \frac{|x_i|^2 - 1}{2} \right) - t N \int \hmu(x)  \d \mm_0(x) - t \sum_{i=1}^N \zeta(x_i)} \right].
\end{equation*}
Using Cauchy-Schwarz's inequality combined with \eqref{eq:zeta_does_not_matter} and \eqref{esp_presque_hmu} we prove Proposition \ref{prop:expo_hmu}.
\end{proof}

\bibliographystyle{alpha}
\bibliography{LLNMaxPotbib}

\end{document}